\documentclass{amsart}
\usepackage{pinlabel}
\usepackage{amsmath, amsthm, amssymb, amscd, mathrsfs, eucal, epsfig,color}
\usepackage{hyperref}
\usepackage{wrapfig}

\input xy
\xyoption{all}

\begin{document}

\newtheorem{theorem}{Theorem}[section]
\newtheorem*{homotopy_theorem}{Homotopy Theorem}
\newtheorem*{regular_value_theorem}{Regular Value Theorem}
\newtheorem{lemma}[theorem]{Lemma}
\newtheorem{corollary}[theorem]{Corollary}
\newtheorem{conjecture}[theorem]{Conjecture}
\newtheorem{proposition}[theorem]{Proposition}
\newtheorem{question}[theorem]{Question}
\newtheorem*{answer}{Answer}
\newtheorem{problem}[theorem]{Problem}
\newtheorem*{main_theorem}{Main Theorem}
\newtheorem*{claim}{Claim}
\newtheorem*{criterion}{Criterion}
\theoremstyle{definition}
\newtheorem{definition}[theorem]{Definition}
\newtheorem{construction}[theorem]{Construction}
\newtheorem{notation}[theorem]{Notation}
\newtheorem{convention}[theorem]{Convention}
\newtheorem*{warning}{Warning}
\newtheorem*{assumption}{Simplifying Assumptions}

\theoremstyle{remark}
\newtheorem{remark}[theorem]{Remark}
\newtheorem*{apology}{Apology}
\newtheorem{historical_remark}[theorem]{Historical Remark}
\newtheorem{example}[theorem]{Example}
\newtheorem{scholium}[theorem]{Scholium}
\newtheorem*{case}{Case}

\def\id{\text{id}}
\def\H{\mathbb H}
\def\Z{\mathbb Z}
\def\N{\mathbb N}
\def\R{\mathbb R}
\def\C{\mathbb C}
\def\CP{{\mathbb{CP}}}
\def\F{\mathbb F}
\def\FF{\mathcal{F}}
\def\P{\mathbb P}
\def\Q{\mathbb Q}
\def\M{{\mathcal M}}
\def\L{{\mathcal L}}
\def\Teich{{\mathcal T}}
\def\Mod{\text{Mod}}
\def\Cantor{\text{Cantor set}}
\def\erf{\text{erf}}
\def\CAT{\text{CAT}}
\def\SS{{\mathcal S}}
\def\BS{{\mathcal{BS}}}
\def\FS{{\mathcal{FS}}}
\def\GG{{\mathcal{G}}}
\def\FF{{\mathcal{F}}}
\def\EL{{\mathcal{EL}}}
\def\DL{{\mathcal{DL}}}
\def\Aut{\text{Aut}}
\def\Art{\text{Art}}
\def\Mon{\text{Mon}}
\def\CFS{\widehat{\mathcal{FS}}}
\def\CS{{\mathcal{CS}}}
\def\UCS{{\mathcal{UCS}}}
\def\E{{\mathbb E}}
\def\EE{{\mathcal E}}
\def\D{{\mathbb D}}
\def\roo{\text{root}}
\def\deg{\text{deg}}

\def\length{\textnormal{length}}

\newcommand{\marginal}[1]{\marginpar{\tiny #1}}

\title{Sausages and Butcher Paper}
\author{Danny Calegari}
\address{Department of Mathematics \\ University of Chicago \\
Chicago, Illinois, 60637}
\email{dannyc@math.uchicago.edu}
\date{\today}

\begin{abstract}
For each $d>1$ the {\em shift locus of degree $d$}, denoted $\SS_d$, 
is the space of normalized degree $d$ polynomials in one complex
variable for which every critical point is in the attracting basin of infinity
under iteration. It is a complex analytic manifold of complex dimension $d-1$.

We are able to give an explicit description of $\SS_d$ as a complex of spaces
over a contractible $\tilde{A}_{d-2}$ building, and to describe the pieces in
two quite different ways:
\begin{enumerate}
\item{(combinatorial): in terms of dynamical extended laminations; or}
\item{(algebraic): in terms of certain explicit `discriminant-like' 
affine algebraic varieties.}
\end{enumerate}

From this structure one may deduce numerous facts, including that $\SS_d$ has
the homotopy type of a CW complex of real dimension $d-1$; and that
$\SS_3$ and $\SS_4$ are $K(\pi,1)$s.

The method of proof is rather interesting in its own right. In fact, along the
way we discover a new class of complex surfaces (they are complements of 
certain singular curves in $\C^2$) which are homotopic to locally $\CAT(0)$ 
complexes; in particular they are $K(\pi,1)$s.
\end{abstract}

\maketitle

\setcounter{tocdepth}{1}
\tableofcontents

\section{Introduction}

For each $d>1$ the {\em shift locus of degree $d$}, denoted $\SS_d$, is the space
of normalized (i.e.\/ monic with roots summing to zero) 
degree $d$ polynomials in one complex variable for which every
critical point is in the attracting basin of infinity under iteration. A polynomial in
$\SS_d$ is called a {\em shift polynomial}. These are the polynomials whose
dynamics are the easiest to understand; perhaps in compensation, their parameter
spaces appear to be extremely complicated. Much is known about
the geometry and topology of $\SS_d$ and much is still mysterious.

The main point of this paper is to describe a canonical decomposition of $\SS_d$
(and some equivalent spaces) into pieces, giving $\SS_d$ the explicit structure of 
a `complex of spaces' over a rather nice space (a contractible $\tilde{A}_{d-2}$ 
building) and to give two, quite different, descriptions of the pieces.

One description is combinatorial, in terms of certain iterated fiber bundles
resp. their orbifolded quotients that we call {\em monkey prisms} resp. 
{\em monkey turnovers}. In this
description, the fibers and their monodromy are encoded quite explicitly in
objects called {\em dynamical elaminations}; the word `elamination' here is shorthand
for `extended lamination' --- a lamination with `extra' structure.
Elaminations are related to the
sorts of laminations used elsewhere in holomorphic dynamics (see e.g.\/ 
\cite{Thurston_dynamics}) but are in some ways quite different. Their definitions
and basic properties are given in \S~\ref{section:elaminations}, and they are
a key tool throughout the remainder of the paper.

The other description is algebraic, in terms of certain complex affine varieties,
which arise as moduli spaces of maps between infinite nodal genus 0 surfaces called
{\em sausages}. The relationship between sausages and shift polynomials is 
of an essentially topological nature, so that although both objects and their
moduli spaces carry natural complex analytic structures, the maps between
them do {\em not} respect this structure. This seems to be unavoidable: the shift
locus is a highly transcendental object, whereas the moduli spaces we construct
are algebraic. 

One interesting consequence of this relationship between these two ways of seeing 
the shift locus is that information which is obscure on one side can become 
transparent on the other. Here is an example. In degree 3, the Shift Locus can
be described (up to homotopy) as a space obtained from the 3-sphere by drilling
out a trefoil knot, and gluing in a bundle over $S^1$ whose fiber is a 
disk minus a Cantor set. This Cantor set can be thought of as an infinite nested
intersection $K= \cap E_n$ of subsets of the disk, where each $E_n$ 
is itself a finite union of disks. The monodromy permutes each $E_n$, and 
it is a fact (Theorem~\ref{theorem:powers_of_2}) that the orbits of this
permutation are cycles whose lengths are powers of 2. The only proof of this
that I know is to interpret the permutation action of the monodromy as 
an action on the roots of a certain polynomial obtained by
iterated quadratic extensions. 

\vskip 12pt

The value of mathematical machinery is that it can prove theorems whose 
statement does not mention the machinery. As a consequence of our structure 
theorems we are able to deduce some facts about the topology of the shift locus,
especially in low degrees. In particular:

\begin{homotopy_theorem}
$\SS_d$ has the homotopy type of a $(d-1)$-complex (i.e.\/ a complex of half the
real dimension of $\SS_d$ as a manifold). For $d=3$ or $4$ it is a $K(\pi,1)$.
For $d=3$ it is homotopic to a $\CAT(0)$ 2-complex.
\end{homotopy_theorem}

This is an amalgamation of Theorems~\ref{theorem:S_3_CAT0_2complex},
\ref{theorem:S_4_K_pi_1} and \ref{theorem:degree_d_homotopy_dimension}. In
fact, it is plausible that $\SS_d$ is a $K(\pi,1)$ in every degree.

In degree $3$ we are able to give an extremely explicit description.
$\SS_3$ is homeomorphic to a product $X_3 \times \R$ where $X_3$ is a
3-manifold obtained from $S^3$ by drilling out a right-handed trefoil, and
gluing in a bundle $D_\infty \to N_\infty \to S^1$ where each fiber $D_\infty(t)$ 
over $t \in S^1$ is a disk minus a Cantor set. In fact, we are able to give
a completely explicit description of $D_\infty$ and its monodromy in terms of
an object called the {\em Tautological Elamination}. There
is one Tautological Elamination $\Lambda_T(t)$ for each $t$. 
These elaminations vary continuously in 
the so-called collision topology (defined in \S~\ref{subsection:collision}),
and the $D_\infty(t)$ are obtained by an operation called {\em pinching}.
Finally, the monodromy is completely described by the formula 
$\FF_t \Lambda_T(s) = \Lambda_T(s+t)$ where $\FF_t$ is an explicit flow 
on the space of elaminations. 

In words: the monodromy on $D_\infty$ is the composition of infinitely
many fractional Dehn twists in a disjoint collection of circles, associated
to the elamination $\Lambda_T$ in a concrete manner. The combinatorics of
$\Lambda_T$ is rather complicated and beautiful; Theorem~\ref{theorem:powers_of_2}
and \S~\ref{subsection:sausage_tautological} describe some of its properties.

One intermediate result that we believe is interesting in its own right, 
is the discovery of a new class of affine complex surfaces which are $K(\pi,1)$s:
\begin{regular_value_theorem}
Let $Y_n$ be the space of degree 3 polynomials $z^3 + pz + q$ for which 
$n$ specific complex values (e.g.\/ the $n$th roots of unity) are regular values.
Then $Y_n$ is homotopic to a locally $\CAT(0)$ complex, and consequently is a $K(\pi,1)$.
\end{regular_value_theorem}

Even the case $n=2$ is new, so far as we know. 

\subsection{Apology}

`Butcher' in the title of this paper and throughout is a rather inelegant pun on the
name B\"ottcher which, Curt McMullen informs me, translates to {\em cooper} in 
English (i.e.\/ a maker of casks). However etymologically misguided, I 
have decided to keep `butcher' for the sake of the sausages.

\subsection{Other Work}

I would like to compare and connect the constructions and techniques in this paper
to prior and ongoing work of other mathematicians. First and foremost I would like 
to emphasize the resemblance of elements of the theory of dynamical elaminations
to the DeMarco--Pilgrim theory of {\em pictographs} as explained in 
\cite{deMarco_Pilgrim} (to the degree that I understand them). 
In fact, DeMarco, sometimes in collaboration with
Pilgrim or McMullen, has developed a sophisticated and intricate picture of the
shift locus over many years and papers; e.g.\/ \cite{deMarco_McMullen,deMarco}. 
The fact that $\SS_d$ has the homotopy type of a $(d-1)$-complex follows from
DeMarco's thesis \cite{deMarco_thesis}, where it is proved that $\SS_d$ is a Stein manifold.
I wish I better understood the relationship between her work and the point of
view we develop here. 

Recently, Blokh et. al. \cite{Blokh_et_al} have developed a theory of laminations 
to parameterize the pinching of components of (higher degree) Mandelbrot sets. 
I belive there is a family resemblance of their laminations to the 
tautological elamination we introduce in \S~\ref{subsection:tautological_lamination} 
and its variants and completions in higher degree, but the precise relationship is unclear.

The significance of configuration-space techniques (e.g.\/ braiding of roots,
attractors, etc.) to complex dynamics has been apparent at least since
the work of McMullen \cite{McMullen} and Goldberg--Keen \cite{Goldberg_Keen}.
This is a vast story that I only touch on briefly in 
\S~\ref{section:fundamental_groups}.

Branner--Hubbard \cite{Branner_Hubbard_2}, in a tour de force, found 
a detailed description of much of the parameter space of degree 3 polynomials.
In particular, they showed that $\SS_3$ (away from a piece with easily understood 
topology) has the structure of a bundle
over a circle (up to homotopy) whose fiber has free fundamental group. This is
perfectly parallel to our Theorem~\ref{theorem:S_3_topology}.
However, in their theory (which is more concretely tied to polynomials) the 
monodromy is completely opaque, and the culmination of their description 
(in \S~11.4) is only meant to indicate how formidable an explicit computation 
would be. Whereas in our theory, we have a completely explicit description of 
the fiber (it is the disk obtained by pinching the tautological elamination) 
and the monodromy (rotation by $\FF_t$).

\section{The shift locus}

Fix an integer $d>1$, and let $f(z)=\sum b_j z^{d-j}$ be a complex polynomial of degree
$d$, so that $b_0 \ne 0$. A change of variables $z \to \alpha z + \beta$ with $\alpha \in \C^*$ 
conjugates $f$ to a polynomial
$$f(z) = \sum \frac {b_j} {\alpha} (\alpha z + \beta)^{d-j} - \frac {\beta}{\alpha} \\
= \alpha^{d-1}b_0 z^d + \alpha^{d-2}(d\beta b_0 + b_1) z^{d-1} + \cdots
$$
Setting $\alpha = b_0^{1/(d-1)}$ and $\beta = -b_1/db_0$ we can put $f$ in {\em normal form}
$$f(z) = z^d + a_2 z^{d-2} + a_3 z^{d-3} + \cdots + a_d$$

There is non-uniqueness in the choice of $\alpha$; different choices differ by multiplication
by a $(d-1)$st root of unity $\zeta$, which multiplies the coefficient $a_j$ by 
$\zeta^{d-j-1}$.

\begin{definition}[Shift locus]\label{definition:shift_locus}
The {\em shift locus} of degree $d\ge 2$, denoted $\SS_d$, is the space of 
normalized degree $d$ polynomials $f$ for which every critical point of $f$
is in the attracting basin of infinity.
\end{definition}

The critical points of $f$ are the roots of $f'$. To say a point $c$ is in the attracting
basin of infinity means that the iterates $c, f(c), f^2(c), \cdots$ 
converge to infinity.

Note that the property of being in the shift locus is expressed in purely dynamical
terms. Thus we could define $\SS_d$ to be the space of {\em conjugacy classes}
of polynomials with a certain dynamical property. The relationship between that
definition and the one we adopt comes down to an ambiguity of $\Z/(d-1)\Z$
in the representation of a conjugacy class by a normalized polynomial. 

The coefficients of a normalized degree $d$ polynomial embed $\SS_d$ as a subset of $\C^{d-1}$.
It is clear that $\SS_d$ is open, since for any polynomial $f$
the punctured disk $E(R):=\lbrace z: |z| > R\rbrace$ is in the attracting basin of infinity
for sufficiently big $R$ (depending continuously on $f$), and $f$ is in $\SS_d$ if and only
if there is some integer $n$ so that $f^n(c) \in E(R)$ for all critical points $c$.

Recall the following definition:

\begin{definition}[Julia Set]
The {\em Julia set} $J_f$ of a polynomial $f$ is the closure of the set of repelling
periodic orbits of $f$.
\end{definition}

The complement of $J_f$ in the Riemann sphere is the {\em Fatou} set $\Omega_f$; it is
the maximal (necessarily open) set on which $f$ and all its iterates together form a
normal family. Actually, it is perhaps more natural to take this to be the
definition of the Fatou set, and to define the Julia set to be its complement.
The Julia set and the Fatou set are both totally invariant
(i.e.\/ $f(J_f)=J_f=f^{-1}(J_f)$ and similarly for $\Omega_f$). The Julia set is 
always nonempty and perfect. See e.g.\/ Milnor \cite{Milnor_CD}, \S~4.

\begin{proposition}
A polynomial $f$ is in the shift locus if and only if the Julia set $J_f$ 
is a Cantor set on which the action of $f$ is uniformly expanding.
\end{proposition}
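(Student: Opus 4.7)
The plan is to prove both directions. The converse is easy: if $J_f$ is a Cantor set on which $f$ is uniformly expanding, then $|(f^n)'(z)| \to \infty$ on $J_f$ rules out critical points in $J_f$; and since $J_f$ is totally disconnected in the Riemann sphere, its complement is connected, forcing the Fatou set to consist of the single component $\Omega_\infty$. Thus every critical point lies in $\Omega_\infty$ and escapes, so $f \in \SS_d$.

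For the forward direction I would begin with the Green's function $G$ of the basin of infinity, characterized by $G \circ f = d\,G$, $G \equiv 0$ on the filled Julia set $K_f$, and $G(z) - \log |z| \to 0$ at infinity; the escape condition is equivalent to $G(c) > 0$ for every critical $c$. A first observation is that $J_f = K_f$: any bounded periodic Fatou component (attracting, parabolic, or Siegel) must either contain a critical point or have a critical orbit accumulate on its boundary, contradicting the hypothesis that every critical orbit escapes. Hence $\Omega_\infty$ is the only Fatou component and $J_f = \partial K_f = K_f$.

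Next I would build a Markov decomposition from the sublevel sets $K_n := \{z \in \C : G(z) \leq \rho/d^n\}$, for a fixed $\rho$ large enough that $K_0$ is a closed topological disk containing all critical values. Since $f^{-1}(K_{n-1}) = K_n$, these form a nested decreasing sequence with $\bigcap_n K_n = J_f$. Once $\rho/d^n$ drops below $\min_c G(c)$, the set $K_n$ contains no critical points, so $f \colon K_n \to K_{n-1}$ is an unbranched degree-$d$ cover; by Riemann--Hurwitz applied inductively from the disk $K_0$, each $K_n$ is a finite disjoint union of closed topological disks, with the number of components multiplying by $d$ once branching is exhausted. Standard Koebe-distortion bounds on $\Omega_\infty$ force the diameters of these components to shrink to zero, so $J_f$ is a Cantor set.

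The remaining and main technical step is uniform expansion on $J_f$. Since every critical orbit tends to $\infty$, the post-critical set is uniformly separated from $J_f$ in the Riemann sphere; the Poincar\'e metric on $\hat{\C}$ minus the closure of the post-critical set is then strictly expanded by $f$ on a neighborhood of $J_f$, and compactness of $J_f$ upgrades this pointwise expansion to a uniform Euclidean bound $|(f^n)'(z)| \geq C \lambda^n$ on $J_f$ for some $\lambda > 1$. This is the classical hyperbolic-polynomial argument, which I would cite from Milnor \cite{Milnor_CD} rather than reprove.
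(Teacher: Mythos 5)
Your converse direction coincides with the paper's. The forward direction has the same skeleton (a nested sequence of sublevel sets of the Green's function shrinking to $J_f$; the paper's $V_n=f^{-n}(U)$ are exactly your superlevel sets $\{G > \rho d^{-n}\}$), but you pass the key steps off to heavier machinery where the paper proves them by hand, and in doing so you leave two gaps.

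First, ``Riemann--Hurwitz applied inductively from the disk $K_0$'' computes Euler characteristics, but it does not by itself show that each component of $K_n$ is simply connected: a component $C$ mapping to a disk with degree $k$ and branching $b$ has $\chi(C)=k-b$, and R--H alone does not force $b=k-1$. The missing ingredient is exactly the observation the paper highlights: since $\infty$ is its own unique preimage, every component of $f^{-n}(U)$ contains $\infty$, so $V_n=\{G>\rho d^{-n}\}$ is connected; therefore every component of $K_n$ has connected complement in $\CP^1$, and a compact bordered planar surface with connected complement is a disk. You need to say this (or equivalently invoke connectedness of the superlevel sets of $G$) before R--H can count components. Second, the appeal to ``Koebe distortion'' to shrink diameters is not the standard mechanism here; the clean argument, and the one the paper gives, is Schwarz--Pick: the inverse branches of $f$ map a component of $K$ into the compactly contained $f^{-1}(K)$, hence are uniformly strictly contracting in the hyperbolic metric, which gives geometric decay of diameters and the uniform expansion of $f$ in one stroke.

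Beyond these gaps, your route is genuinely different in flavor. You establish $J_f=K_f$ up front via the classification of bounded Fatou components (which silently invokes Sullivan's no-wandering-domains theorem), and you get uniform expansion from the separate classical fact that a polynomial with all critical orbits escaping is hyperbolic, expanding the Poincar\'e metric on the complement of the closed post-critical set. This is correct but imports more than the statement needs. The paper deliberately avoids both: it never classifies Fatou components (it gets $J_f=\Lambda$ at the end for free, since a totally disconnected set cannot contain a Fatou component), and the Schwarz lemma on $K$ delivers both the Cantor structure and the uniform expansion simultaneously, keeping the proof elementary and self-contained. What your approach buys is a shorter write-up if the cited results are treated as black boxes; what the paper's buys is an argument that depends on nothing beyond the Schwarz lemma and the observation that $\infty$ is totally invariant, which is in keeping with the constructive, explicit spirit of the rest of the paper.
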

\begin{proof}
If $J_f$ is a Cantor set, its complement is connected and is therefore equal to
the attracting basin of infinity. If $f$ is uniformly expanding on $J_f$ then 
$|f'|$ is bounded below on $J_f$ by a positive constant, so $J_f$ can't contain any
critical points and $f$ is in the shift locus.

Conversely, suppose $f$ is in the shift locus.
Since $\infty$ is an attracting fixed point, there is a connected 
neighborhood $U$ of $\infty$ with $f(U) \subset U$. Because $f$ is
a polynomial, $\infty$ is its own unique preimage under $f$; it follows by induction
that for each $n$, the set $V_n:=f^{-n}(U)$ is both forward-invariant and 
connected (because each component contains $\infty$). Because $f$ is in the shift locus, there
is an $n$ so that all the critical points are contained in $V_n$. Let $K$ be the complement
of $V_n$, so that $K$ is a finite union of disks.

Because all the critical points are in $V_n$, each point in $K$ has exactly $d$ distinct 
preimages; these vary continously as a function of $K$, and since each component $D$ of 
$K$ is simply-connected, $f^{-1}|D$ has $d$ well-defined continuous branches with disjoint 
image. By the Schwarz Lemma the branches of $f^{-1}$ are uniformly contracting
in the hyperbolic metric on each component of $K$; 
thus the diameters of the components of $f^{-n}(K)$ converge
(at a geometric rate) to zero, so that $\Lambda:=\cap_n f^{-n}(K)$ is totally disconnected 
and $f$ is uniformly expanding on $\Lambda$. 

Evidently the complement of $\Lambda$ is the basin of infinity, so $J_f=\Lambda$. Since
$J_f$ is always perfect, it is a Cantor set, 
and $f$ is uniformly expanding on $J_f$, as claimed.
\end{proof}

\begin{example}[Mandelbrot set]
A quadratic polynomial $z \to z^2 + c$ has $0$ as its unique critical point. The set
of $c \in \C$ for which $0$ is {\em not} in the basin of infinity of $z \to z^2+c$ is 
called the {\em Mandelbrot set} $\M$; see Figure~\ref{Mandelbrot}. Thus $\M$ is the
complement of $\SS_2$ in $\C$. The connectivity of the Mandelbrot set (proved by
Douady and Hubbard \cite{Douady_Hubbard_1}) is equivalent to the fact that $\SS_2$ is
homeomorphic to an (open) annulus.
\end{example}

\begin{figure}[htpb]
\centering
\includegraphics[scale=0.2]{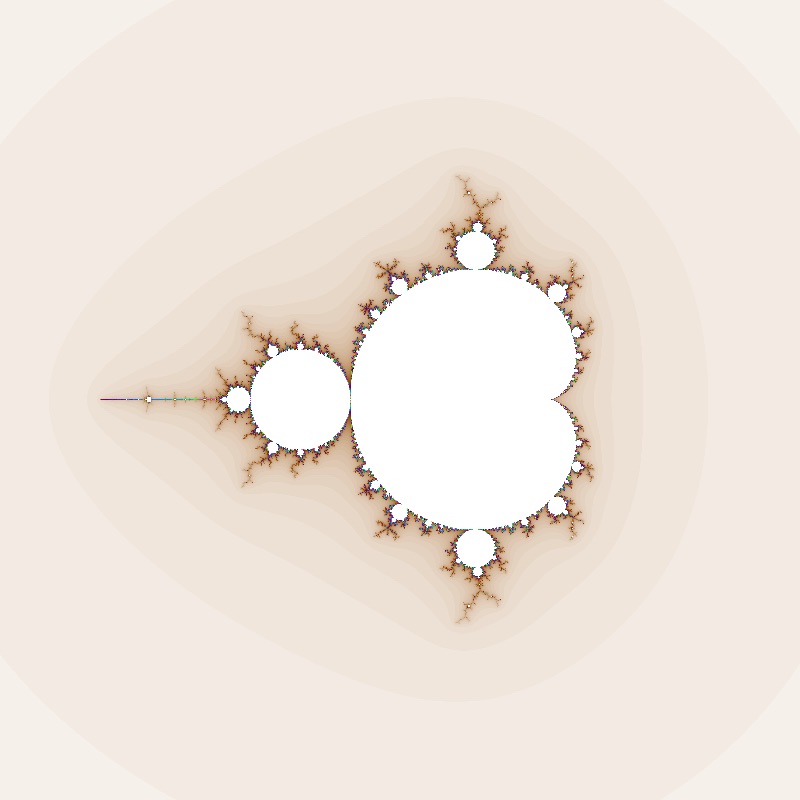}
\caption{The Mandelbrot set $\M$ (interior in white) is the complement of $\SS_2$ in $\C$}\label{Mandelbrot}
\end{figure}

\begin{example}[Discriminant complement]
Let $f(z)$ be any degree $d$ polynomial with distinct roots (i.e.\/ for which $0$ is
not a critical value). Then $g(z):=\lambda f(z)$ is (conjugate to a polynomial) 
in the shift locus for $|\lambda|\gg 1$.
To see this, let $U$ be any neighborhood of infinity for which $f(U)$ does not contain
$0$. Then for sufficiently large $|\lambda|$ we have $g(U) \subset U$ (so that $U$ is
contained in the attracting basin of infinity for $g$). Furthermore, $g$ and $f$
have the same critical points, so for sufficiently large $|\lambda|$ we have
$g(c) \in U$ for every critical point $c$ of $g$.

We can think of this as showing that near infinity, $\SS_d$ is `nearly equal' 
to the complement of the discriminant locus $\C^{d-1}-\Delta$. We shall elaborate on 
this remark in the sequel.
\end{example}

\begin{example}[Cantor set $J_f$]\label{example:Cantor_non_shift}
In degree two, $J_f$ is a Cantor set precisely when $f$ is in the shift locus, but
for degree bigger than two it is possible for $J_f$ to be a Cantor set for $f$ not
in the shift locus.

For example, consider the polynomial $f(z):=\alpha z(z-1)^2$ with $\alpha$ real
and positive. The fixed points are $0$ and $\beta^\pm:=1 \pm \sqrt{1 -(\alpha-1)/\alpha}$ and
the critical points are $1/3$ and $1$. Since $f(1)=0$, the polynomial $f$
is never in the shift locus. If $f(1/3)>\beta^+$ then 
$f^{-1}([0,\beta^+])$ is real and properly contained in $[0,\beta^+]$, and
$J_f= \cap_n f^{-n}([0,\beta^+])$ is a totally real Cantor set. This happens
for $\alpha>9$.

In the limiting case $\alpha = 9$, the Julia set $J_f$ is the real interval $[0,4/3]$.
\end{example}

Suppose $f$ is in the shift locus, so that $J_f$ is a Cantor set, equal to the
complement of the basin of infinity. Then $f$ has $d$ distinct fixed points, all in
$J_f$.

Because the dynamics of $f$ on $J_f$ is expanding, it is structurally stable there.
So if $f_t$ is a family of polynomials in the shift locus with Julia sets
$J_{f_t}$, there are open sets $U(t)$ containing $J_{f_t}$ and maps
$\varphi_t:U(0) \to U(t)$ conjugating $f_t|U(t)$ to $f_0|U(0)$. In particular, we
obtain a {\em monodromy representation} $\rho$ from the fundamental group 
$\pi_1(\SS_d)$ to the {\em mapping class group} of $\C - \Cantor$. This is an 
example of a so-called {\em big mapping class group}; see e.g.\/ \cite{Vlamis} 
for background and an introduction to the theory of such groups.

The dynamics of any $f$ on $J_f$ is conjugate to the action of
the shift on the space of one-sided sequences in a $d$ letter alphabet; this
justifies the name. One way to see this
is to take a compact $K$ containing $J_f$ in its interior for which $f|K$ has $d$ inverse
branches $f_1,\cdots,f_d$, and the $f_j(K)$ are disjoint subsets of the interior of $K$.
Then $J_f$ is in bijection with the set of right infinite words in the $\lbrace f_j\rbrace$.

\medskip

The geometry of $\SS_d$ is very complicated. For $d=2$ the space 
$\SS_2$ is the complement in $\C$ of the Mandelbrot set; showing 
that $\SS_2$ is conformal to a punctured disk is equivalent to showing 
that the Mandelbrot set is connected. The main goal of this paper is to
develop tools to describe the topology of $\SS_d$ for higher $d$.

\section{Elaminations}\label{section:elaminations}

In this section we introduce the concept of an {\em elamination}. Laminations, as
introduced by Thurston, are a key tool in low-dimensional geometry, topology
and dynamics; see e.g.\/ \cite{Thurston_notes}, Chapter~8.5. The reader already
familiar with laminations can think of the term `elamination' as an abbreviation
for `extended lamination', or `enhanced lamination' --- an ordinary lamination
with some extra structure. 

Elaminations are an essential combinatorial tool that will be used throughout 
the sequel, especially beginning with \S~\ref{section:butcher_paper}, so throughout 
this section we just spell out the basic theory, deferring the connection to 
dynamics until the sequel. There are some points of contact between elaminations
--- and in particular the `collision topology' on the space $\EL$ ---
to the theory partially developed by Thurston in \cite{Thurston_laminations}; but 
there are many points of difference, and it seems pointless to try to force
the two theories into a common framework.

Elaminations (and laminations for that matter) have several 
more-or-less equivalent identities, and it is useful to be
able to move back and forth between them. By abuse of notation, we will often use the
same symbol or term to refer to the underlying abstract object or any of its equivalent
manifestations. 

We fix the following notation here and throughout the rest of the paper: let $\D$ 
denote the {\em closed} unit disk in the complex plane $\C$, and let $\E:=\C-\D$
denote its {\em open} exterior.

\begin{definition}[Circle Lamination]
A {\em leaf} is a finite subset of the unit circle of cardinality at least $2$. A
leaf is {\em simple} if it consists of 2 points; a leaf of {\em multiplicity $n$} 
consists of $n+1$ points.

A {\em circle lamination} is a set of leaves, no two of which have 2 element
subsets that are linked. A circle lamination is simple if all its leaves are simple.
\end{definition}

Most authors require laminations to be closed in the space of finite subsets of
$S^1$ (in the Hausdorff topology), but we explicitly do {\em not} require this.

\begin{definition}[Geodesic Lamination]
A {\em simple geodesic leaf} is a complete geodesic in $\D$ with its hyperbolic metric.
A {\em geodesic leaf of multiplicity $n>1$} is an ideal $(n+1)$-gon.

A {\em geodesic lamination} is a set of geodesic leaves no two of which cross in
$\D$. A geodesic lamination is simple if all its leaves are simple.
\end{definition}

Every ideal $(n+1)$-gon in $\D$ determines an unordered set of $n+1$ endpoints in $S^1$ and
conversely. Two $(n+1)$-gons in $\D$ cross if and only if two pairs of their endpoints 
link in $S^1$. Thus there is a natural correspondence between circle laminations 
and geodesic laminations.

\begin{definition}[Elamination]\label{definition:elamination}
For each $z \in \E$ we let $\ell(z)$ denote the straight line segment from 
$z/|z|$ to $z$. We call $\ell(z)$ a {\em radial segment}. 
The {\em height} of the segment $\ell(z)$ is $\log(|z|)$.

An {\em extended leaf} of {\em height $h>0$}
is the union of a geodesic leaf in $\D$ (the {\em vein}) with
radial segments in $\E$ (the {\em tips}) all of height $h$,
attached at the endpoints of the vein. An extended leaf is {\em simple} 
if the vein is simple.

An {\em extended lamination}, or {\em elamination} for short, is a set of extended
leaves with the following properties:
\begin{enumerate}
\item{lamination: distinct leaves have distinct veins, and 
the set of all veins of all leaves forms a geodesic lamination (called the {\em vein}
of the elamination);}
\item{properness: there are only finitely many extended leaves with height $\ge \epsilon$ 
for any $\epsilon>0$ (thus every elamination has only countably many leaves); and}
\item{saturation: to be defined below.}
\end{enumerate}
\end{definition}

Let us now explain the meaning of saturation. Let $\Lambda$ be an elamination, and let
$\ell$ be a leaf with height $h$. Let $pq$ be an oriented edge of $\ell$, and
let $L$ be the finite set of leaves of $\Lambda$ on the positive side of $pq$ with
height $\ge h$. Let $L_p$ (resp. $L_q$) denote the subset of $L$ of leaves with
an endpoint with the same argument as $p$ (resp. $q$). Since leaves of
$\Lambda$ do not cross, and distinct leaves have distinct veins, the leaves $L_p$
are ordered by how they separate each other from $pq$; thus if $L_p$ is nonempty
there is a {\em closest} $\ell_p \in L_p$ to $pq$ (and similarly for $L_q$).
A leaf $\ell_p$ (resp. $\ell_q$) if it exists, is called an {\em elder sibling} for
$\ell$ at $p$ (resp. at $q$).

Saturation means the following two conditions hold for every $\ell$:
\begin{enumerate}
\item{an elder sibling of $\ell$ has height $h'$ strictly bigger than $h$; and}
\item{if $L_p$ is nonempty so is $L_q$ and vice versa; and
furthermore $\ell_p=\ell_q$.}
\end{enumerate}
We say that a leaf $\ell$ is {\em saturated} by an elder sibling.
Another way to say this is that if the vein of $\ell$ shares one endpoint with
the vein of a taller leaf $\ell'$, and there are no other $\ell''$ (also taller than $\ell$)
in the way, then the vein of $\ell$ actually shares two endpoints with $\ell'$.

\subsection{Pinching}

Let $\Lambda$ be an elamination. We define an operation called {\em pinching}
that associates to $\Lambda$ a Riemann surface $\Omega$ obtained from $\E$ by suitable
cut and paste along the tips of $\Lambda$.

\begin{construction}[Pinching]
Let $\Lambda$ be an elamination. For each leaf $\lambda$ with multiplicity $n$ and
with tips $\sigma_0,\cdots,\sigma_n$ enumerated in cyclic order in $S^1$,
cut open $\E$ along the $\sigma_j$ and glue the right side of
each $\sigma_j$ to the left side of $\sigma_{j-1}$ (indices taken mod $n+1$) 
by a Euclidean isometry.

The resulting Riemann surface $\Omega$ is said to be obtained from $\Lambda$ by 
{\em pinching}. We also write $\Omega = \E \mod \Lambda$.
\end{construction}

\begin{lemma}[Planar]
$\Omega$ obtained from an elamination $\Lambda$ by pinching is planar.
\end{lemma}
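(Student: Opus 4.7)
The plan is to prove that $\Omega$ has genus zero by an Euler characteristic calculation, reducing first to the case of finitely many leaves.

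First I would reduce to the finite-leaf case. By properness of $\Lambda$, only finitely many leaves have height $\geq \epsilon$ for any fixed $\epsilon > 0$, so any compact subsurface of $\Omega$ involves only finitely many leaves (those whose tips meet it, which have height bounded below). Since a Riemann surface is planar if and only if every compact subsurface has genus zero, it suffices to prove the lemma when $\Lambda$ is finite.

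For finite $\Lambda$, I would induct on $|\Lambda|$. The base case $\Lambda=\emptyset$ yields $\Omega=\E$, which is planar (it sits inside $\hat{\C}$ as the exterior of $\D$). For the inductive step, let $\Omega_0$ be the pinching by some $\Lambda_0 \subsetneq \Lambda$, planar by the inductive hypothesis, and let $\lambda$ be a new leaf of multiplicity $n$ with tips $\sigma_0,\ldots,\sigma_n$. The additional pinching cyclically identifies the right side of $\sigma_j$ with the left side of $\sigma_{j-1}$, and in particular collapses the $n+1$ tip endpoints $T_j$ to a single point. Locally, each strip $S_j$ (the region of $\E$ between consecutive slits $\sigma_j,\sigma_{j+1}$, below the tip height) has its two vertical sides identified by this rule, becoming a topological cylinder attached to the unaltered upper region $\{|z|>e^h\}$ along an arc of the circle $|z|=e^h$. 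A direct CW calculation on a compact exhaustion shows that adding $\lambda$ changes the Euler characteristic by $-n$ and increases the number of ends (boundary components) by $n$, so genus zero is preserved via $\chi=2-2g-b$. This completes the induction.

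The main obstacle is verifying that the local picture is really the one described, i.e., that the tips of $\lambda$ lie in a region of $\Omega_0$ still locally isomorphic to $\E$. Since each pinching affects only points on its own slits, and distinct leaves have tips at distinct angles on $S^1$, the pinchings do not interfere in an elementary sense; however, the saturation axiom is what is needed to make this precise when veins are nested, by providing the hierarchy of elder siblings that controls how smaller leaves sit inside the structure already created by taller ones.
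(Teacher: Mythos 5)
The paper's proof is a single sentence: ``This is equivalent to the fact that the leaves do not cross.'' Your proposal is a genuinely different route --- an Euler characteristic induction rather than a direct appeal to the non-crossing of veins --- and the reduction to the finite case via properness and the compact-exhaustion criterion for planarity is fine.

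However, there is a gap at exactly the crucial step, and your final paragraph points at the wrong axiom. The claim that adding a leaf $\lambda$ of multiplicity $n$ increases the number of boundary circles by $n$ (rather than, say, merging circles and producing a handle) is precisely where all the content lives, and it does not follow from tips having distinct angles or from saturation. Distinct tip angles only ensure the slits are disjoint as subsets of $\E$; they say nothing about how the $n+1$ cut edges land among the boundary circles of $\Omega_0$. If two leaves had veins that crossed --- i.e.\ tip angles interleaved on $S^1$ --- then the second pinching would connect two distinct boundary circles of $\Omega_0$ and one handle would appear, exactly as in the standard Euler-count for a genus-one surface; your formula $\chi = 2-2g-b$ would still hold, just with $g=1$. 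What forces the split-rather-than-merge alternative is the lamination axiom (distinct veins forming a geodesic lamination, i.e.\ no crossing), which guarantees that the $n+1$ tips of $\lambda$ lie on a single boundary circle of $\Omega_0$ in the correct cyclic order. Saturation is a constraint relating heights and shared endpoints of nested leaves; it does not prevent crossing, and invoking it here leaves the key topological fact unjustified. Once you replace the appeal to saturation with an explicit use of non-crossing, the Euler characteristic induction goes through and recovers, in more elaborate form, the one-line observation the paper makes.
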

\begin{proof}
This is equivalent to the fact that the leaves do not cross.
\end{proof}

By construction, the function $\log{|\cdot|}:\E \to (0,\infty)$ is
preserved under pinching, and therefore descends to a well-defined proper function
on $\Omega$ that we refer to as the {\em height function} or sometimes as
the {\em Green's function}, and denote $h$. 
Furthermore, $d\arg$ is a well-defined 1-form on 
$\Omega$, so the level sets of the height function are finite unions of metric graphs.
We sometimes denote $d\arg$ by $d\theta$. In fact, the combination $dh+id\theta$ is
just the image of $d\log(z)$ on $\E$, which makes sense because this 1-form is
preserved by cut-and-paste. By abuse of notation therefore we sometimes write
$dh+id\theta = d\log(z)$. This $1$-form has 
a zero of multiplicity $m$ for each leaf of multiplicity $m$.

\begin{definition}[Monkey pants]\label{definition:monkey_pants}
A {\em monkey pants} is a (closed) disk with at least two (open) subdisks removed.
If $P$ is a monkey pants, a function $\pi:P \to [t_1,t_2]$ is {\em monkey Morse} if it is a
submersion away from finitely many points in the interior which are all saddles or
monkey saddles, and if $\pi^{-1}(t_2)$ is equal to a distinguished boundary component
$\partial^+ P$ (the {\em waist}) and $\pi^{-1}(t_1)$ is equal to the other 
components $\partial^- P$ (the {\em cuffs}).
\end{definition}

Let $\Omega$ be the Riemann surface associated to an elamination.
If $0 < t_1 < t_2$ are numbers not equal to the height of any leaf, then
$\Omega([t_1,t_2]):= h^{-1}[t_1,t_2] \subset \Omega$ is a monkey pants, 
and $h$ restricted to $\Omega([t_1,t_2])$ is monkey Morse. There is one saddle
point for each simple leaf with height in $[t_1,t_2]$, and one monkey saddle with
multiplicity equal to the multiplicity of a non-simple leaf.

Suppose $\Lambda$ is a finite elamination, which pinches $\E$ to $\Omega$. Then
$\Omega$ is a plane minus $n+1$ disks, where $n$ is the number of leaves of 
$\Lambda$ counted with multiplicity. 
If $t$ is the least height of leaves of $\Lambda$, then $\Omega((0,t))$ is
a disjoint union of $n+1$ annuli whose inner `boundary components' 
(where $h \to 0$) can be compactified by $n+1$ circles. We refer to this collection
of circles as $S^1 \mod \Lambda$. Thus: just as $\E$ is compactified (away from $\infty$)
by $S^1$, the surface $\E \mod \Lambda$ is compactified (away from $\infty$) by 
$S^1 \mod \Lambda$.

\subsection{Push over and amalgamation}\label{subsection:collision}

Denote the set of elaminations by $\EL$. We would like to define a natural topology on
$\EL$. In a nutshell, a family of elaminations $\Lambda_t$ in $\EL$ 
varies continuously if and only if the Riemann surfaces $\Omega_t=\E \mod \Lambda_t$ 
do. 

Because of properness, an elamination $\Lambda$ has only finitely many leaves of height
bigger than any positive $\epsilon$. When these leaves have disjoint veins, it is 
obvious what it means to say that they vary continuously in a family: it just means
that the heights and arguments vary continuously.

When two leaves of different heights collide, the shorter leaf becomes 
{\em saturated} by the taller (which becomes at that moment its elder sibling); 
if we continue the motion in the obvious way, the shorter leaf
becomes unsaturated as it moves away from the taller leaf, and the net result is that
the shorter leaf has been {\em pushed over} the taller one.
The meaning of this is illustrated in Figure~\ref{saturated_and_push_over}.

\begin{figure}[htpb]
\labellist
\small\hair 2pt
\pinlabel $\rightsquigarrow$ at 0 0
\endlabellist
\centering
\includegraphics[scale=0.6]{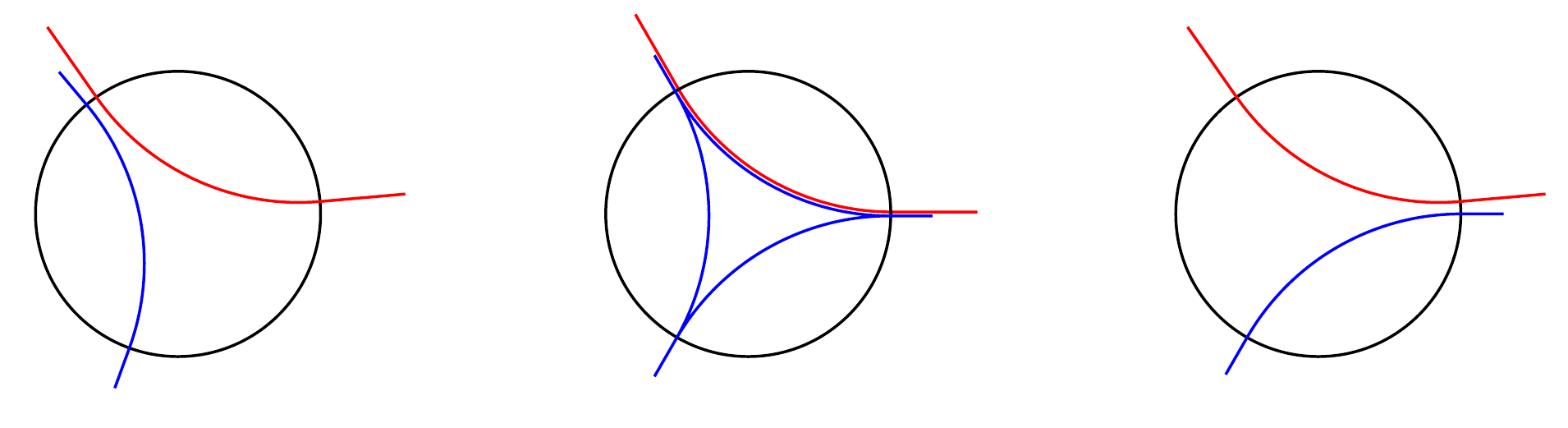} 
\caption{Pushing a shorter leaf over a taller one; at the intermediate step the shorter leaf
is saturated by the taller one}\label{saturated_and_push_over}
\end{figure}

When two leaves of the same height collide, saturation dictates that they must become
amalgamated into a common leaf; see Figure~\ref{amalgamate}.

\begin{figure}[htpb]
\labellist
\small\hair 2pt
\pinlabel $\rightsquigarrow$ at 0 0
\endlabellist
\centering
\includegraphics[scale=0.6]{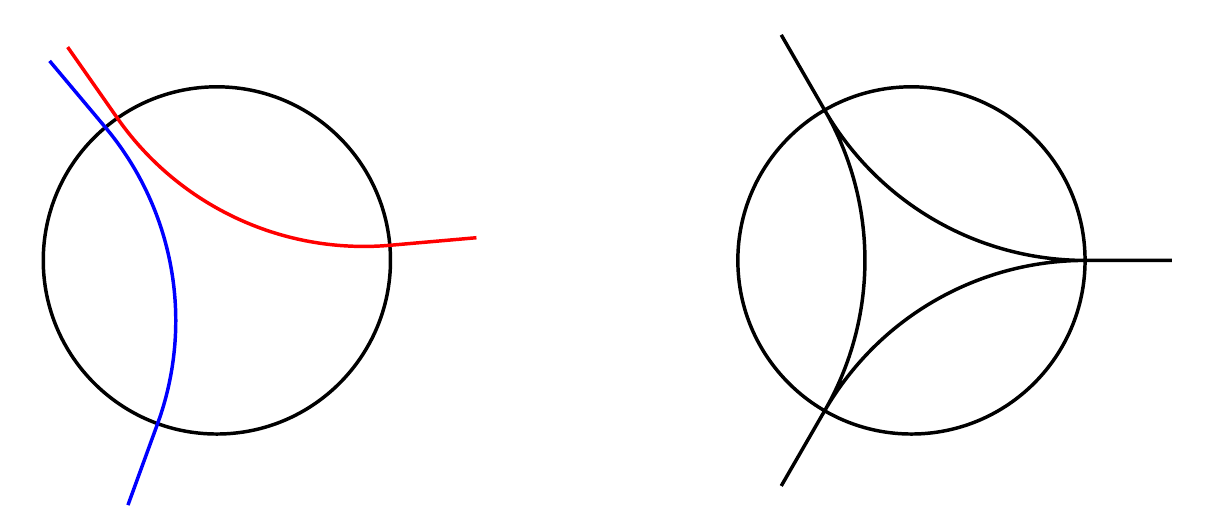} 
\caption{When two simple leaves of the same height collide, they amalgamate 
to form a leaf of multiplicity 2}\label{amalgamate}
\end{figure}

We now define a topology on $\EL$ called the {\em collision topology}.
\begin{definition}[Collision Topology]
A family of elaminations $\Lambda_t$ varies {\em continuously} in $\EL$ in the
collision topology if every finite subset of leaves varies continuously 
when they are disjoint, and varies by push over or amalgamation when they collide.
\end{definition}

The whole point of the collision topology is that it is compatible with pinching.

\begin{lemma}[Continuous quotient]
If $\Lambda_t$ varies continuously in $\EL$ then $\Omega_t$ vary continuously as
Riemann surfaces.
\end{lemma}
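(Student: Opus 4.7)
The plan is to check continuity locally, in both the source and the target. First, by properness of $\Lambda$, for any $\epsilon>0$ only finitely many leaves have height $\ge \epsilon$, and in fact, by the nature of collisions (which only involve leaves of comparable height colliding at a common point), for a neighborhood of any fixed $t_0$ the number of leaves with height $\ge \epsilon$ stays uniformly bounded. Since continuity of a family of Riemann surfaces can be tested on an exhaustion by relatively compact pieces, it suffices to check that the truncated surface $\Omega_t \cap h^{-1}[\epsilon,\infty)$ varies continuously, and this truncation depends only on the (finite) set of leaves of height $\ge \epsilon$. So from the outset we may assume $\Lambda_t$ is a finite elamination with a uniformly bounded number of leaves.

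For $t$ in a neighborhood of $t_0$ on which no two of these leaves collide, the radial tip segments are disjoint in $\E$ and their positions (argument and height) depend continuously on $t$. The cut-and-paste construction then involves gluing the two sides of each tip by a Euclidean isometry whose data (the position of the tip) varies continuously, and there is an obvious continuous family of diffeomorphisms $\phi_t\colon \Omega_{t_0}\to \Omega_t$ supported in arbitrarily thin tubular neighborhoods of the cuts, whose Beltrami differentials tend to zero as $t\to t_0$. So continuity is immediate away from collision moments.

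The key step is what happens at a collision. At a push-over moment $t_0$, a shorter leaf $\ell$ becomes saturated by a taller leaf $\ell'$; since distinct leaves have distinct veins, $\ell'$ must have multiplicity $\ge 2$, and the vein of $\ell$ at $t_0$ coincides with a pair of vein endpoints of $\ell'$. Correspondingly, the tips of $\ell$ at $t_0$ lie as sub-segments of two of the tips of $\ell'$. One must verify directly that the cut-and-paste recipe for $\Omega_{t_0}$ is the common limit of the recipes for $\Omega_t$ as $t$ approaches $t_0$ from either side: when the short tip sits inside a tall tip, the combined gluing across both tips reduces (by cancellation along the overlap) to exactly the gluing one would perform if the short leaf were slightly displaced to either side. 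At an amalgamation moment, two leaves of the same height $h$ merge into a single leaf of multiplicity $2$ at height $h$, and the two independent cyclic gluings among $2+2=4$ tips limit to a single cyclic gluing on $3$ tips as two of the tips merge; this is again a direct check from the definition of the cyclic gluing $\sigma_j\to\sigma_{j-1}$.

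The hard part is setting up these verifications cleanly enough to show continuity of complex structure and not just of underlying topological surface. The cleanest route is to build, for each $t$ close to $t_0$, an explicit quasiconformal map $\phi_t\colon \Omega_{t_0}\to \Omega_t$ whose Beltrami coefficient is supported in a ball of arbitrarily small Euclidean diameter around each colliding tip, and whose $L^\infty$-norm goes to zero as $t\to t_0$. The local model at a push-over (a short radial slit sliding over the tip of a longer one) and at an amalgamation (two parallel radial slits of the same height colliding along their common outer endpoint) are both two-dimensional and concrete enough that these quasiconformal local models can be written down by hand. Then $\Omega_t\to \Omega_{t_0}$ in Teichm\"uller distance on any relatively compact piece of $\Omega_{t_0}$, which is the notion of continuity compatible with the collision topology on $\EL$.
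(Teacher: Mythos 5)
Your proposal takes the same approach as the paper's (which is essentially just the assertion that push-over and amalgamation are continuous under pinching ``by definition''), but fills in the substance with the exhaustion-by-height reduction to a finite elamination and quasiconformal local models at the collision moments; this is correct. One small slip: at a push-over moment the taller leaf $\ell'$ need not have multiplicity $\ge 2$ in general --- if the shorter leaf $\ell$ itself has multiplicity $\ge 2$ its vein may properly contain the vein of a \emph{simple} $\ell'$ without crossing it --- though this does not affect the quasiconformal estimate you invoke.
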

\begin{proof}
The only thing to check is that push over and amalgamation are continuous under
pinching; but this is essentially by definition.
\end{proof}

\section{Butcher Paper}\label{section:butcher_paper}

\subsection{B\"ottcher Coordinates}

Let $f(z):=z^d+a_2z^{d-2}+ \cdots + a_d$ be a degree $d$ polynomial in normal form.
Lucjan B\"ottcher, a Polish mathematician who worked in Lvov in the beginning
of the 20th century, showed \cite{Bottcher}
that $f$ is conjugate to $z\to z^d$ in a neighborhood of infinity:

\begin{proposition}[B\"ottcher Coordinates]\label{bottcher_coordinates}
Let $f(z):=z^d+a_2z^{d-2}+ \cdots + a_d$ be a degree $d$ polynomial in normal form.
Then $f$ is holomorphically conjugate to $z \to z^d$ on some neighborhood of infinity.
\end{proposition}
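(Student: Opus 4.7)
The plan is to work in coordinates at infinity. Set $w = 1/z$ and $g(w) := 1/f(1/w)$; a short computation gives
$$g(w) = \frac{w^d}{1 + a_2 w^2 + a_3 w^3 + \cdots + a_d w^d} = w^d(1 + c_1 w + c_2 w^2 + \cdots)$$
valid on a small punctured disk around $0$, where $c_j$ are polynomials in the $a_k$. Thus $0$ is a superattracting fixed point of $g$ of local degree $d$, and Böttcher's claim becomes: there is a biholomorphism $\varphi$ from a neighborhood of $0$ to a neighborhood of $0$, tangent to the identity, such that $\varphi(g(w)) = \varphi(w)^d$.

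To construct $\varphi$, I would define $\varphi_n(w) := (g^n(w))^{1/d^n}$, choosing the branch of the $d^n$-th root whose value at a base point (or whose Taylor expansion) is asymptotic to $w$ at the origin. The existence of a consistent branch follows because on a small disk $|w| < r$ the map $g^n$ sends $0$ to $0$ with multiplicity $d^n$ and is nonvanishing elsewhere, so $g^n(w)/w^{d^n}$ is a nonvanishing holomorphic function on that disk, and one may extract its $d^n$-th root as $1 + O(w)$. With this choice, $\varphi_n(w) = w \cdot (g^n(w)/w^{d^n})^{1/d^n}$ is a holomorphic function tangent to the identity at $0$.

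Next I would prove that $\varphi_n$ converges uniformly on some small disk. The key estimate is
$$\frac{\varphi_{n+1}(w)}{\varphi_n(w)} = \left( \frac{g(g^n(w))}{g^n(w)^d} \right)^{1/d^{n+1}} = (1 + O(g^n(w)))^{1/d^{n+1}}.$$
Since $g$ contracts geometrically near $0$ (roughly $|g(w)| \leq |w|^d \leq |w|^2$ for $|w|$ small), we get $|g^n(w)| \leq \rho^{d^n}$ for some $\rho < 1$ on a small disk, and then
$$\left| \log \tfrac{\varphi_{n+1}}{\varphi_n} \right| \leq C \rho^{d^n}/d^{n+1},$$
which is a summable bound. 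Hence $\log(\varphi_n/w) = \sum_{k<n} \log(\varphi_{k+1}/\varphi_k)$ converges uniformly, and the limit $\varphi := \lim \varphi_n$ is holomorphic, tangent to the identity at $0$, and in particular a biholomorphism onto its image on a slightly smaller disk by the inverse function theorem.

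Finally, the functional equation is essentially built into the construction: from $\varphi_n(g(w)) = (g^{n+1}(w))^{1/d^n} = \varphi_{n+1}(w)^d$ (with branches chosen compatibly), passing to the limit gives $\varphi(g(w)) = \varphi(w)^d$. Pulling back to the $z$-coordinate, the map $\psi(z) := 1/\varphi(1/z)$ is a biholomorphism from a neighborhood of $\infty$ to a neighborhood of $\infty$, tangent to the identity there, conjugating $f$ to $z \mapsto z^d$. The main technical obstacle is the first one I addressed: organizing the branch choices of $d^n$-th roots so that the iterates $\varphi_n$ are well defined on a common disk and form a convergent sequence; once the telescoping estimate is in place the remainder of the proof is routine.
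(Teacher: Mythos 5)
Your proof is correct and is precisely the standard argument for B\"ottcher's theorem; the paper gives no proof of its own but defers to Milnor (Thm.~9.1), and your construction --- passing to $w=1/z$, forming $\varphi_n = (g^n)^{1/d^n}$ with the branch tangent to the identity, and using the telescoping estimate on $\log(\varphi_{n+1}/\varphi_n)$ for uniform convergence --- is exactly the argument found there.
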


For a proof see e.g.\/ Milnor \cite{Milnor_CD}, Thm.~9.1.

\subsection{Holomorphic 1-form}

Let's let $\phi$ be the holomorphic conjugacy promised by 
Proposition~\ref{bottcher_coordinates} normalized so that $\phi f \phi^{-1}(z) = z^d$
near infinity. The map $\phi$ is only defined in a neighborhood of infinity, but we
can extend it inductively over larger and larger domains by using the functional
equation. Recall that $\E$ denotes the exterior of the closed unit disk in $\C$;
i.e.\/ $\E$ is the basin of infinity of $z \to z^d$. The function $\log{z}$ is not
single-valued on $\E$, but its differential $dz/z$ is. The map $z \to z^d$ 
pulls back $dz/z$ to $d\cdot dz/z$ (we use the notation $d\cdot$ to indicate
multiplication by the degree $d$ to distinguish it from the exterior derivative of forms).
If we define $\alpha:=\phi^* dz/z$ in a neighborhood of infinity, 
we can extend $\alpha$ uniquely to all of the Fatou set $\Omega_f$ by iteratively solving 
$f^*\alpha = d\cdot \alpha$. Thus $\alpha$ is a holomorphic $1$-form on $\Omega_f$ 
with zeroes at the critical points of $f$ and their preimages.

\subsection{Horizontal/Vertical foliations}\label{horizontal_vertical_subsection}

The real and imaginary parts of $\alpha$ and $dz/z$ give rise to foliations on
$\Omega_f$ and on $\E$ related by $\phi$ near infinity. We call these the {\em horizontal} 
and the {\em vertical} foliations respectively. 

On $\E$ these foliations are nonsingular;
the horizontal leaves are the circles $|z|=\text{constant}$ and the vertical leaves are
the rays $\text{arg}(z)=\text{constant}$. The corresponding foliations
on $\Omega_f$ have saddle singularities at simple critical points and their preimages,
and monkey saddle singularities at critical points (and their preimages) of multiplicity
bigger than one (as roots of $f'$). Evidently $\phi$ may be extended by analytic
continuation along every nonsingular vertical leaf, and along every singular leaf from
infinity until the first singularity. These singularities are critical points and
their preimages; this is a proper subset of $\Omega_f$.

\subsection{Construction of the dynamical elamination}\label{subsection:dynamical_elamination}

Let $L_f \subset \Omega_f$ be the complement of this (maximal) domain of definition of $\phi$, 
and $L \subset \E$ the complement of $\phi(\Omega_f - L_f)$. These subsets are both
closed and backwards invariant. The complements $\Omega_f - L_f$ and $\E - L$ are 
open, simply connected, and dense. The set $L$ consists of a countable collection of
radial segments; in the generic case there are exactly two such segments
$\ell(q^\pm)$ for each critical or pre-critical point $p$. One may think of 
$q^\pm$ as the `image' of $p$ under $\phi$. If $c$ is a simple critical point with image
$v=f(c)$ then $\phi(v)$ will have $d$ preimages under $z \to z^d$, whereas $v$ will only
have $d-1$ preimages under $f$; the two of the preimages of $\phi(v)$ that 
correspond to $c$ are $q^\pm$.

\begin{example}
If $f$ has real coefficients, $\phi$ preserves the real axis. Thus the vertical
leaves with $\arg(\phi(z))\in \pi d^{-n} \Z$ consist of the $z$ with $f^n(z)$ real. 
The polynomial $f(z):=z^3 + 3z + 3^{-1/2}$ has critical points at $\pm i$ with
initial forward orbit 
$$\pm i \to 3^{-1/2}\pm 2i \to -23\cdot 3^{-3/2} \approx -4.42635$$

Figure~\ref{landing_rays} shows some vertical leaves in $\Omega_f$ and in $\E$ in
the preimage of the negative real axis. $L_f$ and $L$ are in red. The set
$L_f \cup J_f$ is a dendrite.
\begin{figure}[htpb]
\centering
\includegraphics[scale=0.4]{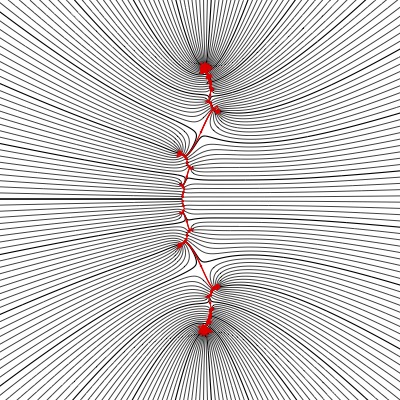} \hskip 24pt
\includegraphics[scale=0.4]{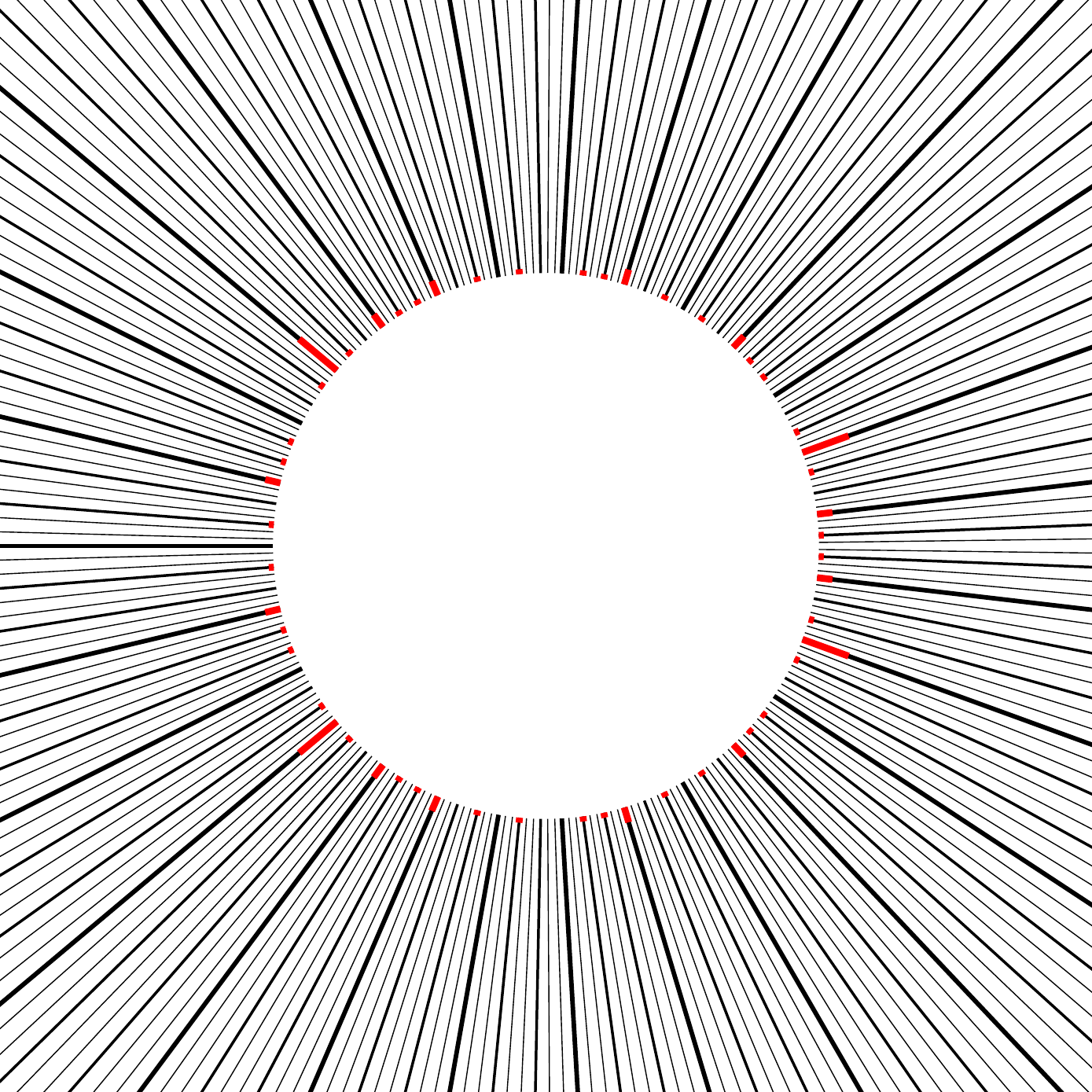}
\caption{Vertical leaves in $\Omega_f$ and in $\E$ for $f(z):=z^3 + 3z + 3^{-1/2}$}\label{landing_rays}
\end{figure}

Note that $\arg(\phi(f^2(i)))=\pi$ and 
$\arg(\phi(f(i)))=\pi/3$. The absolute value $|\phi(i)|$ is well-defined,
and equal to approximately $1.18$, but $\arg(\phi(i))$ is multi-valued, and takes
values $7\pi/9$ and $\pi/9$.
\end{example}

One may repair this multi-valuedness of $\phi$ by doing cut-and-paste on $\E$: cut
open $\E$ along the segments $L$ and reglue edges in pairs, so that each copy of
$\ell(q^+)$ is glued to a copy of $\ell(q^-)$ in the unique manner which is
orientation-reversing and compatible with the dynamics $z \to z^d$. The result is a
new Riemann surface $\Omega$ on which the map $z \to z^d$ on $\E-L$ extends uniquely 
to a holomorphic degree $d$ map $F: \Omega \to \Omega$ 
and for which $\phi:\Omega_f -L_f \to \E-L$ extends to a holomorphic
isomorphism $\phi:\Omega_f \to \Omega$ conjugating $f$ to $F$. 

Another way to say this is that $L$ is the set of tips of a simple elamination $\Lambda$,
with one leaf for each pair $\ell(q^\pm)$. And $\Omega$ is precisely the Riemann
surface obtained from $\Lambda$ by pinching, together with the 1-form $dz/z$ whose
real and imaginary parts are the (derivatives of) height and argument respectively.

When one talks about constructing a Riemann surface by gluing Euclidean polygons,
one sometimes says the Riemann surface is built `from paper' 
(see e.g.\/ \cite{Carvalho_Hall}).
As a mnemonic therefore, and by abuse of homonymy, we say that $\Omega$ is built from 
{\em butcher paper}.

In case some critical points are not simple, there might be three (or more) segments
in $L$ associated to some (pre)-critical points, and some segment $\ell(q^\pm)$ associated
to a critical point $c$ might be a subsegment of some precritical $\ell(r^\pm)$ associated
to another critical point. Exactly as in the simple case, these sets form the tips of
the leaves of an elamination $\Lambda$ (no longer simple) and $\Omega=\E \mod \Lambda$.

\begin{definition}[Dynamical Elamination]
The elamination $\Lambda$ obtained from $f$ as above is called the {\em dynamical
elamination} associated to $f$.
\end{definition}

If we need to stress the dependence of $\Lambda$ on $f$ we denote it $\Lambda(f)$.

\begin{lemma}
The assignment $\Phi:f \to \Lambda(f)$ is a continuous function from $\SS_d$ to $\EL$
that we call the {\em butcher map}.
\end{lemma}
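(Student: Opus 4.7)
The plan is to reduce continuity to the behavior of finitely many leaves at a time by exploiting properness, then to track heights and tip arguments via the dependence of the B\"ottcher coordinate $\phi_f$ on $f$, and finally to analyze the two local models that can occur when tips meet.

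First I would fix $f_0 \in \SS_d$ and $\epsilon>0$. By properness, $\Lambda(f_0)$ has only finitely many leaves of height $\ge \epsilon$; each is associated to a critical point of $f_0$ or to one of its preimages of depth at most some $N=N(\epsilon,f_0)$. Critical points of $f$ are roots of $f'$ and so vary continuously in $f$; the same is true of their preimages up to depth $N$ as long as these points stay off the critical set, which is automatic in $\SS_d$ for depth large enough. Thus it suffices to check the collision topology condition against this finite set of (pre)critical points, and to let $\epsilon \to 0$ at the end.

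Next I would use that $\phi_f$ depends holomorphically on $f$ in a fixed neighborhood of infinity, and that the functional equation $\phi_f\circ f = (\phi_f)^d$ extends $\phi_f$ to larger domains by inductively pulling back along branches of $f$. In the shift locus all critical values eventually lie in the domain of $\phi_f$, so the \emph{height} of the leaf through a (pre)critical point $p$ is $\log|\phi_f(f^n(p))|/d^n$ for suitable $n$ (a Green's function value) and depends continuously on $f$. Away from collisions the tip \emph{arguments} likewise vary continuously, because the relevant branch of $\phi_f^{-1}$ can be continued unambiguously along radial segments whose arguments are distinct from those already occupied by taller leaves.

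The heart of the proof is the local model at a collision, which splits into two cases. In the first case two leaves $\ell_1,\ell_2$ with strict height inequality $h_1>h_2$ collide, meaning that as $f_t$ moves, the tip of $\ell_2$ sweeps past the vein of $\ell_1$. At the instant of collision $\ell_2$ is saturated by $\ell_1$ (the extended leaf $\ell_2$ attaches to $\ell_1$); immediately after, the branch of $\phi_{f_t}^{-1}$ that cuts out $\ell_2$ has moved to the opposite side of the cut defining $\ell_1$. This is exactly the push-over operation. In the second case $h_1=h_2$ at the moment of collision; here the two (pre)critical data points become equidistributed under iteration, the relevant inverse branches of $\phi_{f_t}$ fuse to a single branch of higher order, and the two simple leaves amalgamate into one leaf of multiplicity $2$ (or higher, in the case of further simultaneous collisions). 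Saturation requirements are preserved because elder siblings are characterized by having strictly greater height.

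The hard part will be the last step: verifying that every collision is of one of these two types and that no more complicated combinatorial event can occur. This is a local analysis of the branch structure of $\phi_{f_t}^{-1}$ in a neighborhood of the transient critical value configuration, which must be matched precisely against the definitions of push-over and amalgamation in \S~\ref{subsection:collision}. Once this local model is established, continuity of $\Phi$ in the collision topology follows by combining it with the continuous dependence of heights and disjoint-tip arguments established in the earlier steps, and letting $\epsilon \to 0$.
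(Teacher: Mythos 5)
Your approach is genuinely different from the paper's, and also incomplete. The paper's proof is a two-sentence affair: it observes that the Fatou sets $\Omega_f$ together with their vertical and horizontal foliations vary continuously in $f$, that $\Lambda(f)$ is recovered from $\Omega_f$ by inverting the pinching construction (since $\Omega_f \cong \E \mod \Lambda(f)$), and that the collision topology was \emph{defined} precisely so that the inverse of pinching is continuous. In other words, the whole content is pushed back into the design of the collision topology in \S~\ref{subsection:collision} and the continuity of $f \mapsto \Omega_f$; nothing is done leaf-by-leaf. You instead take the low road: reduce via properness to finitely many leaves, track heights and tip arguments through the dependence of the B\"ottcher coordinate $\phi_f$ on $f$, and then analyze local models at collisions. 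That is a legitimate unwinding of the same underlying fact, and it buys something the paper's proof does not --- namely an explicit reason why heights are continuous (they are Green's function values) and why arguments are continuous away from collisions.

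However, you yourself flag the gap, and it is a real one. The claim that every collision is either a push-over or an amalgamation is exactly the content of the statement that the collision topology is the right one, and you defer it as ``the hard part'' without an argument. Without that step your proof establishes continuity only on the open dense locus where the tall leaves of $\Lambda(f)$ are pairwise disjoint, which is strictly weaker than the lemma. To close the gap you would need the local branch analysis you describe: show that when a (pre)critical tip of $\ell_2$ crosses the cut defining $\ell_1$, the inverse branches of $\phi_{f_t}$ on the two sides match up after the crossing in a way that reproduces push-over, and that when two critical heights coincide the corresponding branches of $\phi_{f_t}^{-1}$ merge to produce a single leaf of higher multiplicity (amalgamation), with no third possibility. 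The paper sidesteps this by having already baked it into the definition of the collision topology, so if you take the hands-on route you must pay for it by actually doing the local model. As written, your proposal is a correct outline with a genuinely unproved key step, whereas the paper trades that step for the earlier design choice.
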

\begin{proof}
The Fatou sets $\Omega_f$ together with their vertical/horizontal foliations vary
continuously as a function of $f$. Since $\Lambda(f)$ can be recovered from $\Omega_f$
under the identification of $\E \mod \Lambda(f)$ with $\Omega_f$, and since we defined
the topology on $\EL$ so that the inverse of pinching is continuous, the lemma follows.
\end{proof}

\section{Formal shift space}\label{formal_shift_space}

In this section we shall characterize the dynamical elaminations $\Lambda(f)$
that arise from shift polynomials by the construction in 
\S~\ref{subsection:dynamical_elamination}, and describe an inverse map. The
existence of this inverse is the Realization Theorem~\ref{theorem:realization},
due essentially to DeMarco--McMullen, although we express things in rather 
different language.

In this section we use logarithmic coordinates and
fix the notation $\log(z)=r+i\theta$ for
$z\in \E$, so that $r \in \R^+$ and $\theta \in \R/2\pi\Z$, and we denote
the radial segment associated to $z$ by $\ell(r,\theta)$. In $(r,\theta)$ coordinates,
the map $z \to z^d$ acts as multiplication by $d$. We call $r$ the {\em height}
and $\theta$ the {\em angle} of the segment $\ell(r,\theta)$. 

\subsection{Dynamical Elaminations}

The geometry and combinatorics of $L$ is best expressed in the language of
elaminations. Let's fix the degree $d$ in what follows.

\begin{definition}[Critical data]
A (degree $d$){\em critical leaf} is an extended leaf whose tips have angles that are
equal mod $2\pi d^{-1}$. 

If $C_1,\cdots,C_e$ is a finite set of degree $d$ critical leaves, we say the
{\em critical multiplicity} of $C_j$ is equal to its ordinary multiplicity, minus
$1$ for every $C_k$ with greater height which shares a pair of ideal points with $C_j$.

A (degree $d$) {\em critical set} is a finite elamination consisting of 
degree $d$ critical leaves $C_1,\cdots,C_e$ whose critical multiplicities sum to $d-1$.
\end{definition}

The map $z \to z^d$ acts on radial segments by $\ell(r,\theta) \to \ell(dr,d\theta)$.
This induces a (partially) defined action on extended leaves, that might reduce multiplicity
if distinct tips have angles that differ by a multiple of $2\pi d^{-1}$. If $\lambda$
is a leaf for which all tips have angles that differ by a multiple of $2\pi d^{-1}$,
the image of $\lambda$ under $z \to z^d$ is undefined. For instance, $z \to z^d$ is
undefined on any critical leaf. If $P$ is a leaf, we denote its image under $z \to z^d$
by $P^d$.

\begin{definition}[Dynamical Elamination]
A {\em dynamical elamination} $L$ is an elamination containing a finite subset of leaves
$C$ which is a degree $d$ critical set, and such that $z \to z^d$ maps 
$L-C$ to $L$ in a $d$ to $1$ manner. We say $L$ is {\em generated by} $C$.
\end{definition}

Figure~\ref{dynamical_lamination} indicates a simple dynamical elamination of degree 3. 
 
\begin{figure}[htpb]
\centering
\includegraphics[scale=0.8]{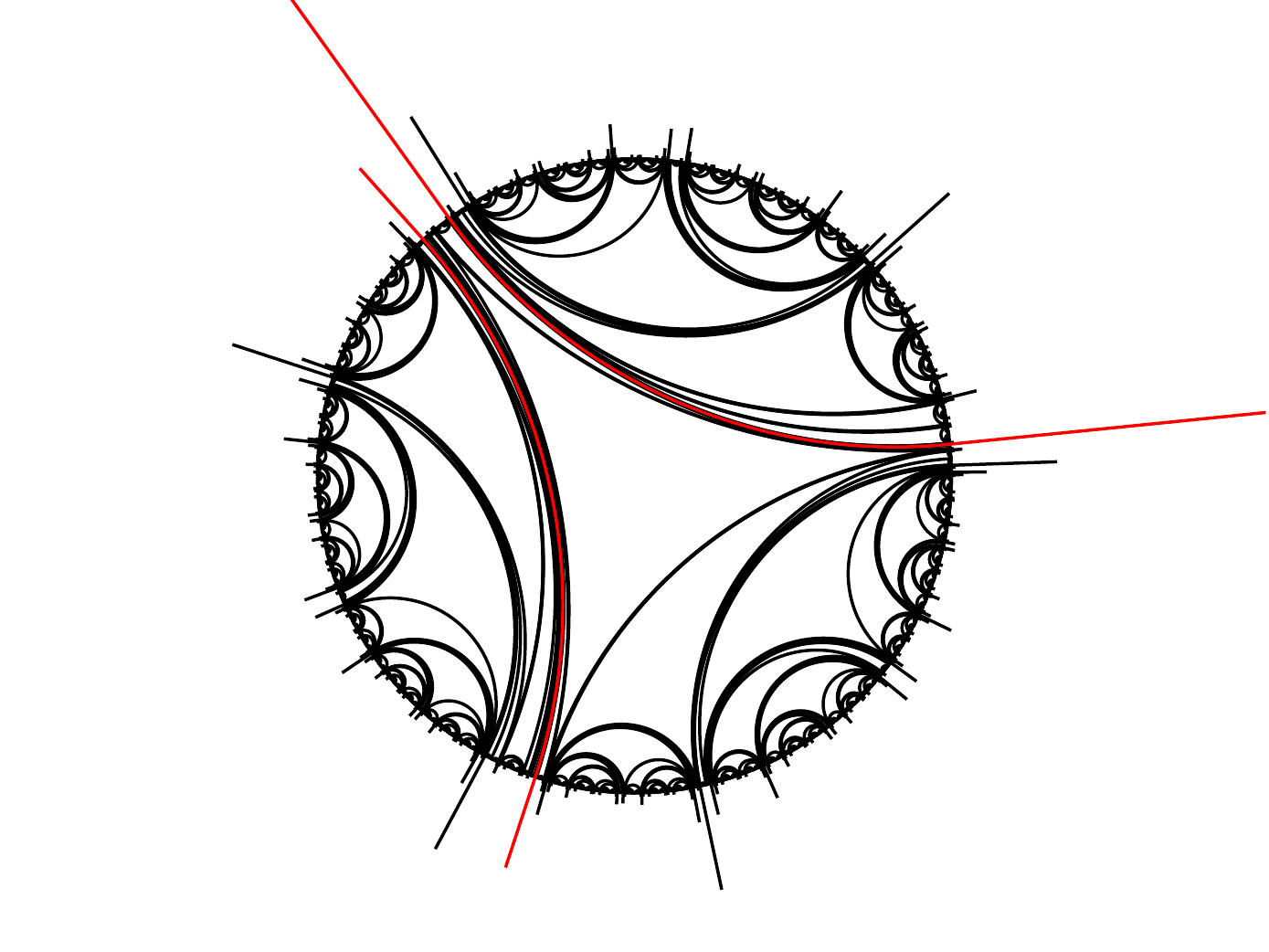} 
\caption{Simple dynamical elamination of degree $3$; critical leaves are in red}\label{dynamical_lamination}
\end{figure}

\begin{proposition}[Dynamical elamination]\label{proposition:dynamical_lamination}
Let $C$ be a degree $d$ critical set. Then there is a unique dynamical elamination $L$
generated by $C$.
\end{proposition}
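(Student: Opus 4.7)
The approach is backward iteration. Write $f(z) := z^d$. For any leaf $\mu$ with tips at angles $\theta_1,\dots,\theta_k$ and height $h$, define $f^{-1}(\mu)$ to be the collection of $d$ leaves obtained by lifting the vein and tips through the $d$ inverse branches of $z\mapsto z^d$: heights scale by $1/d$, and each angle $\theta_j$ lifts to $(\theta_j+2\pi i)/d$ for $i=0,\dots,d-1$. Set $L_0:=C$, $L_{n+1}:=L_n\cup f^{-1}(L_n)$, and $L:=\bigcup_{n\ge 0} L_n$. Uniqueness is then immediate: any dynamical elamination $L'$ generated by $C$ must contain $f^{-n}(C)$ for every $n\ge 0$ by the $d$-to-$1$ condition on $f|_{L'\setminus C}$, and can contain nothing else.

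For existence, I would verify the three elamination axioms in turn. (1) Non-crossing: the $d$ inverse branches of $f$ have pairwise-disjoint interiors, so lifting a non-crossing family remains non-crossing, and an induction starting from $L_0=C$ gives non-crossing for every $L_n$. (2) Properness: every leaf of $L_n\setminus L_{n-1}$ has height at most $H/d^{n-1}$, where $H$ is the maximum height of a leaf in $C$, and each $L_n$ is finite; so only finitely many leaves of $L$ lie above any given $\epsilon>0$. (3) Saturation: the critical-multiplicity convention is tailored precisely so that $C$ is itself saturated, and saturation is inductively preserved under backward iteration.

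The main obstacle will be (3), specifically at the interface where a newly-lifted leaf abuts a critical leaf of $C$ (or an older leaf whose lifts touch a critical vein). Two inverse branches of $f$ meet along the vein of each critical leaf, so a newly-lifted leaf with an endpoint on such a vein could appear to lack an elder sibling on one side unless one is provided by the critical leaf itself or by the corresponding lift on the opposite side. Verifying that the requirement that the critical multiplicities of $C$ sum to exactly $d-1$ is precisely what rules out any such gap, and carrying this through the induction in both the simple-critical and higher-multiplicity cases, is the technical heart of the argument.
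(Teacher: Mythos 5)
Your high-level strategy (define $L$ by backward iteration from $C$, then check the axioms by induction) matches the paper's, but the crucial mechanism — your definition of $f^{-1}(\mu)$ — is wrong, and this is not a matter of a missing verification step but of the construction actually producing the wrong leaves. You define the $d$ preimage leaves of $\mu$ by lifting all of its tips coherently through a single branch $\theta_j \mapsto (\theta_j + 2\pi i)/d$. But the $d$ inverse branches of $z\mapsto z^d$ are not globally defined on the circle, and in the presence of a critical set $C$ the partition of the $d^k$ preimage tips of a leaf into leaves is \emph{not} governed by a constant branch index $i$. Concretely, take $d=3$, $C_1$ with angles $\{0,1/3\}$, $C_2$ with angles $\{1/2,5/6\}$, and $\mu$ a leaf with angles $\{1/10,9/10\}$. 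Your formula with $i=1$ gives the candidate preimage leaf with angles $\{11/30,19/30\}$, which links $C_2=\{1/2,5/6\}$; so $L_1$ already fails the non-crossing condition. The correct preimage leaves here are $\{1/30,3/10\}$, $\{7/10,19/30\}$, and $\{11/30,29/30\}$, and in the last two the branch index $i$ is different on the two tips.

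This is exactly what the paper's construction is designed to handle. Pinching $S^1$ along $C$ produces $d$ disjoint circles, each of angular length $1/d$, and $z\mapsto z^d$ restricted to each pinched circle is an isomorphism onto $S^1$; one then pulls a leaf back through each of these $d$ isomorphisms to obtain $d$ well-defined preimage leaves. The branch index is allowed to change across the veins of $C$, because the pinched circles straddle those veins. Once the lift is defined this way, non-crossing, properness and saturation fall out essentially for free, and uniqueness is clear because any dynamical elamination generated by $C$ must contain exactly these pulled-back leaves. You correctly sensed that something delicate happens near the critical veins, but you located the difficulty in verifying saturation after the fact, when in fact it lies in the definition of the pullback itself; with the naive branch formula there is nothing to salvage by a cleverer saturation argument, because $L_1$ is not even a lamination.
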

\begin{proof}
Recall that the notation $S^1 \mod C$ denotes the result of 
pinching the unit circle along $C$. From the definition of a critical set,
$S^1 \mod C$ is the union of $d$ disjoint circles, each canonically
isomorphic to $\R/\frac{1}{d}\Z$ (with respect to the angle coordinates it inherits
from $S^1$). Thus the map $z \to z^d$ maps each of these circles
isomorphically to the unit circle. An extended leaf in $S^1 \mod C$
canonically pulls back to an extended leaf on the unit circle by taking the preimage
of the tips to be the tips of the preimage. We may therefore inductively construct $L$ as
the union of $L_n$ where $L_0=C$ and $L_j$ is obtained from $L_{j-1}$ by taking the
preimages of $L_j$ in $S^1 \mod C$ and pulling back to an elamination on $S^1$.
Uniqueness is clear.
\end{proof}

We refer to the preimages of the critical leaves as {\em precritical leaves},
and we say that the {\em depth} of a precritical leaf $P$ is the number of iterates
of the dynamical map which take it to some $C_i$.

\subsection{Realization}

Let $L$ be a degree $d$ dynamical elamination generated by $C$, and let $\Omega$ be the
Riemann surface obtained from $L$ by pinching. The map $z \to z^d$ induces a degree
$d$ proper holomorphic map $F$ from $\Omega$ to itself with $d-1$ critical points
counted with multiplicity, which are the endpoints of the tips of the $C$.

The {\em Realization Theorem} says that the action of $F$ on $\Omega$ is 
holomorphically conjugate to the action of some (unique) shift polynomial $f$ 
on its Fatou set.

\begin{theorem}[Realization]\label{theorem:realization}
Let $C$ be a degree $d$ critical set with dynamical elamination $L$ and 
associated Riemann surface $F:\Omega \to \Omega$. Then there is a unique conjugacy
class of degree $d$ polynomial $f$ in the shift locus for which $f|\Omega_f$ is
holomorphically conjugate to $F|\Omega$. 
\end{theorem}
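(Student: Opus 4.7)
The plan is to compactify $(\Omega,F)$ to a holomorphic self-map of the Riemann sphere of degree $d$ with a totally invariant superattracting fixed point, recognize this map as a polynomial in normal form, and verify that it lies in $\SS_d$ with dynamical elamination $L$.

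First I compactify $\Omega$. Its ``outer'' end (where $h\to\infty$) is inherited from $\E$ and is compactified by a single point $\infty$. At the ``inner'' end, for each small $\epsilon$ the level set $h^{-1}(\epsilon)$ is a finite disjoint union of circles (by properness of $L$), and since $L$ is backwards invariant under $z\mapsto z^d$ and generated by $C$, the number of such circles grows like $d^n$ as $\epsilon$ drops past successive layers of pre-critical leaves. The inverse limit $K$ of $\pi_0(h^{-1}(0,\epsilon))$ under inclusion is therefore a Cantor set; attach it to $\Omega$ with the natural topology to form $\hat\Omega$. By the Planar Lemma $\hat\Omega$ is a topological 2-sphere, and $F$ extends continuously to $\hat\Omega$, acting as the $d$-th power in the local coordinate at $\infty$ and as the one-sided shift on $d$ symbols on $K$.

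Next I promote $\hat\Omega$ to a Riemann surface biholomorphic to $\hat{\C}$. The complex structure on $\Omega$ extends holomorphically across $\infty$ via the local coordinate inherited from $\E$; the extension across $K$ is the heart of the matter. By uniformization, $\Omega$ embeds conformally into $\hat{\C}$ as an open planar region, and I need the complement (beyond $\infty$) to reduce to a Cantor set of points, one per element of $K$. This uses dynamical expansion: choose a neighborhood $W$ of $K$ in $\hat\Omega\setminus\{\infty\}$ disjoint from all critical points of $F$; on each component of $W$ the map $F^{-1}$ has $d$ disjoint branches, each strictly contracting the Poincar\'e metric of that component by the Schwarz Lemma. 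Iterating, the nested sequence of components of $h^{-1}(0,\epsilon_n)$ (with $\epsilon_n\to 0$) closing down on a given $K$-point has Poincar\'e-diameter tending to zero, and therefore collapses to a single prime end in the conformal embedding. Hence $\hat\Omega\cong\hat{\C}$ as Riemann surfaces, and the continuous extension of $F$ is holomorphic across $K$ by removable singularities, since $K$ is a totally disconnected compact set.

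Under this identification $F$ is a degree-$d$ holomorphic self-map of $\hat{\C}$ with $\infty$ a totally invariant fixed point of local degree $d$, hence a polynomial; normalizing by the affine change of coordinates from the start of \S~2 gives $f$ in normal form, well-defined up to the $\Z/(d-1)\Z$ ambiguity. Its critical points are the endpoints of veins of the leaves of $C$, whose heights multiply by $d$ under $F$ and so escape to $\infty$; thus $f\in\SS_d$, and tracing through the construction of \S\ref{subsection:dynamical_elamination} gives $\Lambda(f)=L$. For uniqueness, any shift polynomial $f'$ with $(\Omega_{f'},f')\cong(\Omega,F)$ produces, via that isomorphism and the B\"ottcher conjugacy near $\infty$, an affine relationship between $f$ and $f'$ preserving the normal form, giving the same conjugacy class. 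The main obstacle is the Poincar\'e-contraction step establishing $\hat\Omega\cong\hat{\C}$: everything else is combinatorial plumbing, but the collapse of $K$ to points genuinely requires the dynamical expansion of $F$ --- precisely the input that played the analogous role in the earlier proof that a shift polynomial has Cantor Julia set.
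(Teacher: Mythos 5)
Your overall route is genuinely different from the paper's: you attempt a direct conformal compactification of $\Omega$ and then recognize the resulting self-map of $\hat{\C}$ as a polynomial, whereas the paper works on Teichm\"uller space and finds the uniformizing embedding as the fixed point of a contracting skinning map. The direct approach is a sensible one, and the outer structure of your argument (compactify, uniformize, match up with the dynamical elamination, dispose of uniqueness by the normal form) is fine.

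However, the crux of your argument --- showing that once $\Omega$ is embedded in $\hat{\C}$ by Koebe planar uniformization, the complementary continuum associated to each end of the Cantor set collapses to a single point --- is not correctly established. You claim that iterating the Schwarz--Pick contraction of $F^{-1}$ shows the nested components of $h^{-1}(0,\epsilon_n)$ have ``Poincar\'e-diameter tending to zero,'' but this quantity is infinite for every $n$: each $V_n(k)$ contains an entire neighborhood of the end $k$, which has infinite distance from any interior point in the Poincar\'e metric of $V_n(k)$, of $\Omega$, or of any fixed ambient domain in which $\Omega$ sits (compare the punctured disk, where the hyperbolic distance to a puncture is infinite). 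Strict pointwise contraction by inverse branches also does not self-improve to uniform contraction without some compactness input, which is precisely what you do not have before the compactification is established. The right tool here is the extremal length / modulus estimate: around each end $k$ there is a nested sequence of essential annuli $A_n(k)$, the components of $h^{-1}(\epsilon_{n+1},\epsilon_n)$, and once $\epsilon_n$ drops below the minimum critical height the map $F$ carries $A_{n+1}(k)$ conformally and with degree one onto $A_n(F(k))$, so the moduli stabilize to a positive constant. Hence $\sum_n \mathrm{mod}(A_n(k))=\infty$, and the Gr\"otzsch inequality forces the nested intersection to be a single point. This is exactly what the paper alludes to with ``moduli of accumulating annuli around points of $J$,'' and replacing your Poincar\'e-diameter claim with this modulus argument would make the proof go through. (The eventual degree-one statement itself deserves a sentence of justification: it follows from $F$ having no critical points below the minimum critical height, together with the combinatorics of how many boundary circles the components of successive sublevel sets of $h$ have, which in turn comes from the nesting of the elamination.)

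A smaller imprecision: ``on each component of $W$ the map $F^{-1}$ has $d$ disjoint branches'' is not right as stated; the $d$ preimages of a given component of $W$ are distributed among the various components of $F^{-1}(W)$, and a priori some of these restrictions could be multi-sheeted coverings rather than single-valued branches. Again this is resolved by the same degree-one observation, but you should not assert it without argument.
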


Essentially the same theorem is proved by DeMarco--McMullen \cite{deMarco_McMullen},
Thm.~7.1 although in different language, and with quite a different proof. 
One distinctive feature of our proof 
of Theorem~\ref{theorem:realization} is that it finds the desired embedding of 
$\Omega$ in $\CP^1$ by a rapidly convergent algorithm; we expect this
might be useful e.g.\/ for computer implementation. 

\begin{proof}
The Riemann surface $\Omega$ has one isolated puncture (corresponding to $\infty$)
and a Cantor set $J$ of ends (the `image' of the unit circle under iterated
cut-and-paste along $\partial^- L$). The map $F$ extends holomorphically
over the isolated puncture; we claim that it also extends (uniquely, holomorphically) 
over $J$. The resulting extension will be a degree $d$ holomorphic self-map 
from a sphere to itself, which is conjugate to a polynomial.

We now explain how to extend the dynamics of $F$ over $J$ holomorphically.
Let $X$ be the subset of $\Omega$ consisting of points with height $\le t$
where $t$ is less than the height of any critical leaf, and let $Y$ be the
closure of $X-F^{-1}(X)$. Then $Y$ is a
(typically disconnected) compact planar surface with outer boundary 
$\partial^+Y:= \partial X$, and inner boundary 
$\partial^-Y:= \partial Y - \partial^+Y$. The map $F: \partial^-Y \to \partial^+Y$
is a $d$-fold covering map for which every component maps homeomorphically
to its image; thus we may define
$F_1,\cdots,F_d: \partial^+Y \to \partial^- Y$ to be branches of
$F^{-1}$ with disjoint images whose union is $\partial^- Y$.

Suppose that $\partial^+Y=\partial X$ has $e$ components. Let $D$ denote the
disjoint union of $e$ copies of the unit disk $\D$.
We would like to find a holomorphic embedding $\psi:X \to D$, so that 
$J:=D-\psi(X)$ is a Cantor set, and so that $F$ (or, really, its conjugate by
$\psi$) extends holomorphically over $J$.

Let $\Teich$ denote the Teichm\"uller space of holomorphic embeddings 
$\psi: Y \to D$ taking components of $\partial^+Y$ to components of $\partial D$, 
and normalized to take fixed values on three marked points on each component.
We define a {\em skinning map} $\sigma:\Teich \to \Teich$ as follows.
Given $\psi$, we cut out $D - \psi(Y)$ and sew in $d$ copies of 
$D$ by gluing their boundaries to $\psi(\partial^-Y)$ along the identifications
$$\partial D \xrightarrow{\psi^{-1}} \partial^+ Y \xrightarrow{F_j} \partial^- Y
\xrightarrow{\psi} \psi(\partial^-Y)$$
We then uniformize the resulting surface $D'$
to obtain a holomorphic identification $D' \to D$,
and the restriction of this uniformization to $Y$ (which we identify with its
image in $D'$ under $\psi$) is $\sigma(\psi)$.
The skinning map is holomorphic, and therefore distance non-increasing in the
Teichm\"uller metric. In fact it is evidently strictly distance decreasing; 
furthermore, orbits are easily seen to be bounded. Thus $\sigma$ is
uniformly strictly distance decreasing, and  
there is a (unique) fixed point (actually convergence to the fixed
point is easy to see directly by considering moduli of accumulating 
annuli around points of $J$).

By construction, this fixed point
gives the desired embedding of $X$ and extension of $F$.
\end{proof}

We denote by $\DL_d$ the space of degree $d$ dynamical elaminations, thought of as
a subspace of $\EL$. Theorem~\ref{theorem:realization} produces a continuous inverse
to the butcher map $\Phi:\SS_d \to \EL$ called the {\em realization map}
$\Psi:\DL_d \to \SS_d$; in particular, the spaces $\SS_d$ and $\DL_d$ are homeomorphic.

The location of the tips of the critical leaves define local holomorphic coordinates
on $\DL_d$ giving it the structure of a complex manifold. With respect to these
coordinates, $\Phi$ and $\Psi$ are holomorphic; thus $\DL_d$ and $\SS_d$ are isomorphic as
complex manifolds.

\subsection{Squeezing}

There is a free proper $\R$ action on $\DL_d$ which simultaneously multiplies
the heights of the critical leaves by some fixed positive real number $e^t$. 
We call this transformation {\em squeezing}, and refer to the $\R$ action
as the {\em squeezing flow}. 

Since the squeezing flow is (evidently) proper, it gives $\DL_d$ the structure of
a global product:

\begin{corollary}
Each $\DL_d$ is homeomorphic to a product $\DL_d = X_d \times \R$ where $X_d$ is
a real manifold of dimension $2d-3$.
\end{corollary}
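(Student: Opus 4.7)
The plan is to exhibit $\DL_d$ as a trivial principal $\R$-bundle over a manifold $X_d$ of real dimension $2d-3$, using the squeezing flow as the $\R$-action. Since the action is free and proper, the quotient manifold theorem applies, and since $\R$ is contractible any principal $\R$-bundle is trivial.

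First I would verify freeness of squeezing: if $t\cdot\Lambda=\Lambda$, then each critical leaf height $h_i$ satisfies $e^t h_i=h_i$, forcing $t=0$ because critical heights are strictly positive. Next I would make properness precise. Let $H\colon \DL_d\to(0,\infty)$ be the total critical height, $H(\Lambda):=\sum_i h_i(\Lambda)$ taken over the critical leaves of $\Lambda$ counted with multiplicity. This is continuous in the collision topology (push-over preserves each individual height and amalgamation merges two leaves of a common height into one of the same height, so the sum is unchanged), positive on $\DL_d$, and satisfies the equivariance $H(t\cdot\Lambda)=e^tH(\Lambda)$. Suppose $\Lambda_n$ and $t_n\cdot\Lambda_n$ both lie in a compact $K\subset\DL_d$. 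Then $H$ is bounded above and below by positive constants on $K$, so $e^{t_n}=H(t_n\cdot\Lambda_n)/H(\Lambda_n)$ is bounded away from $0$ and $\infty$, hence $t_n$ has a convergent subsequence. This establishes properness of the action.

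Given freeness and properness, the quotient manifold theorem yields that $X_d:=\DL_d/\R$ is a real manifold of dimension $\dim_{\R}\DL_d-1=2(d-1)-1=2d-3$, and that the quotient map is a principal $\R$-bundle. Concretely I would realize $X_d$ as the cross-section $H^{-1}(1)$: by equivariance each orbit meets this level set in the unique point obtained by squeezing $\Lambda$ by $-\log H(\Lambda)$. The map
\[
X_d\times\R\longrightarrow \DL_d,\qquad (\Lambda,t)\longmapsto t\cdot\Lambda,
\]
is then a homeomorphism with continuous inverse $\Lambda\mapsto\bigl((-\log H(\Lambda))\cdot\Lambda,\ \log H(\Lambda)\bigr)$.

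The only real content is properness; the rest is standard. The subtle point to watch is continuity of $H$ across collisions of critical leaves — which, as noted, is immediate because neither push-over nor amalgamation alters the multiset of heights of critical leaves. Given that, the product structure follows purely formally from freeness, properness, and the contractibility of $\R$.
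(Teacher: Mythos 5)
Your argument is correct and matches the paper's approach: both exhibit the squeezing flow as a free, proper $\R$-action on the manifold $\DL_d$ and cut it by a global cross-section given by a continuous, $\R$-equivariant, real-valued function of the critical heights. The paper takes the maximum critical height rather than your sum $H$, but either choice works for the same reason (push-over and amalgamation both preserve the multiset of critical heights), so the two proofs are essentially identical.
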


For concreteness, we may think of $X_d$ as the subspace of $\DL_d$ where
the largest critical height is equal to $1$.

\subsection{Rotation}

If $P$ is a leaf in $L$, we let $e^{i2\pi t}P$ denote the result of rotating $P$
anticlockwise through $t$, mod leaves of greater height. This makes sense
unless $P$ collides with a leaf of the same height. If $P$ and $Q$ are leaves of
different height, the operations of rotating $P$ and rotating $Q$ commute. 

If $L$ is a dynamical elamination of degree $d$
with distinct critical leaves, let $L_j$ be the critical leaf $C_j$ and its preimages.
Suppose no two critical leaves have heights whose ratio is a power of $d$; we say
$L$ has {\em generic heights}. Then for
a vector $s:=s_1,\cdots s_{d-1}$ of real numbers we can simultaneously 
rotate all the leaves of each $L_j$ of height $h$ through angle $hs_j$, mod leaves of
greater height; since leaves of the same height are all rotated through the same angle, they
never collide and this operation is well-defined. 
Denote the result by $\FF_s L: = \cup_j e^{i2\pi hs_j}L_j$. 

\begin{lemma}[Torus orbits]\label{lemma:rotation_torus}
If $L$ is a degree $d$ dynamical elamination with generic heights $h(C)$, 
then $\FF_s L \in \DL_d$.
Furthermore the orbit map $\R^{d-1} \to \DL_d$ factors through a torus $T_L:=\R^{d-1}/\Gamma_L$
where $\Gamma_L$ is contained in $d^{-n}h(C)^{-1}\Z^{d-1}$ for some $n$. 
\end{lemma}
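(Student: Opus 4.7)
The plan is to prove the two clauses of the lemma separately: first, that $\FF_s L \in \DL_d$ for every $s \in \R^{d-1}$, and second, that the stabilizer $\Gamma_L := \{s : \FF_s L = L\}$ is a full-rank discrete subgroup of $\R^{d-1}$ contained in $d^{-n} h(C)^{-1} \Z^{d-1}$ for some $n$.

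For the first clause, I would exploit generic heights: since a depth-$n$ leaf of $L_j$ has height $h(C_j) d^{-n}$, and generic heights precludes $h(C_j)/h(C_k) = d^{n-m}$ for $j \neq k$, every leaf of $L$ at a given height lies in a single $L_j$. Thus $\FF_s$ rotates all same-height leaves by a common angle, with no intra-height collisions; collisions between leaves of different $L_j$'s (necessarily at different heights) are exactly the push-over events of the collision topology. To see that $\FF_s L$ is dynamical, I would verify that rotation commutes with $z \to z^d$: a leaf at height $h$ in $L_j$, rotated by $hs_j$, has image under $z \to z^d$ at height $dh$ rotated by $d \cdot hs_j = (dh)s_j$, exactly the prescription of $\FF_s$ at height $dh$. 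The rotated critical set remains a critical set since tip differences mod $1/d$ are preserved under rotation.

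For the torus clause, I would sandwich $\Gamma_L$ between two explicit lattices. \emph{Lower bound:} if $h(C_j) s_j \in \Z$ for every $j$, then the rotation of depth-$n$ leaves of $L_j$ is by $h(C_j) d^{-n} s_j \in d^{-n}\Z$, an integer multiple of the angular spacing $1/d^n$ of depth-$n$ tips. This permutes depth-$n$ tips; moreover, the depth-$0$ rotation of each $C_k$ is an integer number of turns, i.e.\ the identity, so the sheet structure of $S^1 \mod C$ is preserved exactly and all leaf pairings survive. Thus $\bigoplus_j h(C_j)^{-1}\Z \subseteq \Gamma_L$, which is already cocompact in $\R^{d-1}$. \emph{Upper bound:} if $s \in \Gamma_L$, then since generic heights distinguishes each $C_j$ by its height, $\FF_s$ must preserve each $C_j$ individually as a leaf; hence $h(C_j) s_j$ lies in the rotational symmetry group of the tip set of $C_j$. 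Since all tips of any critical leaf are congruent mod $1/d$, that symmetry group is contained in $d^{-1}\Z/\Z$, giving $s \in d^{-1} h(C)^{-1}\Z^{d-1}$. Hence the containment holds with $n=1$.

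The orbit map $s \mapsto \FF_s L$ is continuous (by the definition of the collision topology and the obvious continuity of rotations on tips) and $\R^{d-1}$-equivariant with stabilizer $\Gamma_L$, so it factors through $T_L = \R^{d-1}/\Gamma_L$, which is a torus because $\Gamma_L$ is full-rank and discrete. The hardest step is the verification in the first clause that push-overs between different $L_j$'s preserve both saturation and the dynamical relation at the instant of collision. This should follow because push-over is defined to be continuous in the collision topology, and the $d$-fold symmetry of $z \to z^d$ on either side of a taller leaf forces the rearrangement at the moment of crossing to be precisely what the dynamics demands, so the dynamical elamination structure persists.
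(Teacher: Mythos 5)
Your handling of the first clause—the commutation $(e^{i\theta}P)^d = e^{i\theta d}P^d$ mod leaves of greater height—is essentially the paper's one-line argument, and is fine. But the second clause (the lattice computation) contains a genuine error that affects both your claimed lower bound and your claimed upper bound.

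The error is that you treat ``rotating $C_j$ through angle $h(C_j)s_j$'' as an ordinary rotation of $S^1$, so that rotating through an integer is the identity and the symmetry group of $C_j$'s tip set sits in $d^{-1}\Z/\Z$. But by definition the rotation is taken \emph{mod leaves of greater height}: $C_j$ lives on a single component of $S^1$ pinched along all taller leaves (which include not only $C_1,\dots,C_{j-1}$ but also their precritical leaves down to some depth), and the rotation wraps around \emph{that} component, whose total angular length $\ell_j$ is in general a proper fraction not dividing $1$. Rotating through an integer therefore does \emph{not} return $C_j$ to itself, which kills the lower bound $\bigoplus_j h(C_j)^{-1}\Z \subseteq \Gamma_L$; and the rotations fixing $C_j$ need not lie in $d^{-1}\Z$, which kills the upper bound with $n=1$. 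A concrete counterexample in degree $3$: take $C_1$ with tips $\{1/7,\,1/7+1/3\}$ at height $1$ and $C_2$ at height in $(1/9,1/3)$, placed so its tips lie in the component of $S^1$ pinched along $C_1$ and its depth-one preimages having length $4/9$. One computes that the tips of $C_2$ are antipodal there, so the minimal period in $s_2$ is $(2/9)\,h(C_2)^{-1}$. This does not divide $h(C_2)^{-1}$ (the ratio is $9/2$), and it is not in $d^{-1}h(C_2)^{-1}\Z$ (since $2/9 \notin \tfrac13\Z$); the smallest $n$ that works here is $n=2$. In general $n$ grows with the number of depths at which taller precritical leaves occur, so there is no uniform $n=1$ bound.

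The paper's proof goes a different route, and the missing ingredient in yours is precisely the component length $\ell_j$: since the tips of each taller leaf (critical or precritical, down to the relevant depth) differ by integer multiples of $d^{-n}$, pinching $S^1$ along all of them produces components whose lengths are positive integer multiples of $d^{-n}$; the $s_j$-period of $C_j$ is then $\ell_j h(C_j)^{-1}$ divided by the order of the rotational symmetry of $C_j$'s tip set within that component, and one checks this still lands in $d^{-n}h(C_j)^{-1}\Z$. That is where the dependence of $n$ on $L$ comes from, and it cannot be removed.
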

\begin{proof}
By induction, for each precritical leaf $P$ we have 
$(e^{i\theta}P)^d = e^{i\theta d}P^d$ mod leaves of greater height. Thus $\FF_s L$ is
a degree $d$ dynamical elamination.

For each critical leaf $C_j$ the angles of $C_j$ vary continuously in a component of
$S^1$ mod leaves of greater height. Since the angles of these leaves of greater height
all differ by multiples of $d^{-n}$ for some fixed $n$, the length of this component 
is a multiple of $\R/d^{-n}\Z$. The lemma follows.
\end{proof}

\section{Degree 2}

Our goal in the sequel is to investigate the topology and combinatorics of $\SS_d$.
As a warm-up, and in order to introduce the main ideas in a relatively clean context,
we describe in the next few sections the special cases of degrees $2$, $3$ and $4$.
After developing the theory of the past few sections, the case of degree $2$ 
is almost a triviality.

\begin{theorem}[Douady--Hubbard \cite{Douady_Hubbard_1}]
The space $\SS_2$ is holomorphically equivalent to a punctured disk.
\end{theorem}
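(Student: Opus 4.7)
The plan is to bypass the classical Douady--Hubbard argument about connectivity of $\M$ and instead deduce the theorem directly from the machinery developed so far, by computing the space $\DL_2$ of degree $2$ dynamical elaminations and invoking the Realization Theorem~\ref{theorem:realization} (together with the fact that the butcher and realization maps are inverse biholomorphisms).

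First I would identify a degree $2$ critical set. By definition, a critical leaf in degree $d=2$ has its tips equal mod $2\pi d^{-1} = \pi$, so the only candidates are extended leaves whose two tips sit at antipodal points $e^{i\theta}$ and $e^{i(\theta+\pi)}$ of $S^1$, joined by a diameter (the vein), both tips at the same height $h>0$. Such a single leaf already has ordinary multiplicity $1 = d-1$, so it is by itself a critical set; and by Proposition~\ref{proposition:dynamical_lamination} this critical leaf $C$ generates a unique dynamical elamination $L(C) \in \DL_2$. Conversely every element of $\DL_2$ arises this way.

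Next I would parameterize $\DL_2$ explicitly. The critical leaf $C$ is completely encoded by one of its tips $\zeta \in \E = \{|z|>1\}$, with the other tip being $-\zeta$; since the leaf is unordered, the assignment $\zeta \mapsto C$ identifies $\DL_2$ with the quotient $\E/(\zeta \sim -\zeta)$. The holomorphic structure on $\DL_2$ promised in \S~\ref{formal_shift_space} (from the location of the tips of the critical leaves) is exactly the one for which $\zeta$ is a local coordinate modulo the $\pm 1$ identification; equivalently, the invariant $w=\zeta^2$ is a global holomorphic coordinate identifying $\DL_2$ biholomorphically with $\E$. Composing with $w\mapsto 1/w$ sends $\E$ biholomorphically to the punctured open unit disk $\{0<|w|<1\}$.

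Finally I would assemble the pieces: Theorem~\ref{theorem:realization} gives a holomorphic bijection $\Psi:\DL_2 \to \SS_2$ with holomorphic inverse $\Phi$, so $\SS_2 \cong \DL_2 \cong \E \cong \{0<|w|<1\}$ as complex manifolds. There is essentially no obstacle here: both halves of the Realization Theorem and the coordinate description of $\DL_d$ are already in place, and the point is simply that in degree $2$ there is a single critical leaf, no genericity condition to worry about, and no interaction of critical orbits—so the description of $\DL_2$ reduces to a one-line computation. The only mild subtlety is remembering the $\pm 1$ quotient coming from the antipodal symmetry of the unique critical leaf, which is what produces the puncture rather than a full disk (and which, on the $\SS_2$ side, corresponds to the usual $\Z/(d-1)\Z = 0$ ambiguity being trivial while the critical leaf itself has a $\Z/2$ automorphism).
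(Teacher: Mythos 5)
Your proposal is correct and follows essentially the same route as the paper: characterize $\DL_2$ as the space of single critical leaves (parameterized by a tip $\zeta\in\E$ up to $\pm 1$), observe that $\zeta\mapsto\zeta^2$ biholomorphically identifies $\E/\{\pm1\}$ with $\E$ (hence a punctured disk), and transfer this to $\SS_2$ via the butcher/realization biholomorphism. The paper's proof is exactly this computation, stated more tersely.
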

\begin{proof}
A degree $2$ dynamical elamination $L$ is generated by a single (necessarily simple)
critical leaf $C$. The tips of $C$ are of the form $\ell(z)$ and $\ell(-z)$ for some
$z \in \E$. Since every other leaf of $L$ has smaller height than $C$, the number $z^2$
is a continuous function of $\DL_2$, and conversely we can recover $C$ and
therefore $L$ from $z^2$. Hence $\DL_2$ is holomorphically isomorphic to the
quotient of $\E$ by $\pm 1$.
\end{proof}

\begin{corollary}
The Mandelbrot Set $\M$ (i.e.\/ the complement of $\SS_2$ in $\C$) is connected.
\end{corollary}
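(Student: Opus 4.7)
The plan is to convert the theorem's statement that $\SS_2$ is holomorphically equivalent to a punctured disk into the topological fact that its complement in $\C$ is connected. I would do this by filling in the end at $\infty$ and then applying a standard planar-topology argument.

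First I would check that $\{\infty\}$ is an isolated point of $S^2 \setminus \SS_2$. For $|c|$ sufficiently large (say $|c| > 2$) the unique critical orbit $0, c, c^2 + c, \dots$ of $z \mapsto z^2 + c$ escapes to infinity, so $\SS_2$ contains a punctured neighborhood of $\infty$ in $S^2$. In particular, $\infty$ is an isolated end of $\SS_2 \subset S^2$, and under the holomorphic equivalence with a punctured disk supplied by the theorem, it corresponds to exactly one of the two ends of that punctured disk. Filling in this single end, the space $\SS_2 \cup \{\infty\}$ becomes homeomorphic to a genuine open disk, and is in particular simply connected.

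Next, because $\M \subset \C$ is closed and bounded, it is compact, and therefore $S^2 \setminus \M = \SS_2 \cup \{\infty\}$, which by the previous step is simply connected. I would then invoke the standard planar-topology fact that a compact $K \subset S^2$ with simply connected complement must be connected: if one could write $K = K_1 \sqcup K_2$ with $K_1, K_2$ disjoint, nonempty and compact, then a small smooth loop $\gamma \subset S^2 \setminus K$ separating $K_1$ from $K_2$ would bound a disk in $S^2$ on each side, but each of those disks meets $K$, so $\gamma$ is not nullhomotopic in $S^2 \setminus K$. Applied to $K = \M$, this contradicts simple-connectedness and yields the corollary.

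There is no real obstacle in this argument; it is essentially a formality once the theorem is in hand. The only point requiring a moment of care is verifying that the end at $\infty$ is isolated, so that filling it in really produces a full disk and not something with residual boundary structure, and this follows from the elementary escape estimate for $|c|$ large.
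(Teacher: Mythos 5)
The paper states this corollary without proof, so there is no paper argument to compare against; your approach is the natural one and is essentially correct. The escape estimate for $|c| > 2$, the identification of $\infty$ as an isolated end, and the observation that filling in one end of a topological cylinder yields an open disk (hence $S^2 \setminus \M$ is simply connected) are all fine.

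The one loose point is in your sketch of the planar-topology fact you invoke. You assert that for any decomposition $K = K_1 \sqcup K_2$ into disjoint nonempty compacta there is a single smooth Jordan curve $\gamma \subset S^2 \setminus K$ separating $K_1$ from $K_2$, i.e.\/ with $K_1$ and $K_2$ in the two complementary disks. This is false in general: take $K_1$ to be the unit circle and $K_2 = \{0\} \cup \{2\}$; any Jordan curve disjoint from $K_1$ lies entirely inside or entirely outside the unit circle, and so cannot have $\{0,2\}$ together on one side. (Your conclusion that $S^2 \setminus K$ is not simply connected is still true in that example, of course, just not via a single separating circle.) Since the corollary does not let us assume anything about the internal structure of $\M$ in advance, this cannot be patched by appealing to special features of $\M$. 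The clean fix is Mayer--Vietoris applied to the open cover $S^2 = (S^2 \setminus K_1) \cup (S^2 \setminus K_2)$: since $H_2(S^2\setminus K_i)=0$ for any proper open subset of $S^2$, the connecting homomorphism $\Z \cong H_2(S^2) \to H_1\bigl(S^2 \setminus (K_1\cup K_2)\bigr)$ is injective, so $H_1(S^2\setminus K)\neq 0$, contradicting simple-connectivity. (Equivalently, one may cite Alexander duality, $H_1(S^2\setminus K)\cong \tilde H^0(K)$.) With that substitution the proof is complete.
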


\section{Degree 3}\label{section:degree_3}

\subsection{The Tautological Elamination}\label{subsection:tautological_lamination}

Throughout this section we refer to the {\em angles} of a leaf $P$ of an 
elamination as the arguments of the tips divided by $2\pi$; thus angles take values
in the circle $S^1=\R/\Z$.
 
For some small $\epsilon>0$ and angles $t,s \in S^1$
let $L(t,s)$ be the degree $3$ dynamical
elamination with simple critical leaves $C_1,C_2$ where $C_1$ has height $1$ and
angles $\lbrace t,t+1/3\rbrace$, and $C_2$ has height $1-\epsilon$ and
angles $\lbrace s,s+1/3\rbrace$. Note that this forces $s\in (t+1/3,t+2/3)$.

If we fix $t$ and vary $s$ in $(t+1/3,t+2/3)$, 
then whenever $3^ns$ is equal to $t$ or $t+1/3$, the
leaf $C_2$ collides with a leaf $P$ of $L(t,s)$ which is a depth $n$ preimage of $C_1$.
We define an elamination $\Lambda_T(t)$ whose leaves are the union of the leaves $P^3$
over all $P$ in all $L(t,s)$ of this kind.

\begin{example}
Let $t=0$ and $s=5/9$. Thus $C_1$ has angles $\lbrace 0,1/3\rbrace$ and $C_2$ has
angles $\lbrace 5/9,8/9\rbrace$. There is a unique leaf $P$ with angles
$\lbrace s=5/9, s'\rbrace$ which collides with $C_2$
for which $P^9=C_1$ and neither $P$ nor $P^3$ crosses
$C_1$ or $C_2$ (actually, because $P$ is saturated by $C_2$, it has angles
$\lbrace s=5/9,s',8/9\rbrace$ but we ignore this point, since the tips with angles
$5/9$ and $8/9$ become equal in $P^3$ and it is the leaf $P^3$ that is in $\Lambda_T(0)$). 
The leaf $P^3$ has angles $\lbrace 3s=2/3, 3s'\rbrace$; since
$9s' = 1/3 \mod \Z$, for $P^3$ not to cross $C_1$ or $C_2$ we must have $3s'=7/9$.
Thus, in order for $P$ not to cross $C_1$ or $C_2$ we must have $s'=16/27$. 
See Figure~\ref{tautological_leaf_example}.

\begin{figure}[ht]
\centering
\includegraphics[scale=0.5]{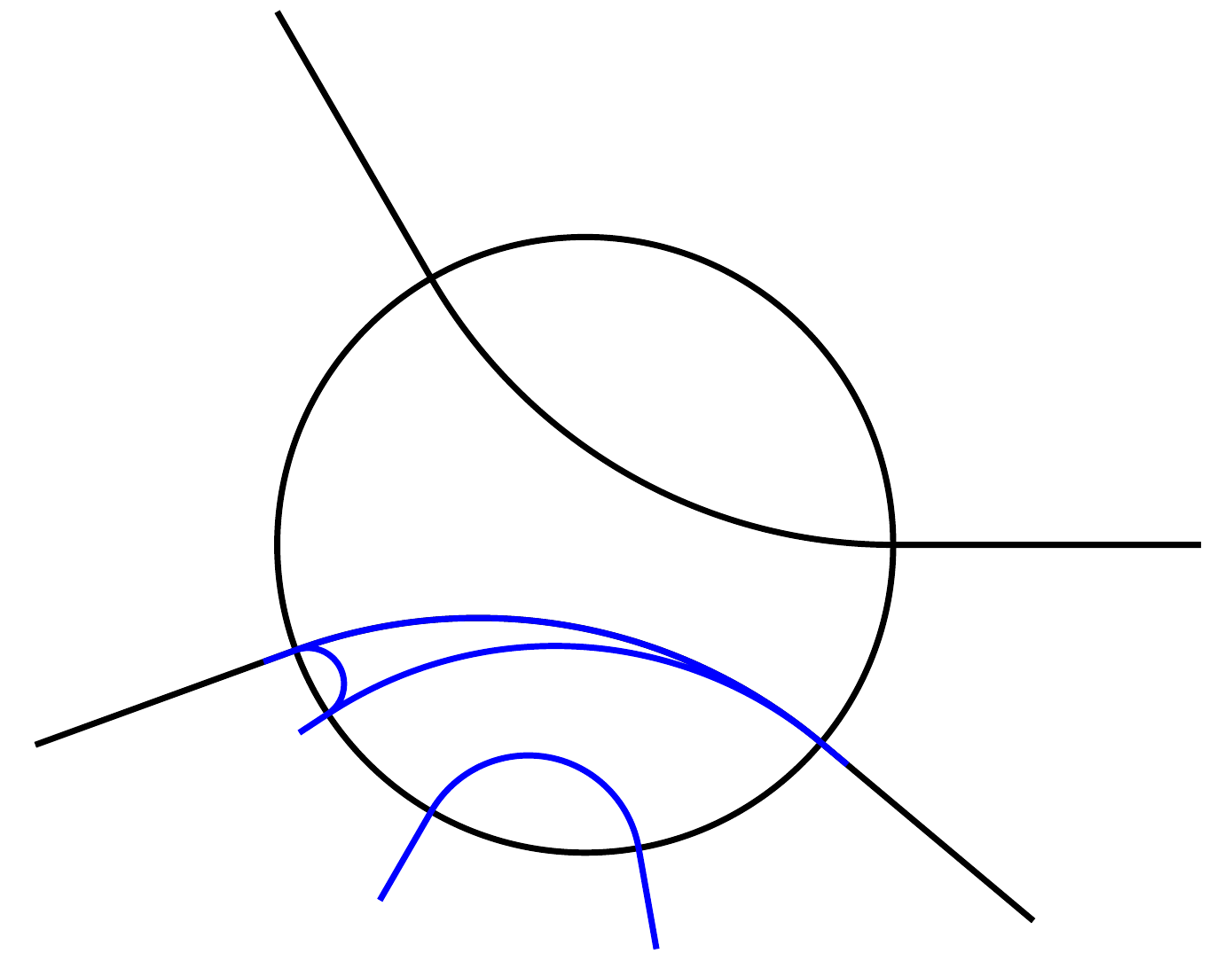} 
\caption{$P$ and $P^3$ (in blue) have angles $\lbrace 5/9,16/27,8/9\rbrace$ and
$\lbrace 2/3,7/9\rbrace$.}\label{tautological_leaf_example}
\end{figure}

The leaf $P^3$ with height $1/3$ and angles $\lbrace 2/3,7/9\rbrace$ is therefore a
leaf of $\Lambda_T(0)$.
\end{example}

\begin{definition}[Tautological Elamination]\label{definition:tautological}
Fix $t \in S^1$. The {\em tautological elamination} $\Lambda_T(t)$ 
is the union of $P^3$ over all leaves $P\in L(t,s)$ in the preimage of $C_1$ over
all values of $s$ at which $C_2 \in L(t,s)$ collides with $P$.
\end{definition}

If $P \in L(t,s)$ is a depth $n$ preimage of $C_1$ that collides with $C_2$,
we refer to its image $P^3 \in \Lambda_T(t)$ as a
{\em depth $(n-1)$ leaf of $\Lambda_T(t)$}.

\begin{proposition}\label{proposition:tautological_lamination}
For all $t$, $\Lambda_T(t)$ is an elamination. Furthermore, $\Lambda_T(t+s)=e^{i2\pi hs}\Lambda_T(t)$
for any $t,s$.
\end{proposition}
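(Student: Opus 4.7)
The plan is to verify the three defining axioms of an elamination --- lamination, properness, and saturation --- for $\Lambda_T(t)$, and then to read off the rotational equivariance from a direct computation of how the construction transforms under $t\mapsto t+s$.

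First I would settle heights and properness. If $P\in L(t,s)$ is a depth-$n$ preimage of $C_1$ (that is, $P$ maps to $C_1$ after $n$ applications of the cubing map) then $P$ sits at height $3^{-n}$, and hence $P^3$ has height $3^{1-n}$. Thus $\Lambda_T(t)$ is supported on the discrete set of heights $\{3^{1-n}\}_{n\geq 1}$. For each fixed $n$ the admissible $s\in(t+1/3,t+2/3)$ with $3^n s \in \{t,t+1/3\}$ number $O(3^{n-1})$, and each one contributes a single leaf $P^3$, since $P$ itself is uniquely determined by the shared tip angle via Proposition~\ref{proposition:dynamical_lamination}. In particular only finitely many leaves of $\Lambda_T(t)$ have height at least any given $\epsilon>0$.

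The main obstacle is the lamination (non-crossing) axiom, because leaves of $\Lambda_T(t)$ produced at different $s$-values live a priori in distinct dynamical elaminations $L(t,s_1)$, $L(t,s_2)$, and crossings between them are not immediately ruled out. The key observation I would exploit is that the angles of $P^3$ depend only on $t$ and on the combinatorial branch of the preimage tree occupied by $P$, not on the precise value of $s$: the tips of $P^3$ are $\{3s,3s'\}$ where $3s,3s'$ are depth-$(n{-}1)$ preimage angles of $\{t,t+1/3\}$ under tripling, and the pairing of $3s$ with $3s'$ is fixed by the dynamics. This lets me realize the family $\{P^3\}$ simultaneously as leaves of a single ambient elamination generated by $C_1$ alone --- formally, the limit of $L(t,s)$ as $\epsilon\to 0$ and $C_2$ is absorbed into $C_1$, so that its interference with the lower preimages disappears. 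Non-crossing and the saturation axiom for $\Lambda_T(t)$ are then inherited from this ambient elamination, using also that each individual $L(t,s)$ is already saturated by construction.

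Equivariance is then a direct computation. Replacing $t$ by $t+s$ rotates the tips of $C_1$ by $s$, so the admissible parameter values $s''$ with $3^n s''\in\{t+s,\,t+s+1/3\}$ are obtained from the old admissible values by translation through $s/3^n$. Consequently the depth-$n$ preimage $P$ of $C_1$ rotates by $s/3^n$ in angle (its combinatorial branch in the preimage tree being preserved), so $P^3$ rotates by $3\cdot s/3^n=s\cdot 3^{1-n}$, which equals $s\cdot h$ where $h=3^{1-n}$ is precisely the height of $P^3$. Since every leaf of $\Lambda_T(t)$ rotates by the amount $sh$ appropriate to its height, we obtain $\Lambda_T(t+s)=e^{i2\pi h s}\Lambda_T(t)$, as claimed.
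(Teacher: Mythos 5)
Your properness analysis and the equivariance computation at the end are both essentially correct, and the latter matches the paper's conclusion. But the heart of the proposition is the non-crossing (lamination) axiom, and there your argument has a real gap: the ``single ambient elamination generated by $C_1$ alone, obtained as the limit of $L(t,s)$ as $\epsilon\to 0$'' is not a well-defined object in the sense you need. As $\epsilon\to 0$ the leaf $C_2$ rises to the same height as $C_1$ but does \emph{not} collide with it, since $s$ stays in the open interval $(t+1/3,t+2/3)$; so $C_2$ is not ``absorbed'' and you are left with a one-parameter family of elaminations $L(t,s)$ rather than a single one. More to the point, the depth-$(n{+}1)$ preimages of $C_1$ at height $3^{-n-1}$ genuinely move as $s$ varies (with $C_2$ fixed at height $1-\epsilon$), because they are pushed over the depth-$k$ preimages of $C_2$, which at that fixed height are \emph{taller}. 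So ``the combinatorial branch of the preimage tree occupied by $P$'' is itself an $s$-dependent notion, and the claim that the angles of $P^3$ depend only on $t$ and a branch independent of $s$ is not self-evident; this is exactly what needs proof.

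The paper's proof resolves this by an inductive height-shrinking device you did not identify. To understand the depth-$(n{+}1)$ collisions, one does not keep $C_2$ at height $1-\epsilon$; one first lowers $C_2$ to height $3^{-n}(1-\epsilon)$. At that height the depth-$(n{+}1)$ preimages of $C_1$ (height $3^{-n-1}$) are taller than every preimage of $C_2$, so as $C_2$ moves these preimages are pushed over nothing but $C_2$ itself; and since $C_2$'s tip angles differ by $1/3$, this push-over leaves $P^3$ unchanged. Meanwhile, $C_2$ at this low height is confined to a single component of $S^1$ pinched along the depth-$\le n$ preimages of $C_1$, which is what forces the new leaves $P^3$ to be disjoint from each other and from the leaves of lower depth. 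The equivariance argument then also runs through this picture: one lowers $C_2$ to the height of the $P$ it has just collided with, and rotates $C_1$ while co-rotating $C_2$ at speed $3^{-n}$ so the collision persists. Your proof as written neither constructs a valid ambient elamination nor supplies the height comparison that makes the push-overs harmless, so the lamination and saturation axioms are not actually established.
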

\begin{proof}
As we vary $C_2$ fixing its height, the preimages of $C_1$ are occasionally 
pushed over preimages of $C_2$ of greater height. But a depth $1$ preimage $P$ of
$C_1$ has height $1/3$, which is greater than the height of any preimage of
$C_2$, so $P$ is only pushed over $C_2$ itself. Since the angles of $C_2$ differ by
$1/3$, pushing $P$ over $C_2$ does not change its image $P^3$. So we can simply
add $P^3$ to $\Lambda_T(t)$.

Now imagine shrinking the height of $C_2$ to $1/3(1-\epsilon)$ and then varying its
angles again. The depth $1$ preimages of $C_1$ pinch the unit circle into smaller
circles, and $C_2$ is confined to a single component. Since $C_1$ now has
height $<1/3$, the depth $2$ preimages $Q$ of $C_1$ in this component have bigger height
than any preimage of $C_2$, so they stay fixed until they collide with $C_2$,
and we can simply add the $Q^3$ to $\Lambda_T(t)$. In other words: the depth $2$ leaves
of $\Lambda_T(t)$ are the cubes of the depth $2$ preimages of $C_1$ in the component
of $S^1$ pinched along the depth $1$ preimages of $C_1$ containing $C_2$. It follows
that these leaves are disjoint, and do not cross depth $1$ leaves.

Inductively, shrink the height of $C_2$ to $3^{-n}(1-\epsilon)$. It is confined to
a component of $S^1$ pinched along the depth $\le n$ preimages of $C_1$, and as it 
moves around this component, it collides with some depth $(n+1)$ preimages $R$ of
$C_1$ and we add $R^3$ to $\Lambda_T(t)$. It follows (as before) that these leaves
are disjoint and do not cross leaves of depth $\le n$. This proves that $\Lambda_T(t)$
is an elamination.

To see how $\Lambda_T(t)$ varies with $t$, shrink $C_2$ down to the height of a depth
$n$ preimage $P$ it has just collided with. Then rotate $C_1$ and simultaneously
rotate $C_2$ at speed $3^{-n}$ (modulo leaves of greater height)
so that it continues to collide with $P$.
\end{proof}

Figure~\ref{tautological_lamination} depicts subsets of the tautological elaminations up
to depth six associated to $\theta_1=1/12$ in units where the 
unit circle has length 1.

\begin{figure}[htpb]
\centering
\includegraphics[scale=0.23]{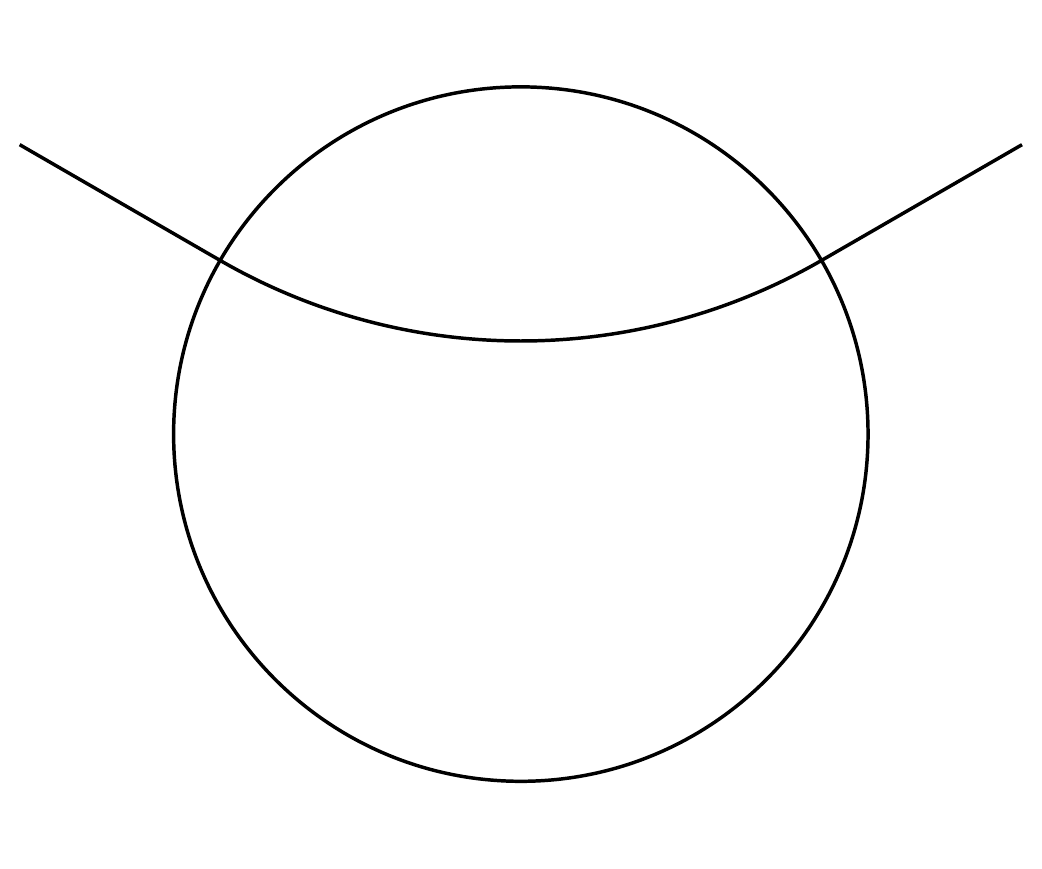} 
\includegraphics[scale=0.23]{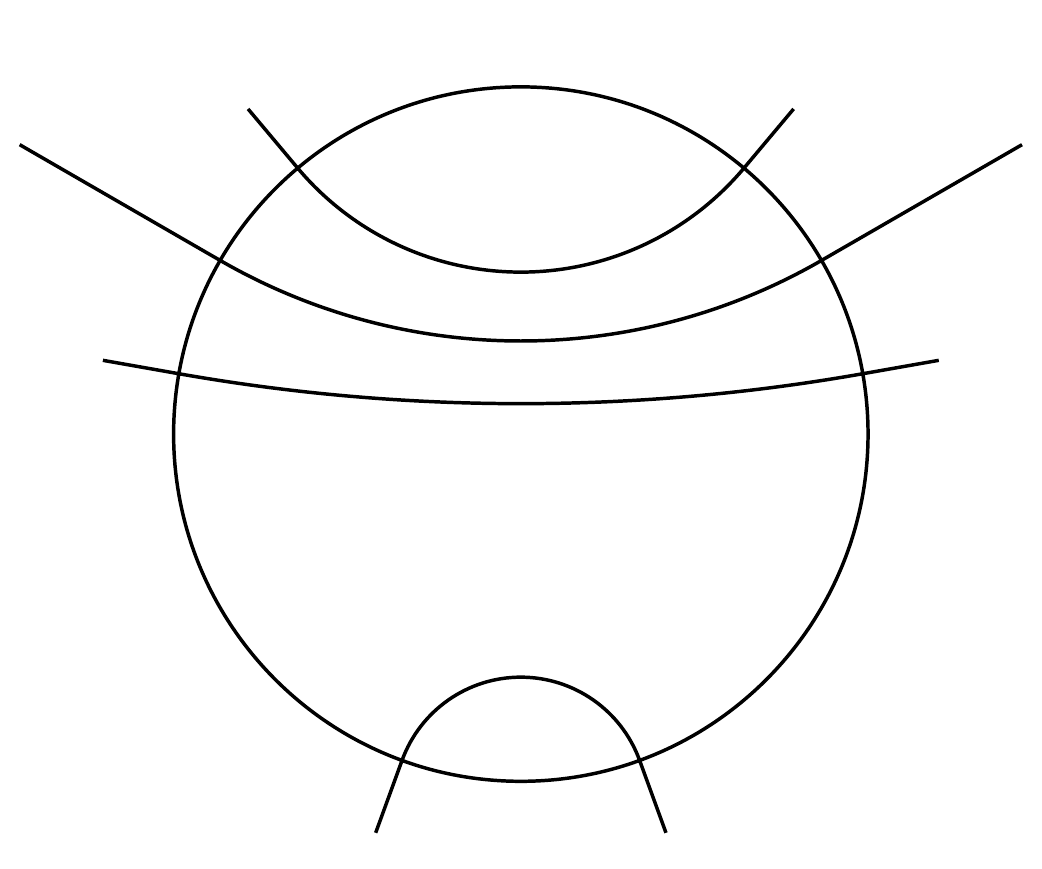} 
\includegraphics[scale=0.23]{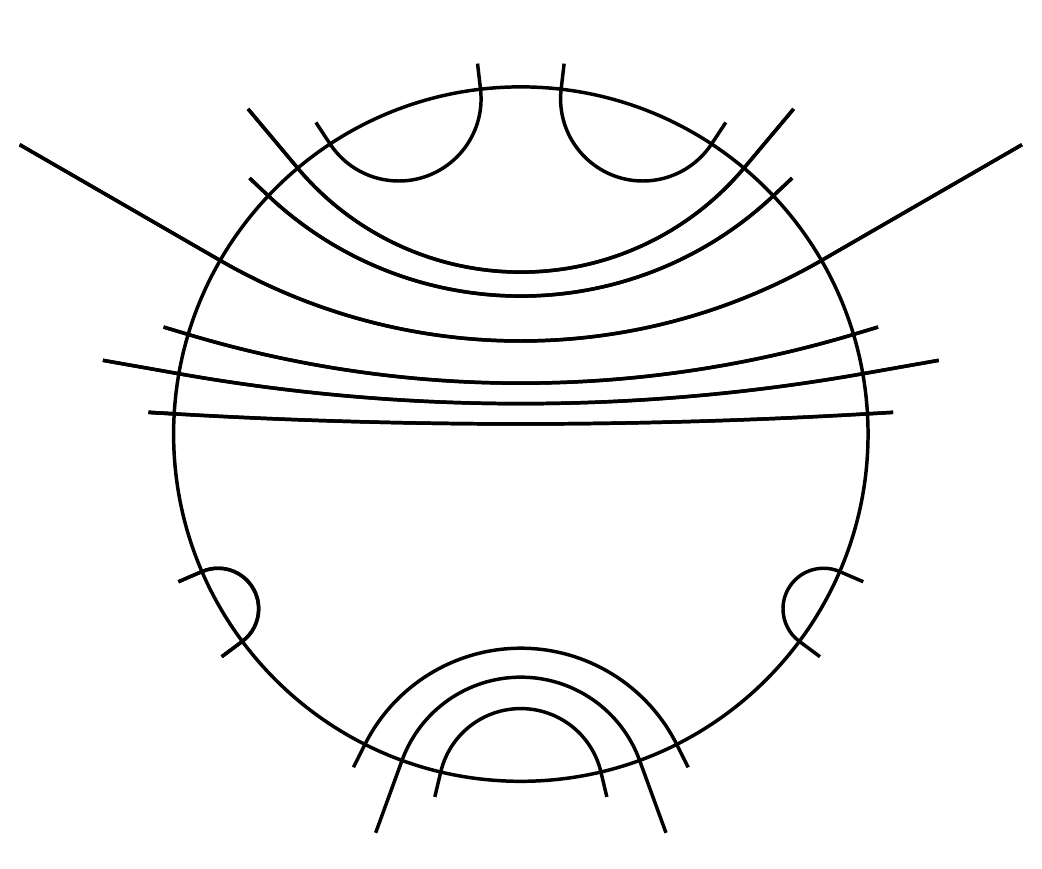} 
\includegraphics[scale=0.23]{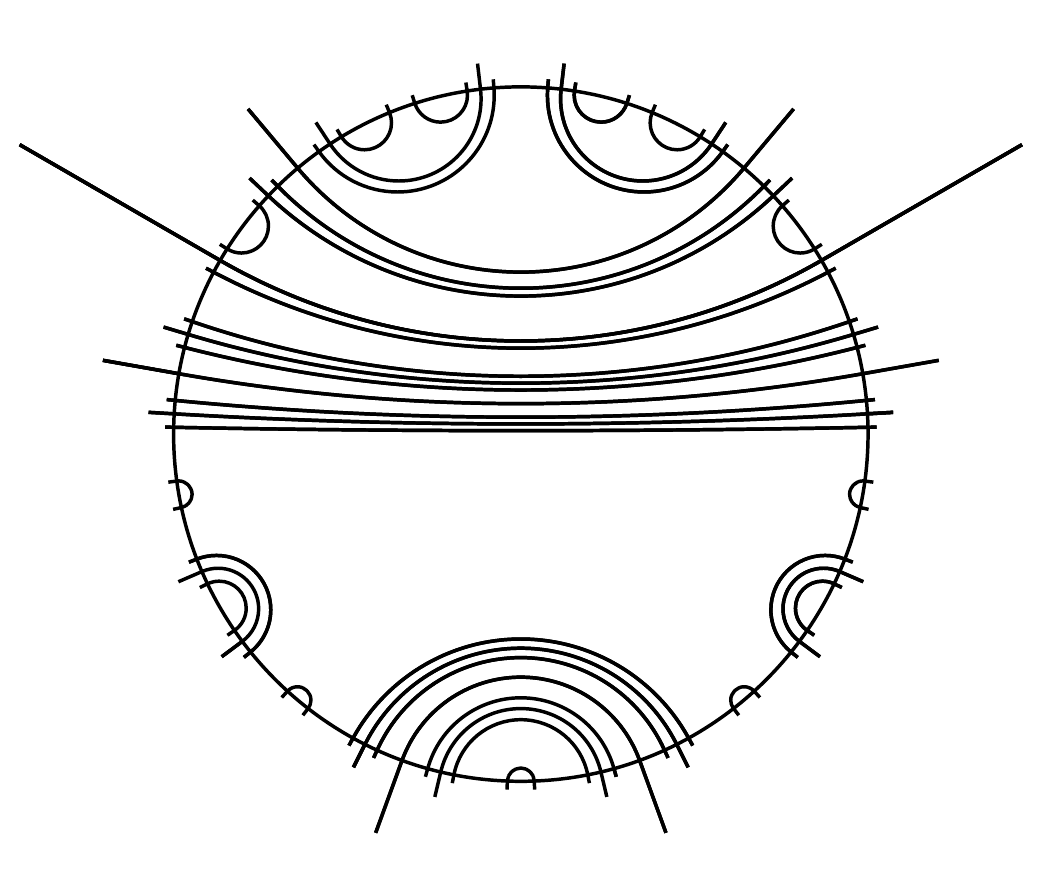} 
\includegraphics[scale=0.23]{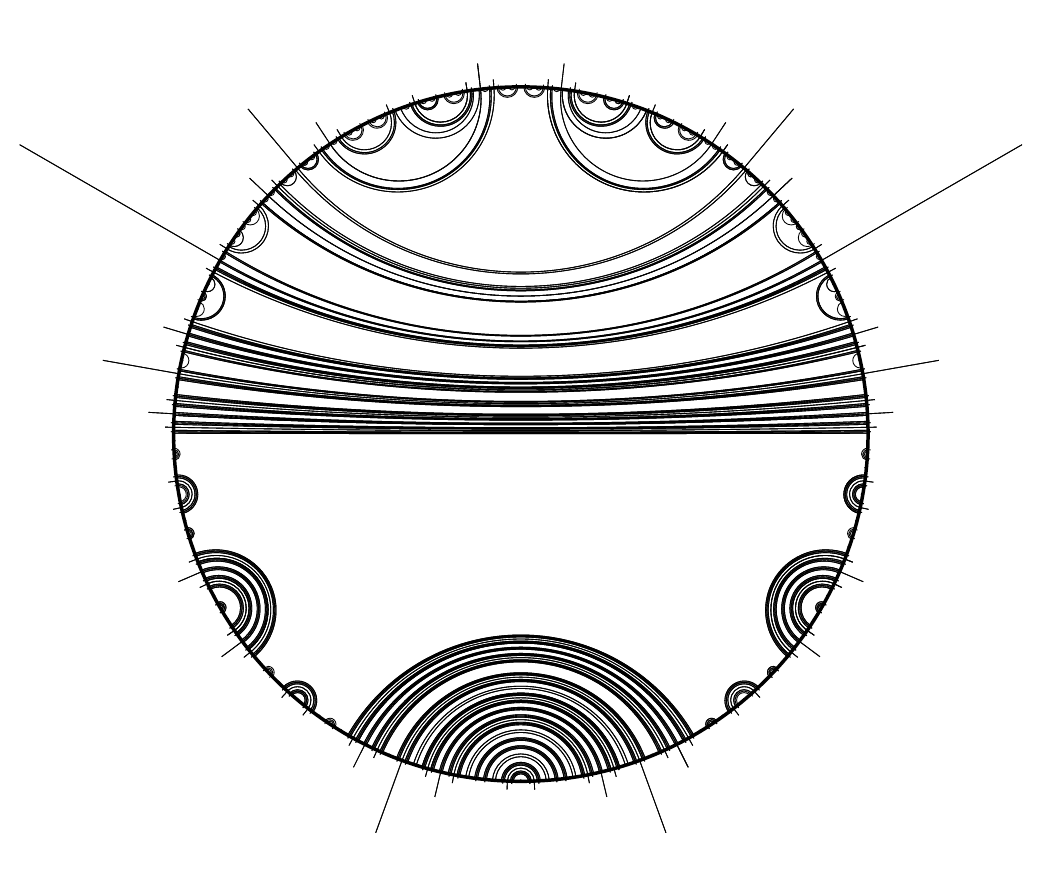} 
\caption{Tautological elaminations $\Lambda_T(1/12)$
to depths 1, 2, 3, 4 and 6}\label{tautological_lamination}
\end{figure}

\subsection{Topology of $\SS_3$}

Let $\Omega(t) = \E \mod \Lambda_T(t)$, and let $D_\infty(t)$ be the subsurface
of $\Omega(t)$ of height $\le 3(1-\epsilon)$. Then $D_\infty(t)$ is a disk minus a Cantor set, and as $t$ varies,
the $D_\infty(t)$ vary by `rotating' the level sets of height $h$ through angle $ht/3$.
By Proposition~\ref{proposition:tautological_lamination} this family of motions for
$t \in [0,1]$ induces a mapping class $\varphi$ of $D(0)$ to itself. The mapping
torus $N_\infty$ of $\varphi$ is the total space of a fiber bundle over $S^1$ whose fiber
over $t$ is $D_\infty(t)$.

Figure~\ref{tautological_pinch} shows a tautological elamination
$\Lambda_T(5/6)$ and the disk $D_\infty$ obtained by pinching it 
(to depth $7$). These pictures were generated by the program {\tt shifty}
\cite{Calegari_shifty} which pinches elaminations recursively one leaf at a time, 
instead of simultaneously pinching all leaves of fixed depth. Thus the picture
of $D_\infty$ is only a combinatorial approximation, and is not conformally accurate.

\begin{figure}[htpb]
\centering
\includegraphics[scale=0.4]{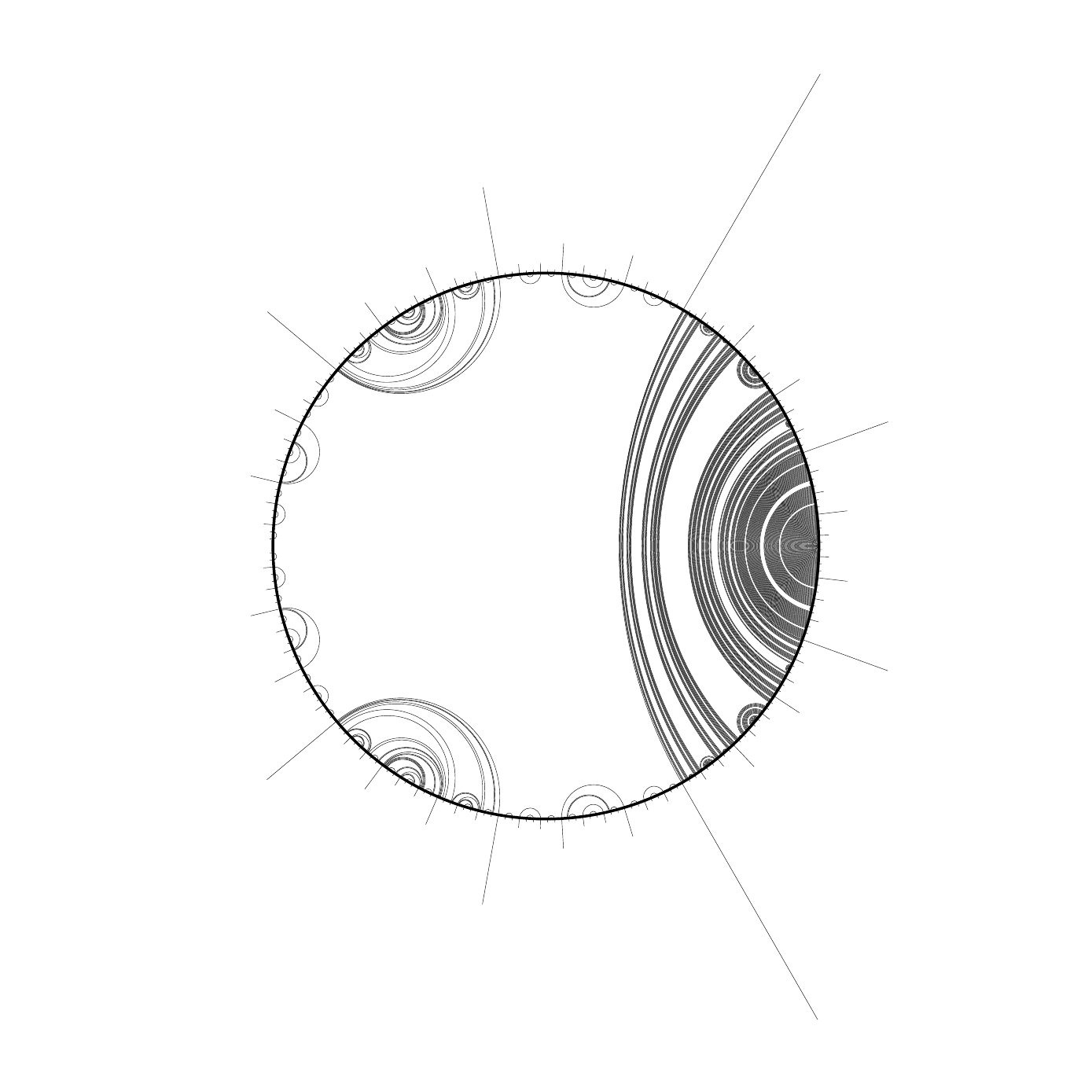} 
\includegraphics[scale=0.4]{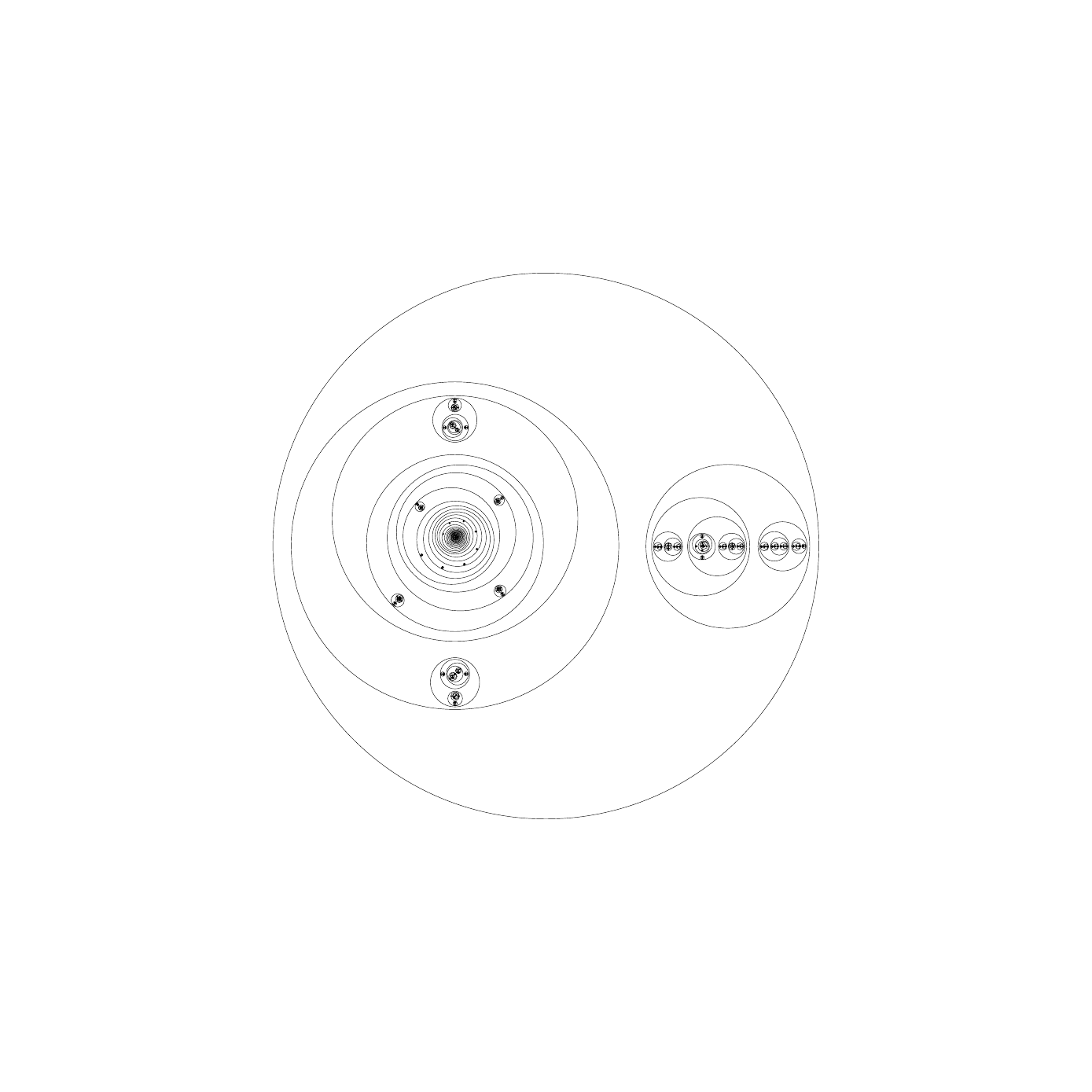}
\caption{Tautological elamination $\Lambda_T(5/6)$ 
and the disk obtained by pinching it}\label{tautological_pinch}
\end{figure}

\begin{theorem}[Topology of $\SS_3$]\label{theorem:S_3_topology}
The space $\SS_3$ is homeomorphic to a product $X_3 \times \R$ where $X_3$ is the
3-manifold obtained from the 3-sphere $S^3$ by drilling out a neighborhood of a 
right-handed trefoil and inserting the mapping torus $N_\infty$, so that the longitude 
intersects the circle $\partial D_\infty(t)$ at angle $t$.
\end{theorem}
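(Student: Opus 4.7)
The plan is to transfer the problem to $\DL_3$ via the realization map (Theorem~\ref{theorem:realization}), apply the squeezing corollary to split $\DL_3 \cong X_3 \times \R$ — where $X_3 \subset \DL_3$ is the real $3$-manifold on which the taller critical leaf $C_1$ has height $1$ — and then identify $X_3$ with the claimed surgery on $S^3$. A point of $X_3$ is encoded by a triple $(t,r,s)$ with $t \in S^1$ the angle of $C_1$ (tips at $t$ and $t+1/3$), $r \in (0,1]$ the height of the second critical leaf $C_2$, and $s$ an angle of $C_2$ (the other being $s+1/3$), constrained by $s \in (t+1/3,\,t+2/3)$ modulo push-overs of $C_2$ past deeper preimages of $C_1$. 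I would split $X_3 = X_3^{\mathrm{core}} \cup N$ along a hypersurface separating the collar $N = \{r \in [1-\epsilon,1]\}$ from its complement.

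First I would identify $N$ with the mapping torus $N_\infty$ fiber-by-fiber over $t \in S^1$. Fix $t$; as $r$ varies in $[1-\epsilon,1)$ and $s$ ranges over its allowed interval, $C_2$ is confined to the complement of the veins of preimages of $C_1$ of height greater than $r$, and at heights $r = 3^{-n}(1-\epsilon)$ it collides with depth-$n$ preimages and pushes over them. These are precisely the collisions assembled into $\Lambda_T(t)$ in Definition~\ref{definition:tautological}; consequently the $(r,s)$ fiber of $N$ over $t$ is canonically homeomorphic to $D_\infty(t)$, the pinched disk-minus-Cantor-set. As $t$ traverses $S^1$, Proposition~\ref{proposition:tautological_lamination} gives $\Lambda_T(t+s) = e^{i2\pi hs}\Lambda_T(t)$, identifying the total-space monodromy with the flow $\FF_t$ and thus $N$ with $N_\infty$.

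Next I would identify $X_3^{\mathrm{core}}$ with $S^3$ minus an open neighborhood of a right-handed trefoil. When $r \leq 1-\epsilon$ the two critical leaves have comfortably distinct heights, and the assignment taking each dynamical elamination to the critical-value pair $(f(c_1),f(c_2)) \in \C^2$ of the realized polynomial is a proper embedding. The Discriminant Complement example shows that (after squeezing normalization) $\SS_3$ agrees up to isotopy with $\C^2 - \Delta$ at large coefficient scale; intersecting $\Delta = \{-4p^3 - 27q^2 = 0\}$ with a unit $S^3 \subset \C^2$ produces a $(2,3)$-torus knot, i.e., the right-handed trefoil. After compactifying the $r \to 0$ end of $X_3^{\mathrm{core}}$ one obtains the claimed knot-complement identification.

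Finally, the gluing along the common torus $\partial N = \partial X_3^{\mathrm{core}}$ must be checked. The circle $\partial D_\infty(t) \subset N$ is a level set of the height function in the pinched disk, and Proposition~\ref{proposition:tautological_lamination} shows it winds once longitudinally as $t$ traverses $S^1$, matching the stated intersection condition. The chief obstacle is the identification in the previous paragraph: one must match the braid traced by the two critical leaves, as one loops around the collision locus, with the standard generator of the $3$-braid group, and verify that the closure is the right-handed rather than left-handed $(2,3)$-torus knot. This reduces to a careful comparison of orientations between the elamination side (set up via $\arg(z)$ on $\E$) and the polynomial side (set up via B\"ottcher coordinates).
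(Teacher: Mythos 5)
Your decomposition of $X_3$ is backwards. You set $N = \{r \in [1-\epsilon,1]\}$ (the collar where $C_1$ and $C_2$ have nearly equal heights) and claim this is the mapping torus $N_\infty$, while assigning the trefoil complement to $X_3^{\mathrm{core}} = \{r \le 1-\epsilon\}$. The opposite is true. When $r := h(C_2) \in [1-\epsilon,1]$ there are no precritical leaves of $C_1$ with height strictly between $r$ and $1$, so for fixed $t$ the $(r,s)$-slice of your $N$ is just an annulus, not a disk minus a Cantor set; indeed your own sentence ``at heights $r = 3^{-n}(1-\epsilon)$ it collides with depth-$n$ preimages'' already contradicts $r \in [1-\epsilon,1)$ for $n \ge 1$. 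The mapping torus is the region $\{h(C_2) \le 1-\epsilon\}$: one needs the full range $r \in (0,1-\epsilon]$ so that $C_2$ encounters arbitrarily deep preimages of $C_1$, which is precisely what builds the Cantor structure out of $\Lambda_T(t)$. Conversely, the trefoil complement is the collar near $r=1$: the slab $\{h(C_2) \in (1-\epsilon,1]\}$ is a torus times a half-open interval, and at $r = 1$ the labeling of $C_1$ versus $C_2$ becomes ambiguous, so the boundary torus is quotiented by the involution swapping $\theta_1,\theta_2$; the resulting circle bundle over a disk with two orbifold points of orders $2$ and $3$ is the standard Seifert fibration on the trefoil complement. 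Your discriminant heuristic is in fact correct but also belongs to that collar: large coefficient scale corresponds (after undoing the squeezing normalization) to both critical heights near $1$, not to $r$ small, so the ``compactification of the $r\to 0$ end'' you invoke is at the wrong end of $X_3^{\mathrm{core}}$.

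Once the two pieces are swapped back, your argument for the $N_\infty$ piece --- collisions of $C_2$ with precritical leaves of $C_1$, assembled into $\Lambda_T(t)$ with monodromy given by Proposition~\ref{proposition:tautological_lamination} --- is essentially the paper's argument. Your route to identifying the other piece as a trefoil complement, via the link of the discriminant variety at infinity, is a genuinely different (and classical) way to see the $(2,3)$-torus knot; it is reasonable, but unlike the paper's direct exhibition of the $\Z/2$-orbifold Seifert fibration it requires you to actually construct the isotopy between the critical-height collar and a sphere at infinity in $\C^2$ and to verify the chirality, points you correctly flag but do not settle.
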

\begin{proof}
This follows more or less directly from the definitions. Let's examine the subspace
$Y_3$ of $X_3$ for which $h(C_1)=1$ and $h(C_2)\le(1-\epsilon)$. If we fix $\theta_1$ and the
height $h:=h(C_2)$ then we obtain a (1-dimensional) subspace $\Gamma(\theta_1,h)$
of $Y_3$. Evidently $\Gamma(\theta_1,h)$ is obtained from the circle of
possible $\theta_2$ values $[\theta_1 + 1/3,\theta_1+2/3]/\text{endpoints}$ by
suitable cut and paste. By multiplying angles by $3$ we can identify this space of
$\theta_2$ values with the unit circle $S^1$; so $\Gamma(\theta_1,h)$ is obtained
from $S^1$ by cut and paste. We claim it is precisely equal to 
the result of cut and paste along the leaves of $\Lambda_T(t)$ of height $>h$

To see this, think about a component
$\gamma$ of $\Gamma(\theta_1,h)$; its preimage $\tilde{\gamma}$ in $S^1$ is a 
union of segments. The discontinuities of $\theta_2$ in $\Gamma(\theta_1,h)$ 
occur precisely when $C_2$ is pushed over a precritical leaf of $C_1$ of 
height $>h$; thus the boundary of each component of $S^1 - \tilde{\gamma}$ 
is a precritical leaf $P$ of $C_1$ so that $C_2$ collides with $P$ in 
some dynamical elamination $L(\theta_1,s)$. But then by definition $P^3$ 
is a leaf of $\Lambda_T(\theta_1)$, and all leaves of $\Lambda_T(\theta_1)$ 
arise this way. This proves the claim, and shows that $Y_3$ is homeomorphic 
to $N_\infty$.

It remains to show that $X_3-Y_3$ is homeomorphic to the complement of the right handed
trefoil. For each $h \in (1/3,1)$ the slice of $X_3$ for which $h(C_2)=h$ is just
a torus $T$, with coordinates $\theta_1 \in S^1$ and 
$\theta_2 \in [\theta_1 + 1/3,\theta_1+2/3]/\text{endpoints}$. When $h=1$ we can
no longer distinguish $C_1$ and $C_2$, so this torus is quotiented out by the
involution switching $\theta_1$ and $\theta_2$ coordinates; the
quotient is a circle bundle over an interval with orbifold endpoints of
orders $2$ and $3$ --- see Figure~\ref{quotient_torus}. 
Thus $X_3-Y_3$ is a circle bundle over a disk with two orbifold points, 
one of order $2$ and one of order $3$; this is the standard 
Seifert fibered structure on $S^3 - \text{trefoil}$.
\end{proof}

\begin{figure}[htpb]
\centering
\includegraphics[scale=1]{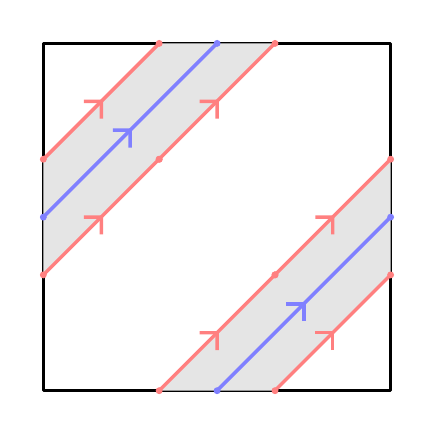} 
\caption{Quotient of the torus $T$ by the involution switching $\theta_1$ and $\theta_2$
is a circle bundle over an interval with orbifold endpoints of orders $2$ and $3$.}\label{quotient_torus}
\end{figure}

\subsection{Geometry and topology of $X_3$}

Let $\Lambda_T(\theta_1,n)$ denote the finite elamination consisting of the
leaves of $\Lambda_T(\theta_1)$ of depth $\le n$ (i.e.\/ they correspond in the
construction of the tautological elamination to depth $n$ preimages of $C_1$).

Let $\Omega_n(\theta_1)$ be the Riemann surface obtained by pinching
$\Lambda_T(\theta_1,n)$ and let $D_n(\theta_1)$ be the subsurface of height
$3(1-\epsilon)$. Then each $D_{n+1}(\theta_1)$ is obtained by pinching 
$D_n(\theta_1)$ along the depth $(n+1)$ leaves, and we can think of 
$D_\infty(\theta_1)$ as the limit. Likewise we can define mapping tori 
$N_n$ which are $D_n(\theta_1)$ bundles over the $\theta_1$ circle $S^1$.

Let $M_n$ denote the result of inserting $N_n$ into the right-handed trefoil
complement in $S^3$. Then $M_n$ is a link complement, $S^3 - K_n$ where $K_0$
is the trefoil itself and each $K_{n+1}$ is obtained from $K_n$ by (a rather
simple) satellite of its components. The limit $K_\infty = S^3 - X_3$ is a 
Cantor set bundle over $S^1$; one sometimes calls such objects {\em Solenoids}.

We now state and prove two theorems, which describe $\SS_3$ in geometric
resp. topological terms. The geometric statement is that $\SS_3$ is homotopic
to a {\em locally $\CAT(0)$ 2-complex}. This means a 2-dimensional
CW complex (in the usual sense) with a path metric of non-positive curvature; 
see e.g.\/ \cite{Bridson_Haefliger} for an introduction to the theory of $\CAT(0)$ spaces. 

The most important corollary of this structure for us is that a locally
$\CAT(0)$ complex is a $K(\pi,1)$; the proof is a generalization 
of the usual proof of the Cartan--Hadamard theorem for complete Riemannian 
manifolds of nonpositive curvature 
(which are themselves examples of locally $\CAT(0)$ spaces). Thus (for example)
$\pi_1(\SS_3)$ is torsion free, and has vanishing homology with any
coefficients in dimension greater than 2.

\begin{theorem}[$\CAT(0)$ 2-complex]\label{theorem:S_3_CAT0_2complex}
$\SS_3$ is a $K(\pi,1)$ with the homotopy type of a locally
$\CAT(0)$ 2-complex.
\end{theorem}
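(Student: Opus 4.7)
The plan is to reduce to $X_3$, decompose it along an essential torus, build locally $\CAT(0)$ 2-complex spines for each piece, and glue them metrically. Once such a 2-complex $K$ is constructed with $K\simeq X_3\simeq \SS_3$, the $K(\pi,1)$ conclusion is immediate: by the Cartan–Hadamard theorem for locally $\CAT(0)$ spaces, the universal cover $\tilde K$ is globally $\CAT(0)$ and hence contractible. The reduction to $X_3$ comes for free from the squeezing flow, since $\DL_3 \cong X_3 \times \R$ under its own homeomorphism with $\SS_3$.

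Next I would use the decomposition $X_3 = A \cup_T N_\infty$ of Theorem~\ref{theorem:S_3_topology}, where $A = S^3 - \text{nbhd(trefoil)}$ and the two pieces are glued along a torus $T$. For the trefoil complement, I would invoke its well-known structure as a Seifert fibered space over the Euclidean $(2,3)$-orbifold disk: its universal cover is $\mathbb{E}^2 \times \R$, and it collapses to a piecewise Euclidean 2-complex spine $K_A$ whose vertex links satisfy the standard $\CAT(0)$ link condition (e.g.\ the spine built from two Euclidean triangles meeting the vertices at angles $\pi/2$ and $2\pi/3$ around the two exceptional orbifold points). The boundary of $K_A$ inherits a flat torus structure on $T$, and I would choose the spine so that the induced flat structure on $T$ has the horizontal direction equal to the circle $\partial D_\infty(t)$ and the vertical direction equal to the longitude, rotating by angle $t$ as in the gluing prescription.

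For $N_\infty$ I would build a spine directly from the tautological elamination. For each $t$, the pinched surface $D_\infty(t) = \E\!\!\mod\Lambda_T(t)$ deformation retracts onto a planar graph $G(t)$ dual to the monkey-Morse decomposition by height strips: one vertex per (monkey) saddle of $h$, together with edges along the critical level graphs. Because the monodromy coincides with the flow $\FF_t$ (rotation at height $h$ by $h t/3$), it preserves the height function and sends $G(0)$ to $G(1)$ via an isomorphism; the mapping torus $K_B$ of $G(0)$ under this isomorphism is a 2-complex spine of $N_\infty$ naturally assembled out of Euclidean rectangles coming from the flat 1-form $dh+id\theta$. I would arrange that the metric on $K_B \cap T$ matches the flat torus structure induced from $K_A$, so that the glued complex $K := K_A \cup_T K_B$ is a piecewise Euclidean 2-complex homotopy equivalent to $X_3$.

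The main obstacle is verifying the link condition at vertices of $K$: every embedded cycle in the link of a vertex must have length $\ge 2\pi$. Inside $K_B$, each vertex comes from a (possibly monkey) saddle of $h$ on the pinched surface, and its link is a graph determined by the local branching of $\Lambda_T$; the link condition there reduces to the observation that each component of a level set of height just above/below a leaf is at least a $\text{monkey pants}$, which forces the vertex link to have girth $\ge 4$ in the square structure (hence length $\ge 2\pi$). Inside $K_A$ the condition holds by the classical Euclidean orbifold model. The serious case is at vertices lying on $T$: here one must match the $2\pi/3$ and $\pi/2$ angles coming from the Seifert piece with the angles coming from the flat cylinders of $K_B$, using the precise slope of the longitude against $\partial D_\infty(t)$ from Theorem~\ref{theorem:S_3_topology} to verify that no short loops appear across the seam. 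Once this local angle check is carried out, the resulting $K$ is a locally $\CAT(0)$ 2-complex homotopy equivalent to $\SS_3$, completing the proof.
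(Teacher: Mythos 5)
Your high-level strategy (reduce to $X_3$ via the squeezing flow, build a piecewise Euclidean $2$-complex spine, verify the link condition, and invoke Cartan--Hadamard) is the right spirit, but your construction is genuinely different from the paper's, and the difference is where a gap opens up. The paper does not decompose $X_3$ along the single torus $T$ and build a one-shot spine $K_B$ for $N_\infty$; instead it uses the nested exhaustion $M_0\subset M_1\subset\cdots$ by link complements from Theorem~\ref{theorem:S_3_link_complement}. It starts with the theta-graph mapping torus as the spine $S_0$ of the trefoil complement, and at each stage passes from the spine of $M_n$ to that of $M_{n+1}$ by gluing a flat annulus to a flat boundary torus $\partial M_n$ along a pair of parallel closed geodesics (determined by a depth-$(n+1)$ leaf of $\Lambda_T$) and re-thickening. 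The payoff of the inductive route is that the $\CAT(0)$ property is essentially automatic at each step: each gluing attaches a flat piece along a complete geodesic circle sitting inside a flat torus $2$-cell, so no new vertex-link computation is ever required --- all the local non-positive curvature is inherited.

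Your proposal instead requires a genuine pointwise link-condition verification, and two of these checks are missing in an essential way. First, inside $K_B$ you define the graph $G(0)$ only loosely (``one vertex per saddle, edges along critical level graphs'') without pinning down the piecewise Euclidean metric on the cells of the mapping torus; the remark that ``each component of a level set... is at least a monkey pants'' does not, as stated, translate into a bound ``every loop in the vertex link has length $\geq 2\pi$,'' because that bound depends on the actual dihedral angles of your rectangles at each saddle, which you have not specified. Second, and more seriously, you explicitly defer the angle-matching check at the seam $T$ (``Once this local angle check is carried out...''). This is not a small detail: it is the place where the Seifert-geometry spine of the trefoil piece must fit the $dh+id\theta$-metric on $K_B$, with the slope prescribed by the gluing in Theorem~\ref{theorem:S_3_topology}, and there is no a priori reason the angles close up to $\geq 2\pi$ without a computation. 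There is also a notational slip worth fixing: spines of $A$ and of $N_\infty$ are interior to those pieces, so you cannot literally form $K_A\cup_T K_B$; the homotopy-correct object is $K_A\cup T\cup K_B$ with $T$ retained as a flat $2$-cell, and once you make that explicit the link computation at $T$ becomes unavoidable. The paper's inductive construction sidesteps all of this by never isolating a single seam and never needing to verify a link condition by hand.
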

\begin{proof}
Up to homotopy, we can take $M_0$ to be the {\em spine} of the trefoil complement;
this is the mapping torus of a theta graph by an order three isometry that
permutes the edges by a cyclic symmetry. It can be thickened slightly to $M_0$ by
gluing on a metric product (flat) torus times interval. Each $M_n$ has boundary a
union of totally geodesic flat tori, and each $M_{n+1}$ is obtained by gluing
a flat annulus whose boundary components are parallel geodesics in $\partial M_n$
(circlewise, the endpoints of a leaf of the tautological elamination of depth
$(n+1)$) and then gluing a flat torus times interval on each resulting
boundary component to thicken. The union is homeomorphic to $X_3$. 

Simply gluing the spines at each stage without thickening gives a homotopic complex
which is evidently $\CAT(0)$.
\end{proof}

\begin{corollary}
$\pi_1(\SS_3)$ is torsion-free, and homology with any coefficients vanishes in
dimension greater than 2.
\end{corollary}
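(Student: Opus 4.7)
The plan is to deduce both statements directly from Theorem~\ref{theorem:S_3_CAT0_2complex}, which realizes $\SS_3$ up to homotopy equivalence as a locally $\CAT(0)$ $2$-complex $K$. Since locally $\CAT(0)$ spaces are $K(\pi,1)$s, the group homology of $\pi_1(\SS_3)$ agrees with the ordinary (co)homology of $K$ in every coefficient system.

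For the homology vanishing, I would observe that $K$ is $2$-dimensional as a CW complex, so its cellular chain complex is concentrated in degrees $0,1,2$ regardless of the coefficient system. Hence $H_n(\SS_3;A) = H_n(K;A) = 0$ for all $n > 2$ and every abelian group $A$; the same applies to cohomology, and (via the $K(\pi,1)$ property) to the group (co)homology of $\pi_1(\SS_3)$.

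For torsion-freeness, my plan is the standard Cartan--Hadamard-type argument for $\CAT(0)$ complexes. The universal cover $\widetilde{K}$ is a complete $\CAT(0)$ space on which $\pi_1(\SS_3)$ acts freely and properly by isometries (freely because $K$ is aspherical and its universal cover is contractible, hence has no nontrivial finite-order deck transformations provided the action is free to begin with). So suppose $g \in \pi_1(\SS_3)$ has finite order $n>1$; I would apply the Bruhat--Tits fixed point theorem (see e.g.\ \cite{Bridson_Haefliger}): any finite group of isometries of a complete $\CAT(0)$ space with a bounded orbit has a fixed point, because the circumcenter of an orbit is uniquely defined. Taking any $x \in \widetilde{K}$, the orbit $\{x, gx, \dots, g^{n-1}x\}$ is finite, hence bounded, so $g$ fixes its circumcenter. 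This contradicts freeness of the deck action, so no such $g$ exists.

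The only delicate point is verifying that the deck action of $\pi_1(\SS_3)$ on the universal cover of the locally $\CAT(0)$ model is genuinely free, not merely properly discontinuous; but this is automatic for the universal cover of any connected, locally nicely behaved aspherical space, since covering transformations act freely by definition. With that established, both conclusions follow without further work.
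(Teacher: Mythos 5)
Your argument is correct in outline and matches the paper's intended reasoning: the corollary has no separate proof in the text because it is meant to follow directly from Theorem~\ref{theorem:S_3_CAT0_2complex}, exactly via the two facts you use (a locally $\CAT(0)$ complex is aspherical, and a $2$-complex has trivial homology above dimension $2$); the author records both in the paragraph immediately preceding that theorem.

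Two comments on the torsion-freeness step. First, the point you flag as delicate --- freeness of the deck action --- is not actually subtle: deck transformations of a universal cover act freely by definition, independent of any asphericity or curvature hypotheses, so there is nothing to verify there. Second, the point you do \emph{not} flag is the one that actually needs a word: Bruhat--Tits requires the universal cover to be a \emph{complete} $\CAT(0)$ space, and the metric Cartan--Hadamard theorem supplies this only when the base complex is complete. The $2$-complex built in the proof of Theorem~\ref{theorem:S_3_CAT0_2complex} is an infinite piecewise Euclidean complex whose cells shrink in size as one descends the tree (the annuli glued at stage $n$ have circumferences scaling like $\ell/3^n$), so the standard ``finitely many isometry types of shapes'' criterion for completeness does not apply verbatim and the claim deserves justification. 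A clean way to sidestep this entirely --- and the one more in keeping with the paper's emphasis on the $K(\pi,1)$ property as the main payoff --- is purely homological: $\pi_1(\SS_3)$ has a finite-dimensional $K(\pi,1)$ and hence finite cohomological dimension, whereas any group with a nontrivial finite-order element contains some $\Z/n\Z$, which has nonzero cohomology in arbitrarily high degrees and therefore infinite cohomological dimension. This derives torsion-freeness from asphericity and $2$-dimensionality alone, with no appeal to the $\CAT(0)$ metric or its completeness.
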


The topological statement is that $X_3$ is homeomorphic to a Solenoid complement
of a particularly simple kind: one obtained as an infinite increasing union of 
iterated cables.

\begin{theorem}[Link complement]\label{theorem:S_3_link_complement}
The degree $3$ shift locus $\SS_3$ is homeomorphic to $X_3 \times \R$
where $X_3$ is $S^3$ minus a Solenoid $K_\infty$ obtained as a limit of a sequence
of links $K_n$ where
\begin{enumerate}
\item{$K_0$ is the right-handed trefoil; and}
\item{Each component $\alpha$ of $K_n$ gives rise to new components
$\alpha_0 \cup \alpha_c$ of $K_{n+1}$, where $\alpha_0$ is the core of a neighborhood
of $\alpha$ (i.e.\/ we can think of it just as $\alpha$ itself) and 
$\alpha_c$ is a finite collection of $(p_\alpha,q_\alpha)$ cables of $\alpha_0$, 
for suitable $p_\alpha,q_\alpha$.}
\end{enumerate}
\end{theorem}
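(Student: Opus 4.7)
The plan is to piggyback on Theorem~\ref{theorem:S_3_topology}, which already produces the product decomposition $\SS_3 \cong X_3 \times \R$ and describes $X_3$ as the result of drilling a neighborhood of the right-handed trefoil from $S^3$ and inserting $N_\infty$. I would define the finite approximations $M_n$ of $X_3$ by exactly the same construction, using $N_n$ (the $D_n(\theta_1)$-bundle over $S^1$) in place of $N_\infty$. Since each $D_n$ is a closed disk with finitely many open subdisks removed, each $N_n$ is a compact $3$-manifold with torus boundary components, and the corresponding $M_n$ is the complement in $S^3$ of a link $K_n$. The theorem will follow if I can verify two things by induction on $n$: first, that $M_0$ is (up to isotopy) the trefoil complement, so $K_0$ is the right-handed trefoil; and second, that the passage from $M_n$ to $M_{n+1}$ drills, from a tubular neighborhood of each component $\alpha$ of $K_n$, a finite collection of $(p_\alpha,q_\alpha)$-cables of $\alpha$ --- with the original $\alpha$ remaining as $\alpha_0$ because the "old" boundary tori of $N_n$ persist inside $N_{n+1}$.

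For the inductive step, the key observation is that $D_{n+1}(\theta_1)$ is obtained from $D_n(\theta_1)$ by pinching along the depth $(n+1)$ leaves of $\Lambda_T(\theta_1)$. By the Planar lemma, each such simple leaf adds exactly one new boundary circle to the disk, and this new circle lies in the subdisk of $D_n$ bounded by one of its existing boundary circles --- that is, inside the hole corresponding to a unique component $\alpha$ of $K_n$. Taking the mapping torus, the collection of new boundary circles in the various fibers, as $\theta_1$ sweeps $S^1$, assembles into new boundary tori of $N_{n+1}$; these tori sit inside the (neighborhood of the) boundary torus of $\alpha$, so the corresponding new components of $K_{n+1}$ are knots embedded in that neighborhood, i.e.\/ cables of $\alpha$. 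The number of cables on $\alpha$ equals the number of orbits of the monodromy on depth $(n+1)$ leaves lying in that hole, and each individual cable is the orbit of a single leaf under $\FF_t$.

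To read off $(p_\alpha,q_\alpha)$, I would use Proposition~\ref{proposition:tautological_lamination}, which expresses the monodromy on $\Lambda_T$ as the flow $\FF_t$ whose angular speed on a leaf of height $h$ is proportional to $h$. A depth $(n+1)$ leaf has height $3^{-n}$ (up to the shrinking of $C_2$), and by Theorem~\ref{theorem:powers_of_2} the orbit closing up requires $t$ to traverse $S^1$ a power-of-two number of times; together this determines the slope on the boundary torus of $\alpha$, once the meridian (angular direction in the hole of $D_n$) and longitude (the $S^1$ base direction) are fixed. The limit $K_\infty = \bigcap_n (\text{neighborhood of } K_n)$ is then the Cantor-set bundle over $S^1$ described in Theorem~\ref{theorem:S_3_topology}, completing the identification.

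The main obstacle is the framing calculation: the boundary tori of $K_n$ are embedded in $S^3$ in a highly knotted way (already $K_0$ is a trefoil), so correctly identifying the preferred longitude on each $\partial\nu(\alpha)$ and carefully tracking how the cable's slope transforms under the inductive nesting will require some bookkeeping --- in particular, one must check that the slope determined intrinsically by the flow $\FF_t$ on the fiber is the same $(p_\alpha,q_\alpha)$ one obtains from the Seifert framing of $\alpha$ in $S^3$, at each stage of the induction.
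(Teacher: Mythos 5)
Your overall strategy matches the paper's: $K_0$ follows from Theorem~\ref{theorem:S_3_topology}, and the inductive step analyzes how pinching $D_n$ along the depth $(n+1)$ leaves of $\Lambda_T$ produces $D_{n+1}$ and hence $K_{n+1}$ from $K_n$. But there is a genuine gap at the heart of the inductive step.

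You assert that the new components of $K_{n+1}$ sitting inside a tubular neighborhood of $\alpha$ are cables, and you justify the existence of the core $\alpha_0$ by saying ``the old boundary tori of $N_n$ persist inside $N_{n+1}$.'' This is false: the boundary circle $\gamma$ of $D_n$ corresponding to $\alpha$ does \emph{not} survive as a boundary circle of $D_{n+1}$ --- it gets pinched away and replaced by the collection $\lbrace \gamma_j \rbrace$ of complementary components. More seriously, ``new circles in a neighborhood of $\gamma$'' only gives satellites, not cables; and nothing in your argument forces all the new components to be cables of one another with the \emph{same} slope $(p_\alpha,q_\alpha)$.

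The missing ingredient is the observation that, if $m$ is the least period of $\gamma$ under the monodromy $\varphi$, then $\varphi^m$ acts on $\gamma$ (equivalently on the disk it bounds) as a finite-order rotation, through angle $2\pi p_\alpha/q_\alpha$. A finite-order rotation of the disk has a unique fixed point --- the center --- which lies in exactly one complementary component of the depth-$(n+1)$ lamination on $\gamma$. That distinguished component $\gamma_0$ is the unique one invariant under $\varphi^m$; its mapping torus is the core $\alpha_0$. All other $\gamma_j$ are freely permuted by the rotation in orbits of length exactly $q_\alpha$, and under the mapping torus construction each such orbit is precisely a $(p_\alpha,q_\alpha)$ cable of $\alpha_0$. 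This single observation simultaneously produces the core, shows that the remaining components are cables rather than arbitrary satellites, and explains why they all share the same slope. Your appeal to Theorem~\ref{theorem:powers_of_2} is both unnecessary (the statement to be proved does not constrain $q_\alpha$) and inappropriate here, since that theorem is derived later, in part as a refinement of the present result; invoking it risks circularity. The framing concern you raise at the end is reasonable, but secondary to supplying the rotation argument.
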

\begin{proof}
The only thing to prove is the second bullet point. Let $\alpha$ be a component
of $K_n$. The boundary of a tubular neighborhood of $\alpha$ is the mapping
torus of a finite collection of boundary circles of $D_n(0)$ which are 
permuted by the monodromy $\varphi$. Let $m$ be the least power of $\varphi$ that
takes one such boundary component $\gamma \subset \partial^- D_n(0)$ to itself.
Then $\varphi^m$ acts on $\gamma$ by rotation through $2\pi p_\alpha/q_\alpha$.

The depth $(n+1)$ leaves of $\Lambda_T$ on the component $\gamma$ form a 
finite elamination permuted by $\varphi^m$. Think of this as determinining 
a finite geodesic lamination of $\D$. The complementary components are in bijection
with the components $\gamma_j$ of $\partial^- D_{n+1}(0)$ obtained by pinching 
$\gamma$, and we must understand how $\varphi^m$ acts on
them. A finite order rotation of $\D$ has a unique fixed point --- the center. So
there is a unique component $\gamma_0$ invariant under $\varphi^m$, and all the
other components are freely permuted with period $q_\alpha$. Evidently under
taking mapping tori $\gamma_0$ is associated to the core $\alpha_0$ 
and the other $\gamma_j$ are associated to components $\alpha_c$ which are all
$(p_\alpha,q_\alpha)$ cables of $\alpha_0$.
\end{proof}

\begin{corollary}[Homology of $\SS_3$]
$H_1$ and $H_2$ of $\SS_3$ (and of $\pi_1(\SS_3)$) is free abelian on 
countably infinitely many generators. $H_0=\Z$ and $H_n=0$ for all $n>2$.
\end{corollary}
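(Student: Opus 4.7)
The first three assertions follow immediately from Theorem~\ref{theorem:S_3_CAT0_2complex}: being a $K(\pi,1)$, the homology of $\SS_3$ coincides with the group homology of $\pi_1(\SS_3)$; having the homotopy type of a $2$-complex forces $H_n(\SS_3)=0$ for $n>2$; and $H_0(\SS_3)=\Z$ since $\SS_3$ is connected.

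For $H_1$ and $H_2$, I would apply Alexander duality to the solenoid complement description $X_3 = S^3 - K_\infty$ furnished by Theorem~\ref{theorem:S_3_link_complement}. Since $K_\infty$ is compact in $S^3$, this yields
\[
H_1(\SS_3) \cong \check{H}^1(K_\infty) \quad \text{and} \quad H_2(\SS_3) \cong \tilde{\check{H}}^0(K_\infty),
\]
reducing the problem to computing the \v{C}ech cohomology of $K_\infty$.

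By continuity of \v{C}ech cohomology under nested intersections of compacta, $\check{H}^*(K_\infty) = \varinjlim_n H^*(N(K_n))$, where each $N(K_n)$ is a disjoint union of $c_n$ solid tubular neighborhoods of the components of $K_n$. So $H^0(N(K_n)) \cong H^1(N(K_n)) \cong \Z^{c_n}$, and the satellite construction of Theorem~\ref{theorem:S_3_link_complement} forces $c_n \to \infty$. For each component $\alpha$ of $K_n$ refined into $\alpha_0 \cup \alpha_c$ inside $K_{n+1}$, I would show that the transition maps in the direct system take the form $\chi_\alpha \mapsto \chi_{\alpha_0} + \sum_{\beta \in \alpha_c} \chi_\beta$ on $H^0$, and $g_\alpha \mapsto g_{\alpha_0} + p_\alpha \sum_{\beta \in \alpha_c} g_\beta$ on $H^1$ (since each $(p_\alpha,q_\alpha)$-cable $\beta$ represents $p_\alpha\cdot[\alpha]$ in $H_1$ of the ambient solid torus). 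A triangular change of basis then exhibits both maps as split injections with cokernels free abelian on the new cable generators, so the colimits are free abelian of countably infinite rank.

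The main technical step is the explicit computation of the transition map on $H^1$, where the cable linking number $p_\alpha$ enters the coefficient; granting this, the triangular structure guarantees free cokernels at every stage, and hence free abelian colimits. Since passing from $\check{H}^0$ to $\tilde{\check{H}}^0$ only quotients by the augmentation summand $\Z$, the group $H_2(\SS_3)$ remains free abelian of countable rank, completing the computation.
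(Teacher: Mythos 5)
Your proof is correct, and the approach is the natural one given where the corollary sits in the paper (directly after Theorem~\ref{theorem:S_3_link_complement}). The route via Alexander duality and \v{C}ech cohomology of the solenoid $K_\infty$ is sound: $K_\infty$ is compact (a nested intersection of tubular neighborhoods), so duality applies, continuity of $\check{H}^*$ identifies it with $\varinjlim H^*(N(K_n))$, and the transition maps are triangular split injections with free cokernel, so the colimit is free abelian of countable rank. This is exactly dual to, and interchangeable with, the slightly more direct route of writing $X_3 = \bigcup_n M_n$ with $M_n = S^3 - N(K_n)$ and computing $H_*(X_3) = \varinjlim H_*(M_n)$: each $M_n$ is a compact $3$-manifold with $c_n$ torus boundary components, with $H_1(M_n) \cong \Z^{c_n}$ generated by meridians and $H_2(M_n) \cong \Z^{c_n - 1}$, and the inclusion $M_n \hookrightarrow M_{n+1}$ sends the meridian of $\alpha$ to the meridian of $\alpha_0$ plus the winding number times the sum of meridians of the new cables --- the transpose of your cohomology transition matrix. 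Both descriptions yield the same split-injective, free-cokernel structure and hence the same conclusion, so neither buys anything significant over the other; it is a matter of taste.

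One small index slip worth flagging: from the construction in Theorem~\ref{theorem:S_3_link_complement}, each component of $\alpha_c$ is the mapping torus of a $\varphi^m$-orbit of period $q_\alpha$, hence winds $q_\alpha$ (not $p_\alpha$) times around the core $\alpha_0$ in the tubular neighborhood of $\alpha$. The coefficient in your $H^1$ transition map should therefore be $q_\alpha$. This has no effect on the argument, since any nonzero coefficient gives the same triangular structure and free cokernel, but it is good to keep the conventions straight.
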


In fact, it is possible to get more precise information about the denominators
$q_\alpha$, and in fact we are able to show:

\begin{theorem}[Powers of 2]\label{theorem:powers_of_2}
The orbit lengths under $\varphi$ of the cuffs of $D_n$ 
(and hence all denominators $q_\alpha$ in Theorem~\ref{theorem:S_3_link_complement}) 
are powers of 2.
\end{theorem}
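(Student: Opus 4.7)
The plan is Galois-theoretic, along the lines suggested in the introduction. We will interpret the cuffs of $D_n$ as roots of a polynomial $P_n$ over $K_0 = \Q(e^{2\pi i \theta_1})$, and show that $P_n$ splits in a tower of quadratic extensions of $K_0$. Since $\varphi$ is induced by the loop $\theta_1 \mapsto \theta_1 + 1$, its action on the root set is realized by an element of the Galois group of the splitting field, which is then a $2$-group; orbit lengths are divisors of a $2$-power, hence themselves powers of $2$.

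The argument proceeds by induction on $n$, with the hypothesis strengthened to include the rotation-angle statement from the proof of Theorem~\ref{theorem:S_3_link_complement}: for each cuff $\alpha$ of $D_n$ with $\varphi$-orbit length $m$, the rotation $\varphi^m|_\alpha$ has angle $2\pi p_\alpha/q_\alpha$ with both $m$ and $q_\alpha$ powers of $2$. The satellite-cabling description of $D_{n+1} \supset D_n$ then immediately propagates the orbit-length claim: a non-core new cuff in the orbit of $\alpha$ has $\varphi$-orbit length $m q_\alpha$, which is a power of $2$ precisely because $m$ and $q_\alpha$ both are. Thus what must be verified at each new level is the corresponding power-of-$2$ claim for the new denominators $q_{\alpha'}$.

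To do this, the plan is to build an explicit tower
\[
 K_0 \subset K_1 \subset K_2 \subset \cdots, \qquad [K_{j} : K_{j-1}] = 2^{e_j},
\]
such that all angles of the tips of depth-$\le n$ leaves of $\Lambda_T(\theta_1)$ lie in $K_n$. The construction in Proposition~\ref{proposition:tautological_lamination} is algorithmic: at step $j$, the leaf $C_2$ is shrunk to height $3^{-j}(1-\epsilon)$ and, as its angles vary, it collides with the depth-$(j+1)$ preimages of $C_1$ confined to a single component of the circle pinched along depth-$\le j$ leaves. Although inverting $z \mapsto z^3$ suggests cubic extensions, the non-crossing matching between the two tips of the simple critical leaf $C_2$ and the preimages of $C_1$ should constrain the admissible tips to pairs describable by iterated quadratic relations over the field already generated. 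The action of $\varphi$, being continuous in $\theta_1$, automatically factors through the Galois group of the resulting $2$-tower.

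The main obstacle is making the quadratic-extension claim precise. One must produce, for each new depth-$(j+1)$ leaf to be added to $\Lambda_T(\theta_1)$, an explicit degree-$2$ polynomial over $K_j$ whose roots are the angles of the new tip; the subtlety is that the natural equation arising from $z \mapsto z^3$ is a cubic, and one has to use the non-crossing selection — together with the fact that the leaf $C_2$ carries its two tips rigidly linked at separation $1/3$ — to factor out the unwanted root in a Galois-equivariant way. I expect this to be the technical heart of the argument; once it is settled, the Galois-theoretic packaging delivers the conclusion that the orbit length of every cuff, and every denominator $q_\alpha$, is a power of $2$.
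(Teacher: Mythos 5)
Your proposal takes a genuinely different route from the paper, and it has a gap exactly where the paper itself warns that a gap exists.

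The paper's proof does not work with number fields or Galois groups of splitting fields over $\Q(e^{2\pi i\theta_1})$ at all, and it does not try to extract quadratic relations directly from the combinatorics of the tautological elamination. Instead it passes through the sausage map (Theorem~\ref{theorem:monkey_moduli}) to identify the relevant piece of $X_3$ with a moduli space of degree $3$ sausage shifts, and then invokes Example~\ref{example:degree_3_sausages}: the fiber of that moduli space is $\C - Z_t$, where the finite set $Z_t$ is built from $\lbrace c, -2c\rbrace$ by repeatedly pulling back under explicit degree-$2$ branched covers $z \mapsto z^2 + c_j$. The monodromy of the bundle acts on $Z_t$ compatibly with this tower of double covers, so it factors through an iterated extension of $\Z/2$'s, a $2$-group, whence all orbits have length a power of $2$. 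The phrase ``iterated quadratic extensions'' in the introduction refers to these degree-$2$ polynomial maps, i.e.\ to covering-space monodromy, not to field extensions and a literal Galois group. Your translation of this slogan into number-theoretic Galois theory introduces a real obstacle: the Galois group of the splitting field of a polynomial over $\Q(e^{2\pi i\theta_1})$ is not the same object as the monodromy of a family over the circle $\theta_1 \in S^1$, and for generic (irrational, hence transcendental) $\theta_1$ the base field is not even well-behaved for this purpose.

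More importantly, your ``main obstacle'' --- producing for each new leaf an explicit degree-$2$ polynomial over $K_j$, using the non-crossing constraint to discard the unwanted root of the cubic $z\mapsto z^3$ in a Galois-equivariant way --- is not a technical detail to be filled in later; it \emph{is} the theorem. The paper states explicitly just after Theorem~\ref{theorem:powers_of_2} that no direct combinatorial proof in terms of the tautological elamination is known, and that finding one would be worthwhile. Your plan is precisely an attempt at such a direct combinatorial proof, and you have not supplied the key step. Absent that step, the inductive framing and the satellite-cabling propagation are fine as far as they go, but the base case and inductive step both rest on the unproven quadratic-selection claim, so the argument does not close. To repair it along the paper's lines, you would need to replace the number-field tower with the moduli-space tower of branched double covers: pass through the sausage map, identify cuffs of $D_n$ with points of $Z_t$, and observe that the monodromy respects the iterated degree-$2$ covering structure.
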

In fact, the proof of Theorem~\ref{theorem:powers_of_2} 
goes via arithmetic, and will be given in \S~\ref{section:sausages};
technically, the proof is a consequence of Theorem~\ref{theorem:monkey_moduli} and
Example~\ref{example:degree_3_sausages}. We do not actually know
a direct combinatorial proof of this theorem in terms of the combinatorics of
the tautological elamination, and believe it would be worthwhile to try to find one.
We explore the combinatorics of the tautological elamination further in
\S~\ref{subsection:sausage_tautological}.

The tautological elamination has exactly $3^{n-1}$ leaves of depth $n$ and therefore
$(3^n-1)/2$ leaves of depth $\le n$. It follows that $D_n$ is a disk with
$(3^n+1)/2$ holes. However, the monodromy $\varphi$ permutes these nontrivially,
and $K_n$ has one component for each orbit.

The links $K_n$ have $1,2,5,11$ components for $n=0,1,2,3$, though the degrees with
which these components wrap around the cores of their parents are quite complicated. 
Thickened neighborhoods of $K_n$ for $n=0,1,2$ are depicted in Figure~\ref{trefoil_cables}.

\begin{figure}[htpb]
\centering
\includegraphics[scale=0.1]{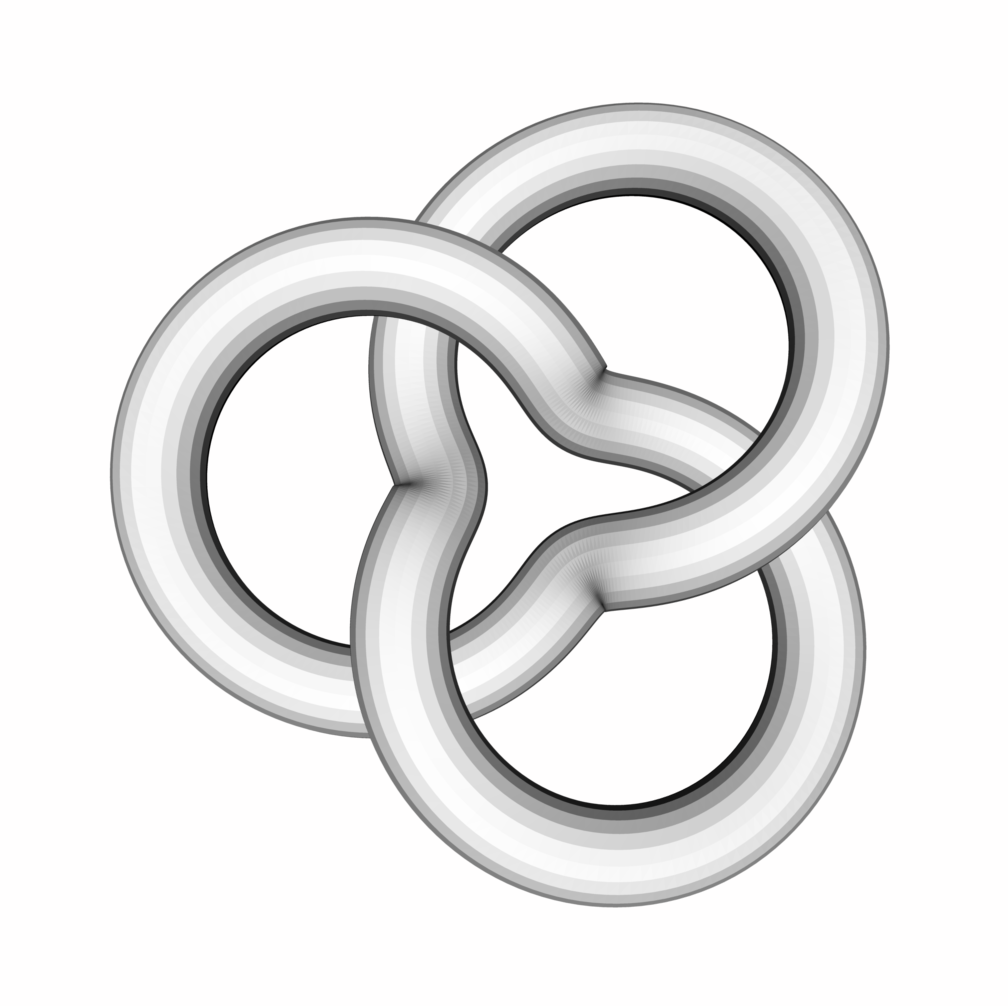} 
\includegraphics[scale=0.1]{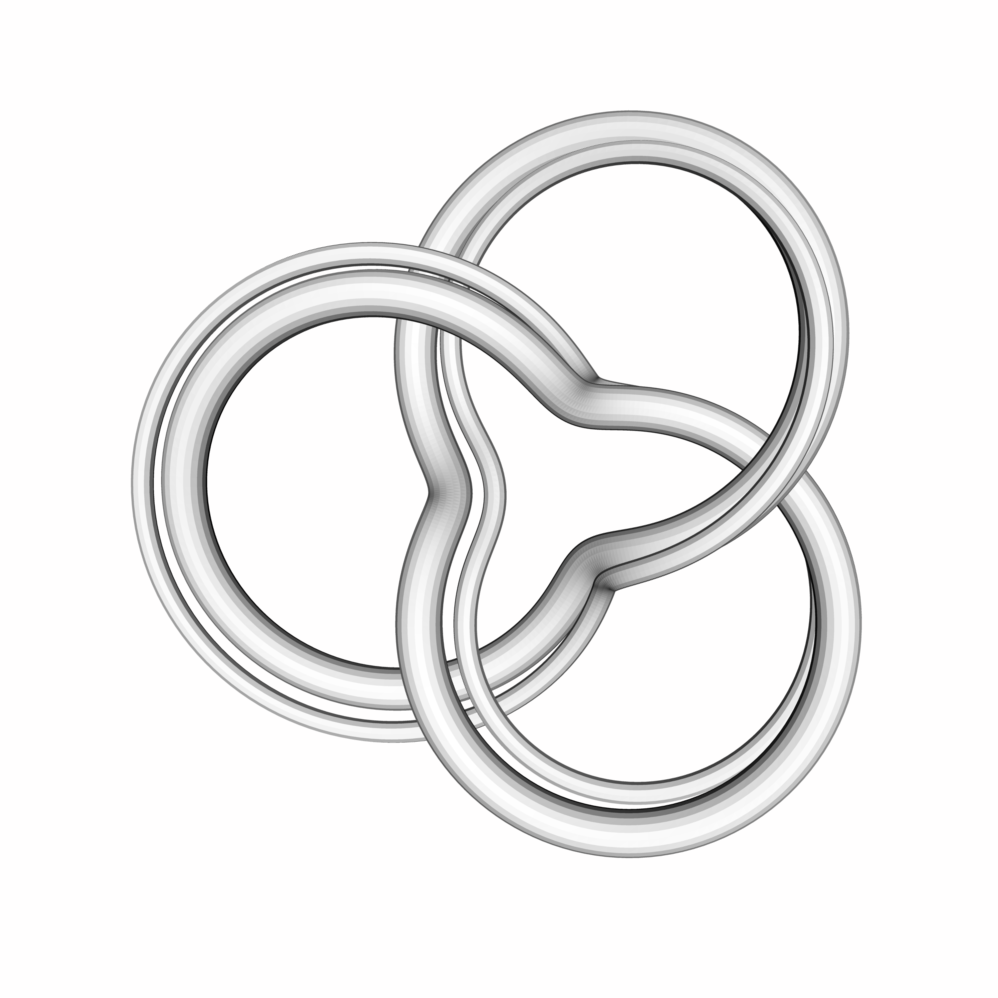} 
\includegraphics[scale=0.1]{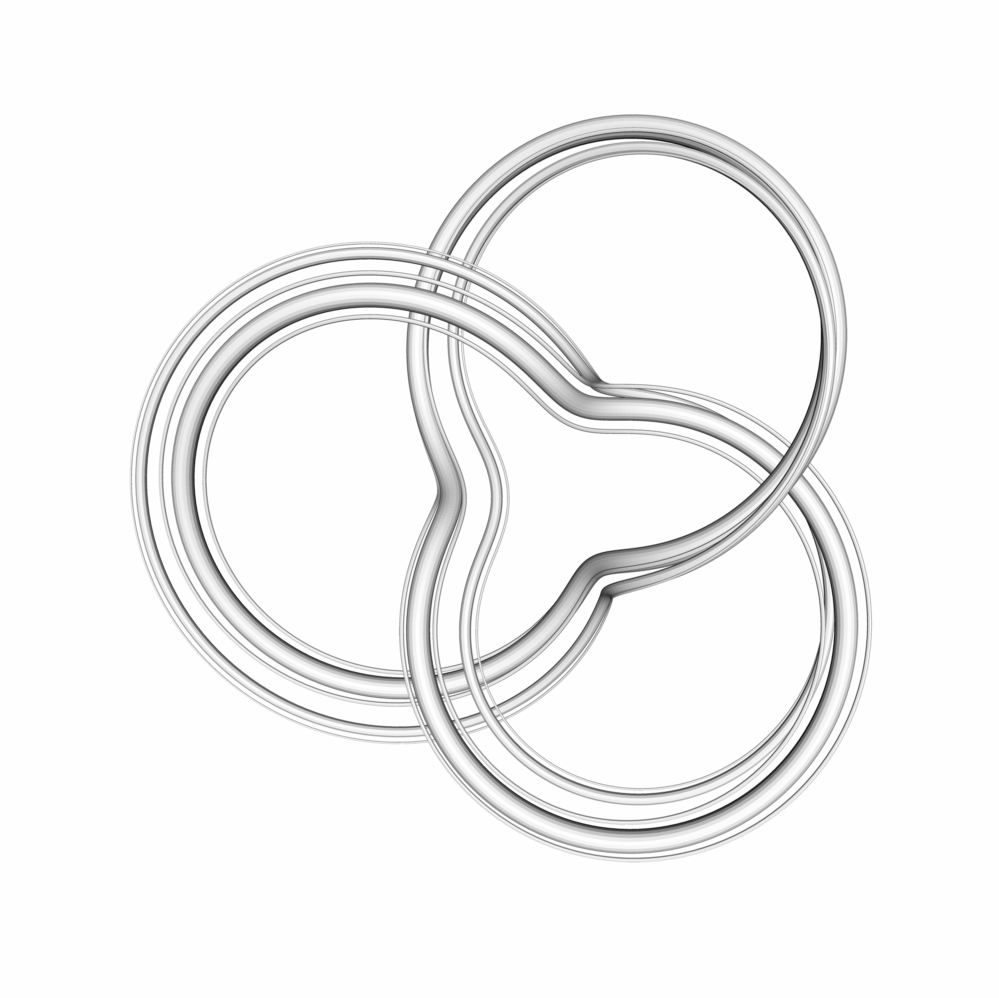} 
\caption{Thickened neighborhoods of $K_j$ for $j=0,1,2$. $X_3$ is 
homeomorphic to $S^3-K_\infty$}\label{trefoil_cables}
\end{figure}

\section{Degree 4 and above}

\subsection{Weyl chamber}

As in the case of degree $3$, we set $\SS_4 = X_4 \times \R$ where $X_4$
is the quotient of $\SS_4$ by the orbits of the squeezing flow.

Order the critical heights with multiplicity so that $h_1 \ge h_2 \ge h_3$
and define a map $\rho:X_4 \to \R^3$ with coordinates $t_j:=-\log_4 h_j$.
If we identify $X_4$ with the subspace for which $h_1=1$ then $t_1=0$
and the image of $\rho$ is the subset of
$(t_2,t_3) \in \R^2$ with $0 \le t_2 \le t_3$. Another normalization is to set
$\sum t_j = 0$ in which case the image of $\rho$ may be identified with the
Weyl chamber $W$ associated to the root system $A_2$.

Within this chamber we have a further stratification. Define 
$t_{ij}:=t_i - t_j$ and refer to the level sets $t_{ij}=n\in \Z$ as {\em walls}.
The walls define a cell decomposition $\tau$ of $W$ into right angled triangles
with dual cell decomposition $\tau'$. 

We shall describe a natural partition of $X_4$ into manifolds with corners
$X_4(v)$, for vertices $v$ of $\tau$, where $X_4(v)$ is defined to be the
preimage under $\rho$ of the cell of $\tau'$ dual to $v$. 
These submanifolds are typically disconnected,
and the way their components are glued up in $X_4$ will give $X_4$ the
structure of a {\em complex of spaces over a contractible $\tilde{A}_2$ building}.

\subsection{Two partitions}

Let's suppose critical leaves are simple, and we label them $C_j$
compatibly with the ordering on heights.

There are two combinatorially distinct ways for $C_1$
to sit in the circle: the angles of the segments are either antipodal, or they are distance
$1/4$ apart (remember we are working in units where the circle has total length $1$).
When $h(C_1)$ is strictly larger than the other $h(C_j)$ the leaf $C_1$ is the unique
leaf of greatest height. Thus the difference of the angles is locally constant;
it follows that the subset of $X_4$ where $h(C_2)<1$ is disconnected. In fact,
it is easy to see it has exactly two components according to the placement of $C_1$.

Where $C_1$ is an antipodal leaf, it pinches the unit circle into two circles of
length $1/2$, each bisected by one of $C_2$ and $C_3$. The restriction of the
dynamical elamination in each of each of these length $1/2$ circles is 
symmetric under the antipodal map. 

When $C_1$ is not antipodal, it pinches the unit
circle into circles of length $1/4$ and $3/4$, with $C_2$ and $C_3$ both contained in
the longer circle. The leaf $C_2$ pinches this circle into circles of length $1/2$
and $1/4$, and $C_3$ divides the length $1/2$ circle antipodally.

\subsection{Monkey prisms, monkey turnovers}

Let's fix a generic $(t_2,t_3)$ in the interior of $W$, 
so that none of $t_2,t_3,t_3-t_2$ are integers.
Denote the fiber of $\rho$ over $(t_2,t_3)$ by $T(t_2,t_3)$. These fibers are
disjoint union of 3-tori, orbits of the $\R^3$ action $\FF_s$ on $\DL_4$ described
in Lemma~\ref{lemma:rotation_torus}. These tori piece together to form a product
throughout each open triangle of $\tau$. We let $\theta_j$ (taking values in $\R^3$ mod
a suitable lattice) denote angle coordinates on one of these tori.

As we pass through a wall where some $t_{ij}\in \N$, circle factors in these tori
pinch as follows. The angle coordinates $\theta$ and the log height coordinates $t$
determine a dynamical elamination. When $t_{ij}=n$ the circle parameterized by
$\theta_i$ is pinched along the precritical leaves of $C_j$ of depth $n$. As we move
around in the fiber, the dynamical elamination varies by a rotation, so the way
in which the $\theta_i$ circle pinches depends only on which component we are in, and
the value of the local coordinates $\theta_j$ with $j<i$. In other words, the
structure locally is that of a certain kind of iterated fiber bundle called a
{\em monkey bundle}.
 
Recall from Definition~\ref{definition:monkey_pants} the terms monkey pants and 
monkey Morse functions.

\begin{definition}[Monkey bundle]\label{definition:monkey_bundle}
A {\em monkey bundle} of {\em order $n$} consists of the following data:
\begin{enumerate}
\item{A finite sequence of fiber bundles $\Omega_2 \to E_2 \to S^1$ and
$\Omega_j \to E_j \to E_{j-1}$ for $3 \le j \le n$ where each $\Omega_j$ is
a monkey pants;}
\item{a map $\pi_j:E_j \to [0,1]$ whose restriction to each $\Omega_j$ fiber
is monkey Morse; and such that}
\item{if $E:=E_n$ is the total space, and $\pi:E \to [0,1]^{n-1}$ denotes the map whose
factors restrict to $\pi_j$ on each $E_j$, then for each $j$ the image of the
critical points in the $\Omega_j$ fibers is a collection of affine hyperplanes.}
\end{enumerate}
\end{definition}

The cube $[0,1]^{n-1}$ together with the hyperplanes which are the images of fiberwise
critical points under $\pi$ should be thought of as a {\em graphic} in the sense
of Cerf theory; see e.g.\/ \cite{Cerf}. We say that a curve in $[0,1]^{n-1}$
crosses a hyperplane of the graphic {\em positively} if it corresponds to the
positive direction in the factor $\pi_j:E_j \to [0,1]$ to which the hyperplane is
associated.

\begin{definition}[Monkey prism; monkey turnover]\label{definition:monkey_turnover}
Suppose $E$ is a monkey bundle with projection $\pi:E \to [0,1]^{n-1}$.
Suppose $\Delta \subset [0,1]^{n-1}$ is a convex polyhedron for which there is a 
vertex $v \in \Delta$ so that the ray from $v$ to every other point in $\Delta$
crosses the graphic in the positive direction. Then we call
$P:=\pi^{-1}(\Delta)$ a {\em monkey prism}.

Suppose $\pi:P \to \Delta$ is a monkey prism, 
and some collection of finite groups act on 
some boundary strata of $P$ preserving $\pi$. Then the quotient space $Q$
of $P$ together with the data of its induced projection to $\Delta$ is
called a {\em monkey turnover}. 
\end{definition}

\begin{lemma}[Prism is $K(\pi,1)$]\label{lemma:prism_K_pi_1}
A monkey prism of order $n$ is a $K(\pi,1)$ with the homotopy type of an
$n$-complex. A monkey turnover of order $n$ has the homotopy type of an
$n$-complex.
\end{lemma}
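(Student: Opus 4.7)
The plan is to induct on the order $n$ of the monkey bundle of Definition~\ref{definition:monkey_bundle}, using the crucial fact that every subsurface of a monkey pants obtained by restricting a monkey Morse function to a subinterval is a surface with nonempty boundary, hence homotopy equivalent to a finite graph (a $1$-dimensional $K(\pi,1)$).

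For the base case $n=2$, a monkey prism is $P = \pi_2^{-1}(\Delta)$ inside the monkey pants bundle $E_2 \to S^1$, where $\Delta = [v_2,b] \subset [0,1]$ and the positive-crossing condition says the interval opens upward from the vertex $v_2$. Over each $\theta \in S^1$ the fiber is a subsurface of the monkey pants $\Omega_2(\theta)$ containing $\pi_2^{-1}(v_2) \cap \Omega_2(\theta)$ in its boundary, so it is homotopy equivalent to a graph. Thus $P$ is a graph bundle over $S^1$ --- a $K(\pi,1)$ with the homotopy type of a $2$-complex.

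For the inductive step, compose with the bundle projection $E_n \to E_{n-1}$ to obtain $\sigma : P \to P'$, where $P' \subset E_{n-1}$ is the preimage of the projection $\Delta' \subset [0,1]^{n-2}$ of $\Delta$. One may arrange (possibly after coning $\Delta'$ off at the projected vertex $v'$, which does not change the homotopy type of the fibers) that $P'$ is itself a monkey prism of order $n-1$. For each $p \in P'$ the slice $\Delta_p := \{t : (\pi(p),t) \in \Delta\}$ is a convex subinterval of $[0,1]$, and the fiber $\sigma^{-1}(p) = \pi_n^{-1}(\Delta_p) \cap \Omega_n(p)$ is, by the same reasoning as the base case, a surface with nonempty boundary, hence homotopy equivalent to a graph. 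Applying the inductive hypothesis to $P'$ together with the fact that a fibration with $K(\pi,1)$ graph fibers over a $K(\pi,1)$ base is again a $K(\pi,1)$ (from the long exact sequence of homotopy groups), we conclude that $P$ is a $K(\pi,1)$ with the homotopy type of an $n$-complex.

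The main technical obstacle is that $\sigma$ is not literally a fiber bundle: as $p$ crosses a component of the graphic pulled back from $[0,1]^{n-2}$ to $P'$, the topology of $\sigma^{-1}(p)$ jumps. The positive-crossing condition emanating from $v$ is precisely what guarantees that each such jump is a $1$-handle attachment (at a simple or monkey saddle of $\pi_n$) rather than a surgery in the reverse direction, so the pieces of $P$ over adjacent strata of $P'$ are glued by inclusions of subgraphs into larger graphs. A homotopy colimit argument over the stratification of $P'$ by the graphic then completes the step. Finally, a monkey turnover $Q = P/G$ is obtained by quotienting boundary strata of $P$ by compatible finite group actions; since such quotients do not increase homotopical dimension, $Q$ still has the homotopy type of an $n$-complex. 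The $K(\pi,1)$ property generally fails after the quotient, which is why the lemma does not claim it for $Q$.
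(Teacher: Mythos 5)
Your proof takes a genuinely different route from the paper's. The paper first observes (trivially) that the whole monkey \emph{bundle} $E$ is a $K(\pi,1)$, then passes to the universal cover $\tilde{E}$, identifies its interior with $\R^2 \times \cdots \times \R^2 \times \R$ (each $\R^2$ factor carrying a singular foliation with tree leaf space), and shows directly that the preimage $\tilde{F}$ of the prism $F$ is contractible: moving outward along a ray from $v$, the positive-crossing condition forces the pieces in each $\R^2$ fiber to split but never recombine, so $\tilde{F}\cong\R^{2n-1}$. There is no induction and no attempt to make $P\to P'$ into a fibration.

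The structural idea in your proposal --- induct on $n$ via the projection $\sigma:P\to P'$, with $P'$ a monkey prism of order $n-1$ and graph fibers --- is sensible, and you have correctly identified the conceptual role of the positive-crossing condition: it ensures that fibers only acquire handles as you move away from $v$, so that the fiberwise transitions are $\pi_1$-injective subgraph inclusions. This is the fiberwise shadow of the paper's ``pieces split but never recombine.'' But there is a genuine gap at the decisive step. You write ``A homotopy colimit argument over the stratification of $P'$ by the graphic then completes the step,'' yet this is exactly where the lemma's content lives and nothing has been proved. The stratification of $P'$ by the graphic is not a graph of spaces --- the dual structure is a higher-dimensional polyhedral complex --- so you cannot simply cite a Scott--Wall type theorem, and the long-exact-sequence argument you invoke in the preceding sentence only applies to honest fibrations, which you correctly note $\sigma$ is not. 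To make the colimit argument rigorous you would need to specify the diagram, show the relevant inclusions are cofibrations, and prove that the colimit is aspherical; the natural way to do the last of these is to pass to the universal cover and argue that the contractible fibers glue to a contractible total space, which is essentially a disguised form of the paper's argument. As written, the key step is asserted rather than proved.

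Two smaller points. The ``coning off'' remark doesn't do what you want: since $v'$ always lies in the convex set $\Delta'$, coning from $v'$ returns $\Delta'$; what you actually need is the (true, but unstated) observation that the positive-crossing condition for $\Delta$ descends to $\Delta'$ because rays from $v'$ in $\Delta'$ are projections of rays from $v$ in $\Delta$, and that replacing $\Delta'$ by a nearby polyhedron with $v'$ as a genuine vertex is harmless up to homotopy. Your treatment of the monkey turnover (a finite cellular quotient of an $n$-complex is an $n$-complex, hence has homotopy dimension $\le n$, while asphericity may be lost) is correct and matches the paper.
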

\begin{proof}
A monkey pants is homotopic to a graph, and iterated fibrations of $K(\pi,1)$s are
$K(\pi,1)$s. Thus a monkey bundle is a $K(\pi,1)$ with the homotopy type of an
$n$-complex. 

The universal cover $\tilde{E}$ of a monkey bundle $E$ is a (noncompact) 
manifold with corners, and interior homeomorphic to a product 
$\R^2 \times \cdots \times \R^2 \times \R$ where each 
$\R^2$ factor has a singular foliation with leaf space an oriented tree. 

If $F \subset E$ is a monkey prism associated to a polyhedron
$\Delta \subset [0,1]^{n-1}$ then the preimage 
$\tilde{F} \subset \tilde{E}$ is bounded in each $\R^2$ factor by a collection of
lines of the foliation, and is homeomorphic to a disjoint union of $\R^2$s. 
As we move along a straight ray in $\Delta$ from the distinguished vertex
we might cross hyperplanes of the graphic,
but by hypothesis we only cross in the positive direction. As we cross a
hyperplane, the part of $\tilde{F}$ in some $\R^2$ fibers splits apart, but
pieces can never recombine; thus $\tilde{F}$ is homeomorphic to $\R^{2n-1}$ 
so that $F$ is also a $K(\pi,1)$ with the homotopy type of an $n$-complex.

Since orbifolding is compatible with $\pi$, a monkey turnover also has the
homotopy type of an $n$-complex.
\end{proof}

From the description of the fibers of $\rho$ and how they pinch as we cross a wall,
the following is immediate:
\begin{lemma}
Let $\Delta$ be a cell of the dual cellulation $\tau'$. Then 
$\rho^{-1}(\Delta)$ is a disjoint union of monkey prisms and monkey turnovers 
with respect to the map $\rho$. 
\end{lemma}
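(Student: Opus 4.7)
The plan is to combine the torus fibration structure from Lemma~\ref{lemma:rotation_torus} with a direct analysis of how circle factors pinch at wall-crossings, and to verify that the result matches Definition~\ref{definition:monkey_bundle}. First I would fix an open triangle $\sigma$ of $\tau$ in the interior of $W$, so that no $t_{ij}$ takes an integer value on $\sigma$. There each connected component of the fiber of $\rho$ is a $3$-torus $T_L=\R^3/\Gamma_L$ by Lemma~\ref{lemma:rotation_torus}, with angle coordinates $\theta_1,\theta_2,\theta_3$ recording the rotations of the families of leaves generated by $C_1,C_2,C_3$; these assemble into a principal torus bundle over $\sigma$ that is a local product.

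Next I would examine what happens as one crosses a wall $t_{ij}=n$ with $i>j$. The argument in the proof of Proposition~\ref{proposition:tautological_lamination}, applied with $C_i$ in the role of $C_2$ and preimages of $C_j$ in the role of preimages of $C_1$, shows that at $t_{ij}=n$ the depth-$n$ precritical leaves of $C_j$ reach the height $h(C_i)$, so that as $t_{ij}$ decreases past $n$ the $\theta_i$ circle pinches along the arcs cut out by those precritical leaves. Their positions depend only on the dynamical elamination through depth $n-1$, and hence only on the angles $\theta_k$ with $k<i$ and the log-heights. This is exactly the inductive data of a monkey bundle: the $\theta_i$ factor is a monkey pants fibered over the base built from the $\theta_k$ with $k<i$, and the maps $\pi_j:E_j\to[0,1]$ are the height functions on the pinched circles, monkey Morse by construction (simple critical leaves contribute saddles, multiple ones contribute monkey saddles of the appropriate order).

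Now let $\Delta\in\tau'$ be dual to a vertex $v\in\tau$, and consider $\rho^{-1}(\Delta)$. The vertex $v$ is the common corner of all triangles of $\tau$ that meet $\Delta$, and walking from $v$ outward into $\Delta$ corresponds to simultaneously decreasing various $t_{ij}$ past integer thresholds, i.e.\ to lowering $h(C_i)$ through heights at which new pinchings occur. This is precisely the positive direction of the graphic in Definition~\ref{definition:monkey_turnover}: new hyperplane crossings add pinchings and never undo old ones. Combined with the bundle picture of the previous paragraph, each component of $\rho^{-1}(\Delta)$ is therefore a monkey prism with distinguished vertex $v$.

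Finally, some components of $\rho^{-1}(\Delta)$ meet the faces of $W$ where two or more of the $h(C_i)$ coincide, and there the labeling of the coinciding critical leaves by distinct indices is ambiguous up to a finite permutation — for instance the $\Z/2$ that switches $\theta_1$ and $\theta_2$ in the toy picture preceding Figure~\ref{quotient_torus}, or the larger symmetries that arise when $C_1$ is forced to be antipodal. As described in the subsection on the two partitions, these permutations act on the appropriate boundary strata compatibly with $\rho$ and with the maps $\pi_j$, converting a monkey prism into a monkey turnover. The main obstacle is the bookkeeping: one has to verify case by case that the finite group acting at each equal-height stratum preserves the iterated-bundle projection, and that no additional identifications between distinct components of $\rho^{-1}(\Delta)$ occur beyond those induced by these symmetries.
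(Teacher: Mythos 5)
Your overall plan --- read off the monkey-bundle structure from the torus fibration of Lemma~\ref{lemma:rotation_torus}, locate the pinching at each wall $t_{ij}=n$, and then verify the prism/turnover conditions cell by cell --- is exactly the discussion the paper relies on; the paper declares the lemma ``immediate'' from that discussion and supplies no separate argument. Your second paragraph is essentially right, modulo one sign slip: since $t_i=-\log_d h_i$, lowering $h(C_i)$ makes $t_{ij}=t_i-t_j$ \emph{increase}, so the pinching threshold is crossed as $t_{ij}$ increases through $n$, not decreases.

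The genuine problem is in the third paragraph, where you take the distinguished vertex of the monkey prism to be the dual vertex $v\in\tau$. But $v$ sits at the \emph{barycenter} of the hexagon $\Delta\in\tau'$, not at a corner, and all the walls forming the graphic inside $\Delta$ pass through $v$; so $v$ is not a vertex of the polyhedron $\Delta$, and a ray issuing from $v$ stays inside a single chamber of the wall arrangement and never crosses the graphic at all. This does not satisfy the hypothesis of Definition~\ref{definition:monkey_turnover}. The vertex you actually want is the \emph{corner} of $\Delta$ lying in the chamber where all three $t_{ij}$ are less than their integer values at $v$ (equivalently, the corner nearest the origin of $W$); since $t_{31}=t_{21}+t_{32}$ this chamber is nonempty, it contains exactly one corner of the hexagon, and every ray from that corner crosses each wall it meets in the direction of increasing $t_{ij}$, i.e.\ positively. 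Once the distinguished vertex is corrected in this way (and the sign fixed), your argument does establish the lemma, and the remaining bookkeeping at the equal-height strata giving the turnovers is as you describe.
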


Figure~\ref{quotient_torus} is a simple example of the way a fiber 
can be quotiented in a monkey turnover.

There does not seem to be any obvious reason why monkey turnovers in generality
should be $K(\pi,1)$s. However it will turn out that the turnovers that occur
in the partition of $X_4$ {\em are} $K(\pi,1)$s. The reason for this is subtle,
and only proved in \S~\ref{section:sausages}.

There is another natural cellulation $\kappa$ of $W$ associated to the subset of walls
of the form $t_{i1}\in \N$; i.e.\/ the walls of the integer lattice in $\R^2$.
They decompose $W$ into squares and right-angled triangles. Let $\kappa'$ be
the dual cellulation; the cells of $\kappa'$ are triangles, squares and rectangles,
and the cells of $\kappa'$ are in bijection with the cells of $\tau'$.
Since $\tau$ and $\kappa$ have the same set of vertices, there is a bijection
between the top dimensional cells of $\tau'$ and '$\kappa'$.

In the sequel it will be convenient to compare the monkey prisms and turnovers
associated to $\tau'$ with those associated to $\kappa'$.

\begin{lemma}[Equivalent Cells]\label{lemma:equivalent_cells}
Let $K$ and $T$ be cells of the cellulations $\kappa'$ and $\tau'$ associated to
a vertex $v$. Then the components of $\rho^{-1}(K)$ and of $\rho^{-1}(T)$ are
homeomorphic, and are isotopic inside $X_4$.
\end{lemma}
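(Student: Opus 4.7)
\emph{Proof sketch.} The plan is to show that both $\rho^{-1}(K)$ and $\rho^{-1}(T)$ admit the same local description near the fiber over $v$, and then upgrade this to an ambient isotopy of $X_4$ via a controlled deformation of the base cells.

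First I would work inside a small open neighborhood $U \subset W$ containing $v$ but no other vertex of $\tau$ or $\kappa$. Inside $U$, the walls of $\tau$ through $v$ are at most three lines, namely $t_{21} = n_1$, $t_{31} = n_2$, and $t_{32} = n_2 - n_1$ where $v = (n_1, n_2)$; the first two are also walls of $\kappa$, while the third is present in $\tau$ only. By the preceding lemma, $\rho \colon \rho^{-1}(U) \to U$ carries the structure of a monkey bundle, and its graphic inside $U$ consists precisely of these three lines. Both cells $K$ (a convex polygon dual to $v$ in $\kappa'$) and $T$ (a convex polygon dual to $v$ in $\tau'$) lie inside $U$ and contain $v$ in their interior.

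Next I would construct a continuous family $\{P_s\}_{s \in [0,1]}$ of convex polygons in $U$ with $P_0 = K$, $P_1 = T$, $v \in \mathrm{int}(P_s)$ for every $s$, and such that every $P_s$ satisfies the positive-crossing condition of Definition~\ref{definition:monkey_turnover} with distinguished vertex $v$. Such an interpolation is easily arranged by a linear deformation of vertices (passing through intermediate hexagons or octagons), since $K$ and $T$ are both star-shaped around $v$ and the three walls of $\tau$ through $v$ are the only obstructions to convexity inside $U$. For each $s$, the preimage $\rho^{-1}(P_s)$ is then a disjoint union of monkey prisms or turnovers with distinguished vertex projecting to $v$, and its connected components remain in natural bijection throughout the family.

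The main obstacle is to show that the family $s \mapsto \rho^{-1}(P_s)$ is realized by an ambient isotopy inside $X_4$. Away from the walls inside $U$, the map $\rho$ is a locally trivial fiber bundle, so variations of $P_s$ that avoid walls lift by parallel transport to an isotopy of the preimage. The delicate case is when $\partial P_s$ sweeps across one of the three walls through $v$: here I would use the explicit local form of the monkey bundle at a graphic hyperplane, namely that crossing in the positive direction opens up a boundary circle of a fiber monkey pants along a (possibly monkey) saddle singularity. The preimage of a small neighborhood just past the wall is thus obtained from the preimage just before by a collar attachment along the corresponding circle, and the inverse collapse gives the required isotopy. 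Composing these over all times at which $\partial P_s$ crosses walls assembles into an ambient isotopy of $X_4$ carrying $\rho^{-1}(K)$ onto $\rho^{-1}(T)$, matching components as claimed.
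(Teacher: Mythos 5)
Your proposal follows essentially the same line as the paper's (very terse) proof: construct an isotopy of cell frontiers from $\partial K$ to $\partial T$ that stays transverse to the graphic (your ``positive-crossing'' condition for every $P_s$ is a slightly stronger formulation of the paper's ``never introduces any new tangency''), and then invoke the product structure of the fibration away from the graphic to lift this base-level isotopy to an ambient isotopy of the preimages. The paper stops there; you add a local argument near the walls, which is the one place where ``parallel transport'' is not literally available.

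That extra paragraph is the one spot worth tightening. Because every $P_s$ in your family meets each of the three walls through $v$ transversally in exactly two points and meets no other walls (you stay inside $U$), the combinatorial pattern of $\partial P_s$ against the graphic is \emph{constant} in $s$; the crossing points merely slide along the walls. So there is no genuine ``sweep across a wall'' event, and the ``collar attachment / inverse collapse'' picture is addressing a scenario your own construction of $\{P_s\}$ has already ruled out. What you actually need near a wall-crossing point of $\partial P_s$ is just the observation that the local model of the monkey bundle near a graphic hyperplane --- a saddle or monkey saddle pinching, extended trivially in the wall direction --- is itself a product in the direction along the wall, so that sliding the crossing point along the wall lifts to an isotopy. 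If you rephrase the last paragraph this way (``sliding transversal crossings, not sweeping across walls''), the argument is clean and matches the intent of the paper's one-line proof.
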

\begin{proof}
There is an isotopy of the frontiers of the cells from one to the other 
which never introduces any new tangency with the graphic. Since fibers are 
arranged in a product structure away from the graphic, the lemma follows. 
\end{proof}

The prisms and turnovers associated to cells of $\kappa'$ are naturally
homeomorphic to the {\em moduli spaces} introduced in \S~\ref{subsection:moduli_spaces}.

\subsection{$K(\pi,1)$}\label{subsection:K_pi_1}

Decompose $W$ into cells dual to the cellulation by walls; note that
typical cells (those dual to interior vertices of $W$) are hexagons. 
The preimage under $\rho$
of each of these cells is a disjoint union of monkey prisms and monkey
turnovers, and the walls in each cell are the graphic. 
Thus $X_4$ is a {\em complex of spaces} in the sense of Corson \cite{Corson}.
The associated complex is built from copies of cells of $\tau$ according to the
pattern of inclusion of connected components; thus it is an example of an
{\em $\tilde{A}_2$ building}, which comes with an immersion to $W$. 
See e.g.\/ Brown \cite{Brown} for an introduction to the theory of buildings.

\begin{theorem}[Complex of spaces]\label{theorem:degree_4_complex_of_spaces}
$X_4$ is a complex of monkey prisms and monkey turnovers 
over a contractible $\tilde{A}_2$ building $B$.
\end{theorem}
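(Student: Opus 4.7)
My plan is to build the base $B$ as the nerve of the decomposition of $X_4$ into monkey prisms and turnovers, verify the axioms of an $\tilde{A}_2$ building combinatorially using the dynamical elamination description, and conclude contractibility from general facts about Euclidean buildings.

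First, I assemble $B$. By the preceding lemma, for each cell $\Delta$ of $\tau'$ the preimage $\rho^{-1}(\Delta)$ is a disjoint union of monkey prisms and monkey turnovers; I take the cells of $B$ to be these connected components, with face relations induced from $\tau'$ (equivalently, dually, from $\tau$). The projection sends each cell of $B$ to the cell of $\tau$ dual to its associated cell of $\tau'$, yielding a simplicial ``type'' map $\pi\colon B\to W$. By Lemma~\ref{lemma:equivalent_cells} one can equivalently use $\kappa'$ and its monkey prism/turnover pieces, and in either picture the preimages assemble into a complex of spaces over $B$ with stalks the monkey prisms and turnovers, so the content of the theorem reduces to identifying $B$ as an $\tilde{A}_2$ building and showing it is contractible.

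Second, I describe apartments. Starting from a generic configuration of the dynamical elamination at a point $(t_2,t_3)$ in the interior of a top cell of $\tau$, I analytically continue the angle coordinates $(\theta_2,\theta_3)$ along a path in $W$, letting each wall crossing perform the combinatorially prescribed pinch along the relevant depth $n$ precritical leaf of $C_j$ (using the rotation formula $\FF_s$ of Lemma~\ref{lemma:rotation_torus}). The resulting collection of top cells, together with their face inclusions, glues up to a simplicial subcomplex $A\subset B$ abstractly isomorphic to the standard $\tilde{A}_2$ Coxeter complex (a copy of $\R^2$ tiled by alcoves), and $\pi|_A$ is the standard folding of that Coxeter complex onto its fundamental chamber $W$. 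Varying the initial coherent choice of trackings produces the family of apartments.

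Third, I verify the building axioms. (B2) --- two apartments sharing a chamber are isomorphic rel intersection --- is essentially automatic from the analytic continuation construction, since each apartment is uniquely determined by a chamber and a coherent choice of tracking on it. The key step is (B1): any two chambers lie in a common apartment. My plan is an induction on combinatorial distance: given two chambers, first use the torus rotation flow $\FF_s$ to put them over the same point $(t_2,t_3)\in W$ in a common fiber, and then extend a coherent tracking across each intervening wall crossing. The main obstacle is checking that the combinatorics of gluings of components of $\rho^{-1}$ across walls really does match the combinatorics of an $\tilde{A}_2$ Coxeter complex at every vertex link (spherical $A_2$ in the interior, $A_1\times A_1$, $A_2$ and $A_1$ on the walls of $W$); this requires a careful analysis of how the depth $n$ precritical leaves of $C_j$ interact when two or three pinches become simultaneous.

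Finally, once $B$ is established as an $\tilde{A}_2$ building, contractibility is standard: the Bruhat--Tits metric makes $B$ a complete $\CAT(0)$ space, hence contractible. Alternatively one may produce a deformation retraction of $B$ onto a single apartment $A$ by iteratively applying the folding retraction along apartments sharing a chamber with the previously retracted region, and then contract $A\cong \R^2$ linearly.
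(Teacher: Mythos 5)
The paper's own proof of this theorem is a single sentence: ``The direction of pinching is transverse to the walls, so there is a unique path in the building from every point to the origin projecting to a ray in $W$.'' That is, for every cell of $B$ one takes the straight ray in $W$ from its image to the cone point of the Weyl chamber; because crossing a wall inward (toward the origin) only ever \emph{merges} components of $\rho^{-1}$ --- pinching runs strictly outward --- this ray lifts uniquely, and these lifts assemble into a deformation retraction of $B$ onto the single cell over the origin (the connected $K(B_4,1)$ turnover). Contractibility falls out directly from the dynamics of pinching, with no appeal to building theory. The assertion that $B$ is (combinatorially) an $\tilde{A}_2$ building is established in the preceding paragraph of the paper simply by observing that $B$ is glued from copies of the cells of $\tau$, which tile $W$ as the quotient of the $\tilde{A}_2$ Coxeter complex.

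Your route is genuinely different: you propose to certify $B$ as a building by verifying axioms (B1) and (B2) via analytic continuation along apartments, then invoke the Bruhat--Tits $\CAT(0)$ theorem for contractibility. This would, if completed, prove something strictly stronger than what the paper proves (the full building axioms), but at considerable cost, and the cost is exactly where your proposal has a gap. You acknowledge that the key step is checking that the local combinatorics of gluings across walls reproduces the correct vertex links ($A_2$, $A_1\times A_1$, etc.), and you describe this as ``requiring a careful analysis of how the depth $n$ precritical leaves of $C_j$ interact when two or three pinches become simultaneous'' --- but you do not carry out that analysis. That is not a minor omission; it is the entire mathematical content of the building verification, and without it axiom (B1) is unproven. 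The paper's argument is attractive precisely because it sidesteps all of this: transversality of pinching to walls is immediate from the construction (walls are level sets of $t_{ij}$, and $\rho$ composed with the squeezing flow moves monotonically), and uniqueness of inward lifts follows from the fact that pinching is one-directional. If you want to retain your plan, the inductive step on combinatorial distance and the link analysis must be written out; alternatively, note that the paper's contraction via ray-lifting makes (B1)/(B2) unnecessary for the contractibility claim, and the ``$\tilde{A}_2$ building'' terminology is being used in the weaker sense of a complex immersing into $W$ with the prescribed local cell structure.
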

\begin{proof}
The direction of pinching is transverse to the walls, so there is a unique path
in the building from every point to the origin projecting to a ray in $W$. 
\end{proof}

In retrospect, the inductive picture of $X_3$ we obtained in \S~\ref{section:degree_3}
as an infinite union of knot and link complements, exhibits it as a complex
of monkey prisms and monkey turnovers (actually, only one monkey turnover) 
over a contractible $\tilde{A}_1$ building (i.e.\/ a tree).

The next theorem is the analog in degree $3$ of 
Theorem~\ref{theorem:S_3_CAT0_2complex}.
 
\begin{theorem}[$K(\pi,1)$]\label{theorem:S_4_K_pi_1}
$\SS_4$ is a $K(\pi,1)$ with the homotopy type of a 3-complex.
\end{theorem}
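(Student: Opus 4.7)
The plan is to work entirely with $X_4$, since $\SS_4 \cong X_4 \times \R$ and both asphericity and homotopy dimension $\le 3$ are insensitive to a product with $\R$. The starting point is Theorem~\ref{theorem:degree_4_complex_of_spaces}, which exhibits $X_4$ as a complex of spaces whose base $B$ is a contractible $\tilde{A}_2$ building (hence two-dimensional and aspherical) and whose vertex spaces are monkey prisms and monkey turnovers of order at most $3$.

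First I would invoke the standard asphericity criterion for complexes of spaces over a simply connected base (Corson \cite{Corson}, or equivalently the developability framework of Haefliger): if each vertex space is aspherical and each attaching map into a larger cell is $\pi_1$-injective, then the total space is aspherical, with homotopy dimension bounded by the maximum homotopy dimension of a vertex space. For monkey prisms of order $3$, asphericity together with the 3-dimensional bound is Lemma~\ref{lemma:prism_K_pi_1}. The $\pi_1$-injectivity of the attaching maps across a wall of $\tau$ is a local verification: the effect of crossing a wall is to pinch a family of disjoint embedded annuli in a torus fiber along simple closed curves coming from the precritical leaves, each of which is $\pi_1$-injective in its ambient fiber by the product structure transverse to the graphic.

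The main obstacle, and the only step that is not essentially formal, is asphericity of the monkey turnovers occurring in the decomposition of $X_4$. Lemma~\ref{lemma:prism_K_pi_1} only yields the correct homotopy dimension for these pieces, and quotienting by the finite group actions on boundary strata can in principle introduce higher homotopy, so the conclusion is not available by soft means. The approach I would take is the one signaled in the paragraph preceding Lemma~\ref{lemma:equivalent_cells}: apply that lemma to replace each turnover over a cell of $\tau'$ with the corresponding piece over a cell of $\kappa'$, identify the latter with one of the algebraic moduli spaces constructed in Section~\ref{section:sausages} (a space of the same type as the $Y_n$ of the Regular Value Theorem), and deduce asphericity from the existence of a locally $\CAT(0)$ spine. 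Once this is established, the Corson criterion applies and yields that $X_4$ is a $K(\pi,1)$ with the homotopy type of a 3-complex; crossing with $\R$ via the squeezing flow preserves both properties, giving the theorem for $\SS_4$.
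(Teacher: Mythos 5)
Your proposal follows essentially the same route the paper takes: reduce $\SS_4$ to $X_4$ via the squeezing flow, use the complex-of-spaces structure over the contractible $\tilde{A}_2$ building from Theorem~\ref{theorem:degree_4_complex_of_spaces}, get the 3-dimensional bound and the prism case from Lemma~\ref{lemma:prism_K_pi_1}, and defer asphericity of the turnovers to the identification (via Lemma~\ref{lemma:equivalent_cells} and Theorem~\ref{theorem:monkey_moduli}) with the algebraic moduli spaces, where Theorem~\ref{theorem:Hurwitz_3_CAT_0} and Theorem~\ref{theorem:moduli_spaces} deliver the $K(\pi,1)$ property. This is exactly the chain of reasoning the paper assembles across \S~\ref{subsection:K_pi_1} and \S~\ref{section:sausages}. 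One small caution: your local ``pinching annuli'' sketch for $\pi_1$-injectivity of the attaching maps is aimed at the right target but is vaguer than the geometric argument the paper actually relies on (convexity of the preimages in the $\CAT(0)$-like universal covers from the prism lemma and the $\CAT(0)$ spines of the Hurwitz varieties), and your parenthetical that the moduli spaces are ``of the same type as $Y_n$'' slightly overstates matters --- they are \emph{iterated bundles} whose fibers are Hurwitz varieties $H(Q,\sigma,3)$, and it is the fibers, not the total spaces, that coincide with the $Y_n$. Neither of these affects the correctness of the overall strategy.
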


We have already seen that the monkey prisms (and consequently also monkey turnovers)
in $X_4$ have the homotopy type of 3-complexes. The same is therefore true of $X_4$. 

$X_4$ is assembled from monkey prisms and monkey turnovers associated to the
vertices of $B$. The edges and triangles are associated to lower dimensional
monkey prisms and turnovers included as facets in the boundary. 
The monkey prisms and their boundary strata are all $K(\pi,1)$s by 
Lemma~\ref{lemma:prism_K_pi_1}, and the inclusions of boundary strata are
evidently injective at the level of $\pi_1$. It remains to show that the same
holds for the monkey turnovers.

We defer the proof of this to \S~\ref{section:sausages}, but for the moment we 
give some examples to underline how complicated the monkey turnovers can be.

\begin{example}[$K(B_4,1)$]
The turnover associated to the vertex $(0,0)$ homotopy retracts onto the fiber
$\rho^{-1}(0,0)$. This is the (3 real dimensional) 
configuration space of degree 4 dynamical elaminations
with all critical leaves of height $1$. This turns out to be a spine for 
the configuration space of $4$ distinct unordered points in $\C$; i.e.\/ it is
a $K(B_4,1)$ (an analogous statement holds in every degree). 
There are several ways to see this; one elegant method is due to Thurston,
and explained in \cite{Thurston_laminations}. We shall see a quite different
and completely transparent demonstration of this fact in \S~\ref{section:sausages}.
\end{example}

\begin{example}[Star of David]\label{example:lamination_star_of_david}
There are two monkey turnovers associated to the vertex $(1,1)$ of $\tau$ in $W$,
corresponding to the two combinatorially distinct ways for
$C_1$ to sit in $S^1$.

When $C_1$ is antipodal, the leaves $C_2$ and $C_3$ sit on either side and do not 
interact with each other. For each fixed value of $C_1$ the other two leaves 
vary as a product $P \times P$ of pairs of pants. Monodromy around the $C_1$
circle switches the two factors by an involution.

\begin{figure}[htpb]
\centering
\includegraphics[scale=0.375]{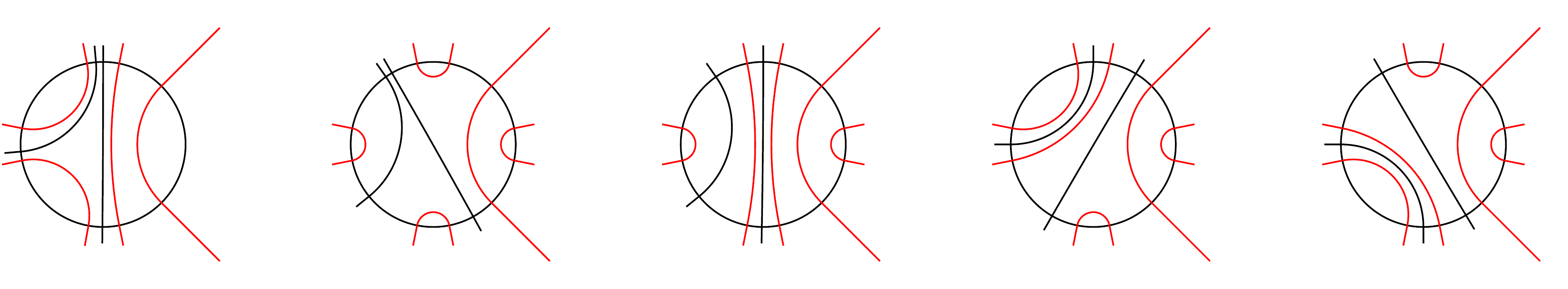}
\caption{One of the two monkey turnovers associated to the vertex $(1,1)$ 
is a $Y_2$ bundle over $S^1$, where $Y_2$ is built from five pieces 
associated to the configurations indicated in the figure. 
The first two pieces are $K(B_3,1)$s and the last three are
$K(\Z^2,1)$s. $C_1$ and its preimages with greater height 
than $C_2,C_3$ are in red.}\label{five_pieces}
\end{figure}

When $C_1$ is not antipodal, the leaves $C_2$ and $C_3$ may interact, 
and the topology is significantly more complicated. 
This component is also a bundle over $S^1$ whose fiber is a 
certain 4-manifold $Y_2$ that we call the {\em Star of David}
(the explanation for the name will come in \S~\ref{section:sausages}). 
It is built from five pieces; two of these pieces are homotopic to
trefoil complements (i.e.\/ they are $K(B_3,1)$s). 
The other three pieces are homotopic to tori, which attach to the other
components along a subspace homotopic to a wedge of two circles; in other
words this decomposition does {\em not} form an injective complex of $K(\pi,1)$s. 
In fact, the fundamental group of $Y_2$ is obtained from the free product 
of two $B_3$s by adding three commutation relations. 
The five pieces are illustrated in Figure~\ref{five_pieces}.
\end{example}

\subsection{Degree $d$}\label{subsection:degree_d}

Most of what we have done in this section generalizes to degree
$d$ readily. Set $\SS_d = X_d \times \R$, and order critical heights with
multiplicity so that $1=h_1 \ge h_2 \ge \cdots \ge h_{d-1}$. Define
$\rho:X_d \to \R^{d-2}$ with coordinates $t_j:=-\log_d h_j$ for
$j=2,\cdots,d-1$. The image of $X_d$ is the Weyl chamber $W$, which is partitioned
by walls $t_{ij} \in \Z$ where $t_{ij}:=t_i-t_j$ into the cells of a cell
decomposition $\tau$ with dual decomposition $\tau'$. If we identify $\R^{d-2}$
affinely with the subspace of $\R^{d-1}$ with coordinates summing to 0, 
then $\tau$ becomes the {\em simplectic honeycomb}; see e.g.\/ Coxeter \cite{Coxeter}. 
For example, in degree 5 the cells of $\tau$ are regular tetrahedra and octahedra, 
and the cells of $\tau'$ are regular rhombic dodecahedra.

Let $\kappa$ be the cellulation defined only by the subset of walls $t_{i1}$
and let $\kappa'$ be the dual cellulation. Then we have:

\begin{lemma}[Equivalent Cells]\label{lemma:degree_d_equivalent_cells}
Let $K$ and $T$ be cells of the cellulations $\kappa'$ and $\tau'$ associated to
a vertex $v$. Then the components of $\rho^{-1}(K)$ and of $\rho^{-1}(T)$ are
homeomorphic, and are isotopic inside $X_d$.
\end{lemma}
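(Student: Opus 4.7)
The plan is to adapt the two-line proof of Lemma~\ref{lemma:equivalent_cells} to degree $d$. The argument there rests on two ingredients, both of which remain available in every degree: (i) the map $\rho:X_d \to W$ restricts to a locally trivial fiber bundle over the complement of the graphic (the affine arrangement of walls $\{t_{ij}=n\}$), with connections provided by the rotation flows $\FF_s$ of Lemma~\ref{lemma:rotation_torus}; and (ii) one may isotope the frontier of $K$ onto the frontier of $T$ within $W$ without ever creating a new tangency with the graphic.

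First I would examine the local structure of the cells $K\in\kappa'$ and $T\in\tau'$ at a common vertex $v$. Since the walls of $\kappa$ form a subset of the walls of $\tau$, the cell $T$ is cut out by all walls of $\tau$ through $v$ while $K$ is cut out by only those walls of the form $t_{i1}=n$ through $v$. Both are convex polytopes having $v$ as the root vertex of a monkey-prism in the sense of Definition~\ref{definition:monkey_turnover}, and both satisfy the positivity property that every ray from $v$ crosses each wall that bounds the cell only in the positive $t$-direction.

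Next I would parametrize the frontiers $\partial T$ and $\partial K$ by radial functions $r_T,r_K:S^{d-3}\to(0,\infty)$ centered at $v$, and consider the straight-line interpolation $r_s := (1-s)r_T + sr_K$. Each intermediate polytope is convex and star-shaped with $v$ as a root vertex with the positivity property. The extra walls $t_{ij}=n$ with $i,j\ge 2$ which contribute to $T$ but not to $K$ all pass through $v$ (since $v$ is a vertex of $\tau$, it lies in the simultaneous vanishing locus of every wall in its star), so along each radial ray from $v$ the pattern of wall crossings is constant in $s$ and no new tangency with the graphic is ever introduced. The product structure of $\rho$ away from the graphic then lifts this base isotopy to an ambient isotopy of $X_d$ carrying $\rho^{-1}(T)$ onto $\rho^{-1}(K)$ componentwise, yielding both the homeomorphism and isotopy claims.

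The main point that needs care, and that is not entirely visible in the degree 4 proof where $W$ is only two-dimensional, is the verification that the extra walls of $\tau$ incident to $v$ really do emanate radially from $v$ as a cone, rather than merely meeting at $v$ transversely in some more complicated arrangement. For this I would invoke the simplectic honeycomb structure of $\tau$ recorded in \S~\ref{subsection:degree_d}: each vertex $v$ of the honeycomb is precisely the intersection of the full family of integer walls $t_{ij}=n$ with prescribed values at $v$, so the star of $v$ in $\tau$ is a union of simplicial cones based at $v$. This is exactly the structure that keeps the radial interpolation star-shaped throughout and confines all wall crossings to the star of $v$, where the combinatorics of $\kappa$ and $\tau$ determine the same set of monkey-prism incidences.
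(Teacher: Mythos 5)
Your proposal takes essentially the same route as the paper: the two-ingredient argument (local triviality of $\rho$ away from the graphic plus an isotopy of frontiers introducing no new tangencies) is exactly what the degree-$4$ proof invokes and what the degree-$d$ case is declared ``perfectly analogous'' to. What you add is a specific choice of isotopy --- radial interpolation of the boundary spheres from the common lattice vertex $v$ --- together with a transversality verification: since every wall of $\tau$ meeting the interior of the star of $v$ passes through $v$, and a star-shaped boundary about $v$ is automatically transverse to any hyperplane through $v$, the interpolation never becomes tangent to the graphic. This fleshes out what the paper leaves implicit and is a reasonable elaboration, not a different argument.

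Two small imprecisions worth tidying. First, the dual cells $T\in\tau'$ and $K\in\kappa'$ are not ``cut out by walls'' --- their frontiers are Voronoi/barycentric, not walls of the arrangement; what is true is that their \emph{shape} is governed by the walls through $v$, and that both lie in the open star of $v$, so the only walls they meet are those through $v$ (plus, for $K$, possible codimension-$\ge 2$ corner contacts with walls just outside the $\tau$-star, which are transverse strata and cause no trouble). Second, the intermediate regions under arithmetic radial interpolation $r_s=(1-s)r_T+sr_K$ need not be convex; what the argument actually requires is only that they remain star-shaped about $v$ with $r_s>0$, which is automatic, and that the frontier remains transverse to the graphic, which your cone observation supplies. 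Neither point affects the validity of the conclusion.
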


\begin{theorem}[Complex of spaces]\label{theorem:degree_d_complex}
$\SS_d$ is a complex of monkey prisms and monkey turnovers over a contractible
$\tilde{A}_{d-2}$-building.
\end{theorem}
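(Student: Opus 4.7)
The plan is to mirror the degree-$4$ argument of Theorem~\ref{theorem:degree_4_complex_of_spaces}, with extra bookkeeping for the additional critical heights. First I would fix a generic interior point $(t_2,\dots,t_{d-1})$ of a top-dimensional cell of $\tau$ and identify the fiber $\rho^{-1}(t_2,\dots,t_{d-1})$ with a disjoint union of orbits of the rotation action $\FF_s$ of Lemma~\ref{lemma:rotation_torus}; genericity of the heights makes each orbit a $(d-1)$-torus, and varying the $t_j$ inside an open cell assembles these tori into a trivial torus bundle over the cell.

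Second, I would analyze pinching across a wall $t_{ij}=n$ exactly as in the degree-$4$ case: the coordinate $\theta_i$ parametrizes a circle that pinches along the depth-$n$ precritical leaves of $C_j$, producing monkey-Morse singularities (simple or monkey saddles according to the multiplicity of the leaves that collide). Assembled over a dual cell $\Delta$ of $\tau'$ at a vertex $v$, this exhibits $\rho^{-1}(\Delta)$ as a disjoint union of monkey bundles in the sense of Definition~\ref{definition:monkey_bundle}. The cells of $\tau'$ are fans based at $v$: every interior point of $\Delta$ is joined to $v$ by a unique ray along which each crossing of a graphic hyperplane occurs in the positive (pinching) direction. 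So each piece is a monkey prism, which becomes a monkey turnover exactly when coincident critical heights force a residual finite symmetry acting on boundary strata. Lemma~\ref{lemma:degree_d_equivalent_cells} lets one transport this analysis between $\tau'$ and $\kappa'$, with $\kappa'$ being the side on which the symmetry groups are most transparent (they come from permutations of critical points of equal height and the residual $\Z/(d-1)\Z$ ambiguity of the normal form).

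Third, I would glue cells together to build the $\tilde{A}_{d-2}$ building $B$ in the sense of Corson~\cite{Corson}, with one copy of each cell of $\tau'$ per connected component of its preimage under $\rho$. The apartments of $B$ are copies of $W$, and the building axioms reduce to the fact that any point of $X_d$ has a well-defined unordered-with-multiplicity vector of critical heights, so that it lies in a canonical apartment; any two chambers therefore share an apartment immersing isomorphically onto $W$. Contractibility of $B$ follows from the squeezing flow: uniformly decreasing all the $t_j$ retracts every apartment along a ray to the origin of $W$, and this flow is globally well-defined on $B$ because the pinching direction is transverse to every wall, so each retraction path in $W$ has a unique lift to $B$. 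The main obstacle I expect is the monkey-turnover verification at higher-codimension cells of $\tau'$, where one must identify the stabilizers on each boundary stratum with the groups permuting coincident critical points and check compatibility with $\rho$; Lemma~\ref{lemma:degree_d_equivalent_cells} and the degree-$4$ Star-of-David computation in Example~\ref{example:lamination_star_of_david} reduce this to bookkeeping rather than fresh ideas, with the remainder of the argument proceeding exactly as in the degree-$4$ case.
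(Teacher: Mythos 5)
Your proposal follows the same route as the paper, which declares this theorem ``perfectly analogous'' to the degree $4$ case (Theorem~\ref{theorem:degree_4_complex_of_spaces}) and whose entire proof there is the observation that the direction of pinching is transverse to the walls, so that from every point of the building there is a unique path to the origin projecting to a ray in $W$. Your reconstruction of the fiber-torus structure, the wall-crossing analysis producing monkey prisms and turnovers, and the unique-lift argument for contractibility all match the paper's intent.

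Two small corrections to your third paragraph, though. First, you attribute the contraction to ``the squeezing flow,'' but that flow rescales all critical heights by the same factor, i.e.\ it translates the vector $(t_1,\dots,t_{d-1})$ by a constant along the diagonal $\R$ direction --- which is exactly what was quotiented out to pass from $\SS_d$ to $X_d$ (equivalently, what makes the normalization $\sum t_j=0$ or $h_1=1$ possible). The squeezing flow is therefore trivial on $W$ and cannot retract it. What actually retracts $W$ to its origin is the radial contraction $(t_2,\dots,t_{d-1})\mapsto (st_2,\dots,st_{d-1})$, $s\in[0,1]$, which shrinks the \emph{ratios} of critical heights toward $1$; this is the ``direction of pinching'' the paper refers to, and your unique-lift argument (each ray crosses walls only in the unpinching direction, so components of fibers recombine monotonically) is correct once you swap in the right flow. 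Second, the parenthetical invoking ``the residual $\Z/(d-1)\Z$ ambiguity of the normal form'' is a red herring: $\SS_d$ is defined as the space of normalized polynomials, not conjugacy classes, so that ambiguity has not been quotiented out and does not act on the pieces. The finite groups giving rise to turnovers come only from permutations of critical leaves of coincident height, exactly as in Figure~\ref{quotient_torus}.
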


\begin{theorem}[Homotopy dimension]\label{theorem:degree_d_homotopy_dimension}
$\SS_d$ has the homotopy type of a $(d-1)$-complex (i.e.\/ a complex of half the
real dimension of $\SS_d$ as a manifold).
\end{theorem}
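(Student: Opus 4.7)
The plan is to derive this directly from the two main structural results already established, namely Theorem~\ref{theorem:degree_d_complex} and Lemma~\ref{lemma:prism_K_pi_1}. Since $\SS_d = X_d \times \R$ is homotopy equivalent to $X_d$, it suffices to exhibit a $(d-1)$-complex onto which $X_d$ deformation retracts. The starting point is to identify the order of the top-dimensional pieces in the complex of spaces. The Weyl chamber $W$ lies in $\R^{d-2}$ and the $\FF_s$-orbits on generic fibers of $\rho$ are $(d-1)$-tori, so $X_d$ has real dimension $(d-2)+(d-1)=2d-3$; since a monkey bundle of order $n$ has real dimension $2n-1$, the top pieces must be monkey prisms and turnovers of order $n=d-1$. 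Boundary strata of such pieces are monkey prisms/turnovers of the same or smaller order, and correspondingly have homotopy type of complexes of dimension at most $d-1$ by Lemma~\ref{lemma:prism_K_pi_1}.

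Next I would assemble these individual homotopy equivalences into a single deformation retract of $X_d$. The key observation is that a monkey pants $P$ deformation retracts onto a graph (e.g.\/ the $1$-skeleton obtained by collapsing regular neighborhoods of the cuffs and pushing to a trivalent spine), and this retract can be performed continuously in families, so in particular within a monkey bundle it can be applied fiberwise. Applying this inductively to the iterated bundle structure $E_2 \to S^1$, $E_j \to E_{j-1}$, one turns a monkey bundle of order $n$ into a space homotopy equivalent to an iterated graph bundle of real dimension $1 + (n-1) = n$; restricted to a convex polyhedron $\Delta$ in the base, this gives the $(d-1)$-spine of a monkey prism promised by Lemma~\ref{lemma:prism_K_pi_1}. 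For a monkey turnover, where a finite group $G$ acts on some boundary stratum, one can choose the pants-to-graph retractions $G$-equivariantly by averaging, so the retract descends to the quotient and still produces an $n$-complex.

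Finally, the complex of spaces structure from Theorem~\ref{theorem:degree_d_complex} lets us glue these spines together. Since the building $B$ is contractible, we can build $X_d$ cell by cell over $B$: at each step we attach a monkey prism or turnover to a union of lower-order pieces along a boundary stratum which is itself a monkey prism/turnover of smaller order. Because we may choose the fiberwise pants-to-graph retractions on a piece to restrict to the previously chosen retractions on its boundary strata (a matter of extending a fiberwise retract from a closed subset of the base polyhedron $\Delta$ to all of $\Delta$, which is possible since the bundle structure is defined over all of $\Delta$), the individual spines glue to a global $(d-1)$-complex $\Sigma \subset X_d$, and the individual retracts assemble into a deformation retract of $X_d$ onto $\Sigma$. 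The main obstacle is exactly this compatibility problem: verifying that the fiberwise spine retractions on a monkey prism can be chosen to extend those already chosen on its boundary strata, uniformly across the complex of spaces and compatibly with the finite-group actions on monkey turnovers. Once this is established, the theorem follows at once.
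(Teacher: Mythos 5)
Your approach is the same as the paper's: reduce to the decomposition of $X_d$ into monkey prisms and turnovers over the building (Theorem~\ref{theorem:degree_d_complex}), invoke Lemma~\ref{lemma:prism_K_pi_1} to bound the homotopy dimension of each piece by $d-1$, and conclude for the union. Your dimension count (a monkey bundle of order $n$ has real dimension $2n-1$, so the pieces here have order $d-1$) is correct. Where the paper is extremely terse --- the relevant passage in the proof of Theorem~\ref{theorem:S_4_K_pi_1} is literally \emph{``We have already seen that the monkey prisms \dots have the homotopy type of 3-complexes. The same is therefore true of $X_4$''} --- you are supplying the argument the paper elides, and your diagnosis of where the real work lies is accurate: one cannot in general conclude that a union of subspaces each of homotopy dimension $\le n$ has homotopy dimension $\le n$ (consider two $2$-disks glued along $S^1$), so some form of compatible spine/retraction must be constructed.

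A few points of caution about your resolution of that compatibility step. First, the fiberwise pants-to-graph retraction is not quite a free choice on each piece: the monkey prism $P=\pi^{-1}(\Delta)$ sits inside the full monkey bundle $E$, and its fiber over a point of $E_{j-1}$ is not the whole pants $\Omega_j$ but the sub-pants $\pi_j^{-1}(\Delta_j)\cap\Omega_j$, whose topology changes as you cross walls of the graphic; so the spine you are retracting onto must be chosen to interact coherently with the monkey Morse functions $\pi_j$, not just with the bundle structure. (This is precisely why the monkey prism hypothesis --- rays from a distinguished vertex cross the graphic only positively --- is imposed: it is what guarantees the sub-pants only ever pinch apart, never recombine, as one moves away from the vertex.) Second, ``averaging'' is not literally available for homotopies; what you want is an equivariant spine of the pants, which does exist for a finite group acting on a surface, but the construction is combinatorial rather than averaging. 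Third, the compatibility you need is not merely pairwise: a codimension-$2$ stratum is shared by several codimension-$1$ strata, so the choices must be made by an induction up the face poset of the building (low codimension strata first), checking at each stage that the previously-chosen spines on the boundary strata extend over the new piece. You gesture at this (``build $X_d$ cell by cell over $B$'') but it deserves emphasis, since it is exactly the coherence that the paper's one-line assertion sweeps under the rug. Finally, a small terminological note: all the boundary strata are monkey prisms/turnovers of the \emph{same} order $d-1$ (the order counts iterated bundle levels), just over lower-dimensional polyhedra; they are not of ``smaller order.''

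It is also worth knowing (the paper mentions this in the introduction) that there is an entirely independent proof of this theorem via DeMarco's result that $\SS_d$ is a Stein manifold, since a Stein manifold of complex dimension $n$ has the homotopy type of an $n$-complex. That route bypasses the entire building/monkey decomposition and gives the statement with much less work, though of course without the additional structural information the decomposition provides.
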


The proofs are all perfectly analogous to the proofs of
Lemma~\ref{lemma:equivalent_cells}, Theorem~\ref{theorem:degree_4_complex_of_spaces}
and (the relevant part of) Theorem~\ref{theorem:S_4_K_pi_1}. 

\subsection{Tautological Elaminations}\label{subsection:degree_d_tautological}

It is straightforward to generalize Definition~\ref{definition:tautological}
to higher degree for the critical leaves of least height. Fix 
$C_1,C_2,\cdots, C_{d-2}$ at heights $h_1\ge h_2 \cdots h_{d-2}$, and let $C_{d-1}$ at height
$h_{d-2}-\epsilon$ vary. Every time $C_{d-1}$ collides with a leaf $P$ which is
a preimage of $C_j$ for $j<d-1$ we add $P^d$ to the tautological elamination.

It is harder to decide on a definition for the other critical leaves. This is
because the elamination associated to $C_j$ depends on the fixed locations
of $C_k$ with $k<j$ and an {\em equivalence class} of fixed locations of $C_k$
with $k>j$. We explain.

\begin{definition}[Degree $d$ Tautological Elaminations]
Fix a degree $d$ and an index $1 < i \le d-1$. Fix locations of leaves
$C_j$ for $j \ne i$ where the $C_j$ with $j<i$ have heights $h_1 \ge h_2 \ge \cdots h_{i+1}$,
and the $C_j$ with $j>i$ have height $0$. We shall define the leaves of the 
tautological elamination $\Lambda_T(C)$ associated to 
$C:=C_1,\cdots \hat{C}_i \cdots C_{d-1}$ of depth $n$. 
Insert $C_i$ somewhere at height $h_{i+1}-\epsilon$ 
compatibly with the other leaves, and construct the leaves of the dynamical
elamination associated to the critical data $C \cup C_i$ 
which are preimages of $C_j$ up to depth $n$. 
As we vary $C_i$, the leaves $C_j$ with $j<i$ stay fixed
but the $C_j$ with $j>i$ are pushed over $C_i$ and over preimages of higher
depth critical leaves. Whenever $C_i$ collides with a preimage $P$ of a
higher $C_j$ we add $P^d$ to the tautological elamination.
\end{definition}

The $C_j$ with $j>i$ are `hidden parameters'; we need them to determine the
location of the preimages of greater height, but they do not themselves
contribute any leaves to $\Lambda_T$.

As the angles of $C_j$, $j <i$ vary by a vector of parameters $t$
(and $C_j$, $j>i$ are pushed over by this motion) the tautological elaminations 
vary by the flow $\FF_t$.

Within each monkey prism the pinching is described by these tautological
elaminations. Let's fix a cell $\tau'$ dual to a vertex $v$ where $t_j = n_j$ 
and a monkey prism which is a component of $\rho^{-1}(\tau')$.
The way in which the fiber $\Omega_i$ over $C_{< i} \in E_{i-1}$
pinches depends on which component we are in; implicitly,
this choice of component determines an equivalence class of the location
of $C_j$ with $j>i$ and therefore determines a tautological elamination. The
depth $\le n_i-n_j$ preimages of the $C_j$ in the tautological elamination
describe the pinching of $\Omega_i$ as a function of $C_{<i}$.
The proof is perfectly parallel to that of Theorem~\ref{theorem:S_3_topology}.

\subsection{Completed Tautological Elamination}

Fix $C:=C_1,\cdots,\hat{C}_i,\cdots,C_{d-1}$ as above. It is possible to 
define a suitable `completion' of the tautological elamination $\Lambda_T(C)$ as
follows.

\begin{definition}[Completed Tautological Elamination]
Fix $d$ and $C$ as above. In the construction of the tautological elamination,
set the formal height of $C_i$ to be equal to $0$, and 
define $\L_n$ to be the set of leaves of the form $P^d$ where
$P$ is a depth $n$ preimage of $C_i$ that collides with $C_i$ itself.

Although they have height $0$, the $\L_n$ have a well-defined vein in $\D$. 
Note that some pairs of leaves of $\L_n$ cross each other in $\D$. 
Nevertheless we can think of $\L_n$ as a closed subset of the space of 
geodesic leaves in $\D$ and take the lim sup
$\L_\infty:=\limsup_{n \to \infty} \L_n$ (i.e.\/ there is a leaf in $\L_\infty$ for
each convergent sequence of leaves in a subsequence of the $\L_n$). Then we define the
{\em completed} tautological elamination associated to $C$ to be 
$\bar{\Lambda}_T(C):=\Lambda_T(C) \cup \L_\infty$.

The leaves of $\bar{\Lambda}_T(C) - \Lambda_T(C)$ 
are called {\em flat} since they have height 0, to distinguish them from the 
{\em ordinary} leaves of $\Lambda_T(C)$.
\end{definition} 

\begin{theorem}[Limit is lamination]
The vein of $\bar{\Lambda}_T(C)$ is a geodesic lamination (i.e.\/ leaves of
$\L_\infty$ do not cross $\Lambda_T(C)$ or each other).
\end{theorem}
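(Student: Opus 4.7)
The plan is to prove (i) no leaf of $\L_\infty$ crosses any leaf of $\Lambda_T(C)$, and (ii) no two leaves of $\L_\infty$ cross each other, by a unified limiting argument: a genuine crossing in $\bar\Lambda_T(C)$ would force persistent crossings between leaves of $\L_{n_k}$ for arbitrarily large $n_k$, and I will rule these out.

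I would start by approximating each flat leaf in question as $\mu=\lim_k P_k^d$, with $P_k$ a depth-$n_k$ preimage of $C_i$ colliding (angularly) with $C_i$. The central structural fact I would invoke is that every ordinary leaf of $\Lambda_T(C)$, and every leaf $P_k^d\in\L_{n_k}$, arises as the image under $z\mapsto z^d$ of a leaf in some dynamical elamination $L(C,s)$. Inside a dynamical elamination, leaves never cross. Since $z\mapsto z^d$ on $\D$ is a branched covering whose only effect on non-crossing leaves is to amalgamate those sharing an angle modulo $1/d$, it cannot turn a non-crossing pair into a crossing pair: images of non-crossing leaves are either disjoint or coincident. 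In case (i) this directly forbids a persistent crossing between $P_k^d$ and an ordinary $\ell\in\Lambda_T(C)$, since both descend from non-crossing leaves in (a limit of) dynamical elaminations.

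In case (ii) the remaining subtlety is exactly the phenomenon the excerpt warns about: two preimages $P_k,Q_k$ of $C_i$ might be amalgamated by $z\mapsto z^d$, producing image leaves $P_k^d, Q_k^d$ that appear to cross inside a single $\L_n$ but actually represent the same $z\mapsto z^d$-image. I would argue that as $n_k,m_k\to\infty$ any would-be crossing in $\L_\infty$ pins $P_k$ and $Q_k$ into a common component of $S^1$ pinched along leaves of $\Lambda_T(C)$ of height at least that of the crossing locus, and these components have diameter tending to zero. Consequently $\mu=\lim P_k^d$ and $\mu'=\lim Q_k^d$ coincide as a single leaf of $\L_\infty$, contradicting the assumption that they formed two distinct crossing leaves.

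The main obstacle is making precise the principle that $z\mapsto z^d$ produces no new crossings among the very specific leaves arising in this construction. The key input is the inductive analysis already developed in the proof of Proposition~\ref{proposition:tautological_lamination} and its higher-degree analog in \S~\ref{subsection:degree_d_tautological}: preimages of $C_i$ of depth $n$ that collide with $C_i$ have tips that line up in a constrained cyclic pattern within the components of $S^1$ pinched by shallower leaves. Pushing this constraint through $z\mapsto z^d$ is what yields the required non-crossing (or coincidence) of images, and hence the geodesic lamination property for the vein of $\bar\Lambda_T(C)$.
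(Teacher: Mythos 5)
The paper does not actually prove this theorem; immediately after the statement it says ``The proof of this will appear in a forthcoming paper.''\ So there is no in-paper argument to compare your attempt against, and the fact that the author deferred the proof (and elsewhere characterizes closely related combinatorics as ``surprisingly delicate'') is already a warning sign that a short argument like yours is unlikely to close the gap.

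Beyond that, your proposal contains a concrete false step. You assert that $z\mapsto z^d$ ``cannot turn a non-crossing pair into a crossing pair: images of non-crossing leaves are either disjoint or coincident.'' This is not true for multiplication by $d$ on the circle. For instance, take $d=2$ and the pairs $\{0.1,0.4\}$ and $\{0.45,0.85\}$ in $\R/\Z$: the corresponding chords are disjoint, yet their images $\{0.2,0.8\}$ and $\{0.7,0.9\}$ are linked. Indeed the paper itself warns you of this phenomenon when it says ``Note that some pairs of leaves of $\L_n$ cross each other in $\D$'' --- the $\L_n$ are precisely images under $z\mapsto z^d$ of non-crossing configurations, and crossings \emph{do} appear at finite depth. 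So the leverage for case (i) in your proof is gone, and case (ii) cannot simply be bootstrapped from it. The substance of the theorem is that these finite-depth crossings vanish in the $\limsup$, and your argument that ``pushing this constraint through $z\mapsto z^d$ is what yields the required non-crossing'' is exactly the part that is asserted rather than proved; an actual proof must exploit the specific nesting/ordering structure of the colliding preimages of $C_i$ in the pinched circle components, which you invoke only at the hand-waving level.
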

The proof of this will appear in a forthcoming paper.

Pinching along $\bar{\Lambda}_T(C)$ is the same as pinching along
$\Lambda_T(C)$, since the flat leaves all have height zero, so do not actually
intrude into $\E$. However, it {\em does} make sense to pinch the closure 
$\bar{\E} \subset \C \cup \infty$ along $\bar{\Lambda}_T(C)$, 
exactly as before by cut and paste along the
tips of $\Lambda_T(C)$, and then by quotienting the endpoints of the flat leaves to
single points. Let's call the result $\bar{\Omega}_T(C)$.
Because we added limits in the definition of $\bar{\Lambda}_T(C)$,
$\bar{\Omega}_T(C)$ is Hausdorff.  It is a compactification of $\Omega_T(C)$
away from $\infty$, by locally connected spaces (isolated points or monotone 
quotients of circles).

Notice that this construction is non-vacuous even when $d=2$; it reproduces 
Thurston's quadratic geolamination \cite{Thurston_dynamics}, which is 
a proposed topological model for 
the boundary of the Mandelbrot set (proposed, since it is famously unknown 
if the Mandelbrot set is locally connected). 

Thus it seems reasonable to conjecture that the boundary components of 
$\overline{\Omega}_T(C)$ should parameterize (modulo the
question of local connectivity) the boundaries of the components of the complement of
$\SS_d$ in the slice associated to $C$. Compare with \cite{Blokh_et_al}.

\section{Sausages}\label{section:sausages}

In this section we introduce a completely new way to see the pieces in the
building decomposition of $X_d$ via algebraic geometry. It will turn out that the
monkey prisms and monkey turnovers in $X_d$ all become homeomorphic (after 
taking a product with an interval) to (rather explicit) complex affine 
varieties --- moduli spaces of certain objects called {\em sausage shifts}.

\subsection{Sausages: the basic idea}

Everyone likes sausages. Now we will see them made. The basic idea is illustrated
in Figure~\ref{making_sausages}.

\begin{figure}[htpb]
\centering
\includegraphics[scale=0.5]{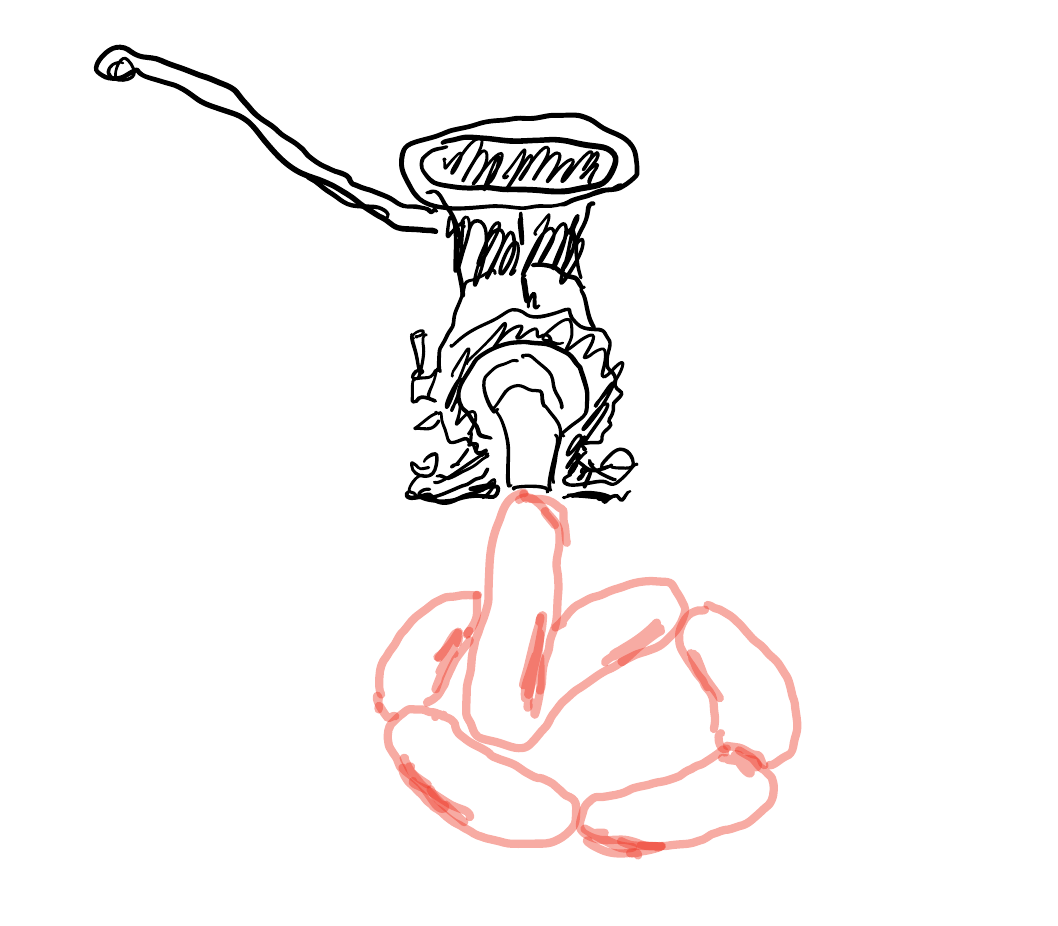}
\caption{making sausages}\label{making_sausages}
\end{figure}

A dynamical elamination is a machine that, by a process of repeatedly pinching
leaves in order of height, extrudes a long, complicated Riemann surface $\Omega$ 
(a Fatou set); by tying this Riemann surface off at periodic values of $-\log_d h$, 
we decompose it into manageable genus zero chunks: sausages.

Thus the Riemann surface $\Omega$ is tied off into a tree of sausages, and
the dynamics of $F$ on $\Omega$ decomposes into polynomial maps between the
sausages, whose moduli spaces are described by (elementary) algebraic geometry.

\subsection{Definitions}

\subsubsection{Tagged Points}

Let $f$ be a holomorphic map between open subsets of $\C$ taking $p$ to $q$. If
$f'(p)$ is nonzero, $df$ is a $\C$-linear isomorphism from $T_p$ to $T_q$. Thus
after scaling by a suitable positive real number, it induces an isometry of
unit tangent circles. We denote these unit tangent circles by $U$ and the induced
map as $Uf:U_p \to U_q$.

If $p$ is a critical point of multiplicity $m$, then $f$ maps infinitesimal
round circles centered at $p$ to infinitesimal round circles centered at $q$
by a degree $(m+1)$ covering. By abuse of notation we write $Uf:U_p \to U_q$ for this
map. In holomorphic coordinates for which $f$ is $z \to z^{m+1}$ this map is just
multiplication by $(m+1)$ on $U_0$ (really we are using an implicit identification
between the tangent space $T_q$ and its $(m+1)$st tensor power).

\begin{definition}[Tagged Point]
A {\em tagged point} is a point $p$ together with an element $u_p \in U_p$.
The {\em zero tag} is the point $0 \in \C$ together with the unit 
vector $u_0 \in U_0$ tangent to the positive real axis.

If $f$ is a holomorphic map taking a tagged point $p$ to a tagged point $q$ we say
it {\em preserves tags} if $Uf(u_p)=u_q$. If $f$ is a holomorphic function,
a {\em tagged root} is a tagged point $p$ with $f(p)=0$ for which $Uf(u_p)$ is
the zero tag.
\end{definition}

\subsubsection{Sausages}

Let $T$ be a locally finite rooted tree. Every vertex $v$ but the root has a unique
{\em parent} --- the unique vertex adjacent to $v$ on the unique embedded path in $T$
from $v$ to the root. If $w$ is the parent of $v$ we say $v$ is a child of $w$. Every
edge of $T$ is {\em oriented} from child to parent.

\begin{definition}[Bunch of sausages]
Let $T$ be a locally finite rooted tree. A {\em bunch of sausages} over $T$ is an
infinite nodal genus 0 Riemann surface $S$ made from a copy of $\CP^1$ for each vertex
$v$ of $T$ (the {\em sausages}, which we denote $\CP^1_v$) and for each $v$ a finite set of 
{\em marked tagged points} $Z_v \subset \CP^1_v - \infty$ and a bijection $\sigma$
from the children of $v$ to the set $Z_v$, so that if $w$ is a child of $v$, the
point $\infty$ in the sausage $\CP^1_w$ is attached to the point
$\sigma(w) \in Z_v \in \CP^1_v$.
\end{definition}

If $T$ is a rooted tree, for each vertex $w$ of $T$ there is a rooted subtree
$T_w \subset T$ with root $w$. If $S$ is a bunch of sausages over $T$, then 
$S_w \subset S$ denotes the bunch of sausages associated to the subtree $T_w$.

A {\em morphism} between rooted trees $T,T'$ is a simplicial map $\tau:T \to T'$ taking roots
to roots, and directed edges to directed edges. Thus if $w$ is a child of $v$,
the image $\tau(w)$ is a child of $\tau(v)$.

\begin{definition}[Augmentation]
If $T$ is a rooted tree, the {\em augmentation} of $T$, denoted $T'$, is 
the rooted tree obtained from $T$ by adding
a new root $v'$ and an edge from the root $v$ of $T$ to $v'$. If $S$ is a bunch of sausages
over $T$, the {\em augmentation} of $S$, denoted $S'$, is the bunch of sausages 
over $T'$ obtained by attaching $\CP^1_{v'}$ along $0 = Z_{v'}$ to $\infty$ in $Z_v$.
\end{definition}

\begin{definition}[Polynomial]\label{definition:polynomial}
Let $S$ be a bunch of sausages over a locally finite tree $T$. A {\em degree $d$ 
polynomial} $p$ is a degree $d$ tagged holomorphic map from $S$ to its augmentation $S'$ 
over a morphism $\tau:T \to T'$. This means that for every vertex $w$ of $T$
there is a polynomial map $p_w:\CP^1_w \to \CP^1_{\tau(w)}$ of degree $d_w$ in  
normal form taking $Z_w$ to $Z_{\tau(w)}$, and so that
\begin{enumerate}
\item{if $v$ is the root, the polynomial $p_v$ has degree $d$ and its roots are
exactly $Z_v \subset \CP^1_v$, and furthermore as tagged points 
$Z_v$ are tagged roots of $p_v$;}
\item{the root polynomial $p_v$ has more than one root; i.e.\/ $p_v$ is not
the polynomial $z^d$;}
\item{for every vertex $w$ with $\tau(w) = u$ the map $p_w:\CP^1_w \to \CP^1_u$ 
takes $Z_w$ to $Z_u$ as tagged points, and $Z_w$ is the entire preimage
$p_w^{-1}(Z_u)$; and}
\item{if $w$ is the child of $u$ with $\sigma(w) = z \in Z_u \subset \CP^1_u$ then
the degree $d_w$ of the polynomial $p_w$ is equal to the
multiplicity of $z$ as a preimage under $p_u$.}
\end{enumerate}
\end{definition}

The second bullet point is a kind of nondegeneracy condition: if the root
polynomial $p_v$ were $z^d$, then $S$ would already be the augmentation of some 
other sausage polynomial.

\begin{lemma}
Let $S$ be a bunch of sausages over $T$, and let $p:S\to S'$ be a degree $d$ polynomial
over a morphism $\tau:T \to T'$. Then for every vertex $w' \in S'$ the sum of degrees 
$\sum_{\tau(w)=w'} d_w = d$, and every point in $S'$ has exactly $d$ preimages, 
counted with multiplicity.
\end{lemma}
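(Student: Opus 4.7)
The plan is to prove the degree-sum identity $\sum_{\tau(w)=w'} d_w = d$ by induction on the graph distance in $T'$ from the root $v'$. The preimage count will then follow almost formally from the sum identity together with condition (3) of Definition~\ref{definition:polynomial}.

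For the base case $w' = v'$: since $\tau$ sends roots to roots, and any non-root of $T$ has a parent whose $\tau$-image would need to be a parent of $v'$, the only preimage of $v'$ under $\tau$ is the root $v$ of $T$ itself. Hence the sum collapses to $d_v = d$ by condition (1).

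For the inductive step, fix $u \in T'$ with $\sum_{\tau(w)=u} d_w = d$, and let $x$ be a child of $u$ in $T'$. Every $w' \in \tau^{-1}(x)$ is non-root, so has a unique parent $w \in T$, which must satisfy $\tau(w) = u$. Compatibility of $p$ with the nodal structure of $S$ and $S'$ forces $p_w(\sigma(w')) = \sigma(\tau(w'))$; together with condition (3) (which in particular gives $p_w^{-1}(Z_u) \subseteq Z_w$), this exhibits the children $w'$ of $w$ with $\tau(w')=x$ as bijective, via $\sigma$, with the points of $p_w^{-1}(\sigma(x))$. Condition (4) identifies each $d_{w'}$ with the local multiplicity of $p_w$ at $\sigma(w')$, so these $d_{w'}$ sum over such $w'$ to $\deg p_w = d_w$. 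Summing further over $w \in \tau^{-1}(u)$ and applying the inductive hypothesis yields $\sum_{w' \in \tau^{-1}(x)} d_{w'} = d$.

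For the preimage count, split points of $S'$ into smooth points and nodes. A smooth point $z \in \CP^1_u$ (not in $Z_u \cup \{\infty\}$) has preimages only in $\bigcup_{\tau(w)=u} \CP^1_w$, contributing $\deg p_w = d_w$ in each with multiplicity, for a total of $d$. A node of $S'$, represented simultaneously as $\sigma(x) \in \CP^1_u$ and as $\infty \in \CP^1_x$ for a child $x$ of $u$, has all of its preimages forced to be nodes (by condition (3) on one side, and by $p_w(\infty)=\infty$ on the other), and both representations enumerate the same set of nodes in $S$ --- namely the attachments for $w' \in \tau^{-1}(x)$, weighted by $d_{w'}$ --- which again sums to $d$. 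No single step here is genuinely hard; the only place to take care is the compatibility identity $p_w(\sigma(w')) = \sigma(\tau(w'))$, which is forced (though not explicitly spelled out in Definition~\ref{definition:polynomial}) by the requirement that $p$ be a well-defined holomorphic map between the nodal surfaces $S$ and $S'$.
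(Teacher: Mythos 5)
Your proof is correct and takes essentially the same approach as the paper: induct outward from the root of $T'$, using condition (1) for the base case and conditions (3)--(4) for the inductive step, then read off the preimage count. (The paper's one-line proof cites ``bullets (2) and (3),'' which appears to be an off-by-one slip; the conditions actually doing the work are (3) and (4), exactly as you use them.)
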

\begin{proof}
This is true for the root vertex by bullet (1) from Definition~\ref{definition:polynomial},
and by induction by bullets (2) and (3).
\end{proof}

This lemma justifies the terminology `polynomial map'.

\begin{definition}
Let $S$ be a bunch of sausages over $T$, and $p$ a polynomial map of degree $d$. 
Let $w$ be a vertex of $T$, and let $c \in \CP^1_w-\infty$ 
be a critical point for $p_w$. We say $c$ is a {\em genuine} critical point if
one of the following occurs:
\begin{enumerate}
\item{$c$ is not in $Z_w$; or}
\item{$c$ is in $Z_w$ but more than one sausage is attached at $c$;}
\end{enumerate}
and is {\em false} otherwise. In the second case, the multiplicity of $c$ is
equal to one less than the number of sausages attached at $c$.

We say $p$ is a {\em degree $d$ shift polynomial} and $(S,p)$ is a {\em degree $d$
sausage shift} if there are exactly $d-1$ genuine critical
points, counted with multiplicity.
\end{definition}

Bullet (2) in the Definition~\ref{definition:polynomial} is equivalent to saying 
that the root sausage contains at least one genuine critical point.

If $p$ is a shift polynomial, there is a minimal finite rooted subtree $U \subset T$
containing all the genuine critical points. Thus for $w \in T-U$, 
every polynomial $p_w$ is degree $1$; since it is in normal form it is the identity map
$p_w(z)=z$.

\begin{corollary}
Let $S$ be a bunch of sausages over $T$, and let $p$ be a degree $d$ shift polynomial. Then
the space $\EE(T)$ of ends of $T$ is a Cantor set, and the action of $p$ on $\EE(T)$ is
conjugate to the one-sided shift on right-infinite words in a $d$-letter alphabet.
\end{corollary}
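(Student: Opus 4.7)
The plan is to identify $\EE(T)$ with an inverse limit of finite sets, which is naturally homeomorphic to $\{1,\ldots,d\}^{\N}$ and on which $p$ acts as the one-sided shift.

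First I would verify that $\EE(T)$ is a Cantor set. Local finiteness of $T$ gives compactness, Hausdorffness, and total disconnectedness of the end space in the usual topology. To rule out isolated ends, I would argue inductively using the preceding lemma: the $\tau$-preimage of any vertex has total $d_w$-multiplicity equal to $d$, so the $\tau^{-n}$-preimage of $v$ consists of vertices at distance $n$ with $d_w$-multiplicities summing to $d^n$. Combined with the root nondegeneracy $|Z_v| \geq 2$ from bullet (2) of Definition~\ref{definition:polynomial}, this forces every sub-sausage tree $T_w$ to branch infinitely often, so no end is isolated.

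Next I would construct the coding map $\iota \colon \EE(T) \to \{1,\ldots,d\}^{\N}$. Fix once and for all a labeling of the $d$ preimages of $0 \in \CP^1_{v'}$ under $p$, counted with multiplicity; so each $z \in Z_v$ receives $m_z$ labels, where $m_z$ is the multiplicity of $z$ as a root of $p_v$. At every other vertex $w$, label the $d_w$ preimages of $\sigma(\tau(w))$ under $p_w$ analogously. Given an end $e = (v_0=v, v_1, v_2, \ldots)$, define $\iota(e) = (a_1, a_2, \ldots)$ by letting $a_i$ be the label of $\sigma(v_i)$ as one of the $d_{v_{i-1}}$ preimages of $\sigma(v_{i-1})$ under $p_{v_{i-1}}$. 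An equivalent and perhaps cleaner formulation is as an inverse limit: the iterated preimage $p^{-n}(0) \subset S$ has cardinality $d^n$ with multiplicity and comes equipped with a bijection to $\{1,\ldots,d\}^n$; the maps $p^{-n}(0) \to p^{-n+1}(0)$ induced by $p$ correspond to truncating the last letter, and the inverse limit is canonically both $\{1,\ldots,d\}^{\N}$ and $\EE(T)$ (via the fact that an end is determined by its nested sequence of preimage sausages). That $\iota$ is a continuous bijection, hence a homeomorphism of compacta, would follow from bullet (3) of Definition~\ref{definition:polynomial}, which guarantees that $Z_w$ exhausts the preimages at each stage.

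Finally I would check the shift property: tracing through $\tau(v) = v'$ and $\tau(w) = $ the child of $\tau(\text{parent of }w)$ at the appropriate point shows that $p$ acts on ends by $(v_0, v_1, v_2, \ldots) \mapsto (v_0, \tau(v_2), \tau(v_3), \ldots)$, which on itineraries is dropping the first letter. The main technical obstacle is making the labeling at ramified points fit together coherently across levels of iteration: at a vertex $w$ with $d_w > 1$, a single attachment point carries $d_w$ distinct labels, and these labels must be distinguished by the subtree structure below $w$. I would resolve this by working with the tagged-point data built into Definition~\ref{definition:polynomial}, which gives a canonical ordering of the preimages at every vertex and so propagates the root-level alphabet unambiguously down the tree.
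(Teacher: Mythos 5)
The outline is reasonable, but the execution has several genuine errors, most seriously in the coding itself.

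\textbf{The inverse-limit identification is wrong.} Under the natural bijection $T_n \leftrightarrow p^{-n}(0)$ given by $w \leftrightarrow \sigma(w) \in Z_{\text{parent}(w)}$, the map $p \colon p^{-n}(0) \to p^{-(n-1)}(0)$ corresponds to $\tau|_{T_n}$, \emph{not} to the parent map $T_n \to T_{n-1}$. Indeed $p(\sigma(w)) = p_{\text{parent}(w)}(\sigma(w)) = \sigma(\tau(w))$, and $\tau(w)$ is generally not the parent of $w$ (e.g.\ in Example~\ref{example:degree_3_sausages}, the two children of $u$ attached at $\pm\sqrt{-2c-c_1}$ have $\tau$-image $w$, not $u$). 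So $\varprojlim_p p^{-n}(0)$ is the inverse limit along $\tau$-maps, while $\EE(T)$ is the inverse limit along parent maps. These are different spaces, and the asserted ``canonical'' identification does not exist.

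\textbf{The explicit coding formula fails to typecheck and produces the wrong alphabet.} You write $a_i$ as the label of $\sigma(v_i)$ among the preimages of $\sigma(v_{i-1})$ under $p_{v_{i-1}}$. But $\sigma(v_{i-1}) \in Z_{v_{i-2}} \subset \CP^1_{v_{i-2}}$, whereas $p_{v_{i-1}}$ has target $\CP^1_{\tau(v_{i-1})}$, and $\tau(v_{i-1}) \neq v_{i-2}$ in general. The compatibility built into Definition~\ref{definition:polynomial} gives $p_{v_{i-1}}(\sigma(v_i)) = \sigma(\tau(v_i))$, so the natural repair is to take preimages of $\sigma(\tau(v_i))$; but even so, $a_i$ ranges over $\{1,\dots,d_{v_{i-1}}\}$, and for a shift polynomial $d_{v_{i-1}}=1$ for all but the finitely many $v_{i-1}$ in the critical subtree $U$. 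So almost every letter of $\iota(e)$ is forced, and $\iota$ cannot be a bijection onto $\{1,\dots,d\}^{\N}$. The correct $i$-th coordinate must record, roughly, which child of $v$ the iterate $p^{i-1}(e) = (v,\tau^{i-1}(v_i),\tau^{i-1}(v_{i+1}),\dots)$ passes through at level $1$ --- a quantity depending on $\tau^{i-1}(v_i)$, and, when that child is attached at a multiple root, on the further tail --- not merely on the pair $(v_{i-1},v_i)$.

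\textbf{The coherence issue is named but not addressed.} You correctly identify that a single attachment point $z \in Z_v$ with $m_z > 1$ must somehow carry $m_z$ distinct labels, distinguished by data further down the tree, and that this is the crux. Gesturing at the tagged-point structure does not resolve it: the tag at a critical point of multiplicity $m$ is \emph{one} choice among $m+1$ preimage tags, and changing it is exactly the isomorphism relation one quotients by --- it provides a local cyclic labeling of sheets, not an a priori partition of the ends over $z$ into $m_z$ pieces each of which $p$ maps homeomorphically onto $\EE(T)$. Producing such a partition (and showing it is generating, so that the itinerary map is injective) is the actual content of the corollary, and it uses the finiteness of the critical subtree $U$ in an essential way; as written this is missing.

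Finally, the Cantor-set step is at best under-argued. The degree count $\sum_{\tau(w)=w'} d_w = d$ by itself does not prevent a vertex from having a single child; what does is the chain $|Z_w| \geq |Z_{\tau(w)}| \geq \cdots \geq |Z_v| \geq 2$, which follows from surjectivity of $p_w$ together with $Z_w = p_w^{-1}(Z_{\tau(w)})$ from bullet (3) of Definition~\ref{definition:polynomial}. State that explicitly rather than ``combined with the root nondegeneracy\ldots this forces\ldots''.
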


\subsubsection{Isomorphism of polynomials}

The definition of a sausage polynomial includes data in the form of tags that
is essential if we want to construct a map from sausage polynomials to shift 
polynomials, as we shall do in \S~\ref{subsection:sausage_map}. 
In order for this map to be injective we must quotient out by a 
(finite) equivalence relation that we now explain.

Let $S$ be a bunch of sausages over a tree $T$, and let $p$ be a degree $d$
polynomial as in Definition~\ref{definition:polynomial}. Let $u$ be a vertex
of $T$, let $z \in Z_u \subset \CP^1_u$, and let $w$ be the child of $u$ with
$\sigma(w)=z$. If $z$ is a critical point of $p_u$ of multiplicity $m$ then
$p_w$ has degree $m+1$; i.e.\/ the degree of $p_u$ near $z$ agrees with the
degree of $p_w$ near infinity. In the sequel we will `cut open' $\CP^1_u$ at
$z$ and $\CP^1_w$ at infinity, and sew together the two resulting boundary 
circles in a dynamically compatible way, lining up the tag at $z$ in $\CP^1_u$
with the positive real axis at infinity in $\CP^1_w$. 

The tag at $z$ maps under $p_u$ to the tag at $p_u(z)$; thus given $p_u$
and the choice of tag at $p_u(z)$ we have freedom in the choice of a compatible
tag at $z$: different choices differ by multiplication by an 
$(m+1)$st root of unity $\zeta$. If we multiply the tag at $z$ by $\zeta$, 
we must at the same time change the coordinates on $\CP^1_w$ by multiplication
by $\zeta$. Changing coordinates on $\CP^1_w$ inductively affects the data
associated to $w$ and the subtree $T_w$ and its preimages under $p$ in the
obvious way. For example, $p_w(z)$ is replaced by $p_w(\zeta^{-1}z)$, the
marked points $Z_w$ are replaced by their preimages $\zeta Z_w$, etc. 

We say two sausage shifts are {\em isomorphic} if they are related by a
finite sequence of modifications of this sort. There are 
$\prod_{w\in T}\prod_{z\in Z_w} (m(z)+1)$
polynomials in an isomorphism class, where $m(z)$ is the multiplicity of $z$
as a critical point of $p_w$, and where the product is taken over all $z \in T_w$ 
for all $w \in T$. Note that for a sausage shift, this product is finite, since all
but finitely many $p_w$ have degree $1$.

\subsection{Moduli spaces}\label{subsection:moduli_spaces}

For each fixed combinatorial type of degree $d$ sausage shift, there is
an associated {\em moduli space} of isomorphism classes with the given
combinatorics, parameterized locally by the coefficients of the 
vertex polynomials $p_w$ of degrees $>1$. 
We shall see in Theorem~\ref{theorem:moduli_spaces} that 
moduli spaces for sausage shifts with generic heights have complex 
dimension $d-1$, and in fact they have the natural
structure of iterated bundles of complex affine varieties in an obvious way.

This is best explained by examples.

\begin{example}[Degree 2]
The root polynomial $p_v$ is of the form $z^2-c$ for some nonzero $c$. 
Since every other polynomial has degree $1$ (and
is therefore the identity function $z$) $S$ is a rooted dyadic tree, where each
parent has two children attached at $\pm \sqrt{c}$. The moduli space
of such sausages is evidently $\C^*$. This is homeomorphic (but {\em not} 
holomorphically isomorphic) to $\SS_2$.
\end{example}

\begin{example}[Distinct roots]
The simplest case in every degree $d$ is that the root polynomial $p_v$ has distinct
roots. Then every other polynomial has degree $1$ and $S$ is a rooted $d$-adic tree,
where each parent has $d$ children attached at the roots of $p_v$. Thus the
moduli space is a discriminant complement, and hence a $K(B_d,1)$.
\end{example}

\begin{example}[Degree 3]\label{example:degree_3_sausages}
Suppose the root polynomial $p_v$ has two roots, so it is of the form
$p_v:=(z-c)^2(z+2c)=z^3-3c^2z+2c^3$ with $c$ nonzero. The root vertex $v$ has two
children $u,w$ where $u$ is attached at the double root $c$ (say). Then
$p_w = z$ and $p_u$ has degree 2. Either $0$ is a genuine critical point for
$p_u$, or $p_u$ is of the form $z^2+c$ or $z^2-2c$. In the latter case
$u$ has two children $u',w'$ where $u'$ is attached at $0$ and this chain of 
critical roots $u,u',u^{(2)},u^{(3)},\cdots$ continues until $p_{u^{(n)}}:=z^2+x$ 
has a genuine critical point (or equivalently, $x \in \C - Z_t$ where $p$
takes the vertex $u^{(n)}$ to $t$). If we ignore tags, the moduli space 
is a bundle over $\C^*$ (parameterized by the choice of $c$) and whose fiber
is $\C - Z_t$. 

Notice that the points of $Z_t$ are obtained from $c, -2c$ by repeatedly
pulling back under double branch covers of the form $z \to z^2+c_j$ where
$c_j$ is one of the preimages pulled back so far. The monodromy acts on each
of these double branch covers either trivially or by permuting some of the
preimages in pairs. It follows that every orbit of the monodromy on $Z_t$
has length a power of $2$.
\end{example}

\begin{example}[Star of David]\label{example:moduli_star_of_david}
Suppose that the root polynomial in degree $4$ has one simple root and one triple
root; i.e.\/ the root polynomial is $p_v:=(z-c)^3(z+3c)$ with $c$ nonzero.
The root has two children $u,w$ where $u$ is attached at the triple root $c$
(say). The simplest case is when $c$ and $-3c$ are regular values for $p_u$. Then
the moduli space is a bundle over $\C^*$ whose fiber is a copy of $Y_2$, the space
of degree 3 polynomials $z^3+pz+q$ for which two specific distinct complex numbers 
(in this case $c$ and $-3c$) are regular values. It turns out that this moduli
space is homotopic to the monkey turnover described in Example~\ref{example:lamination_star_of_david}.
\end{example} 

The general structure of moduli spaces should now be starting to become clear.
To make a precise statement, we introduce the notion of a {\em Hurwitz Variety}:

\begin{definition}[Hurwitz Variety]
A {\em degree $d$ Hurwitz variety} is an affine complex variety of the following form.
Fix a finite set $Q \subset \C$ and a conjugacy class of representation
$\sigma$ from $\pi_1(\CP^1-Q)$ to the symmetric group $S_d$. 

The {\em Hurwitz Variety}
$H(Q,\sigma,d)$ is the space of degree $d$ normalized polynomials of the form 
$f(z):=z^d + a_2z^{d-2} + \cdots + a_d$ for which $f:\C \to \C$ is a degree
$d$ branched cover whose monodromy around $q$ is conjugate to $\sigma(q)$
for all $q \in Q$.
\end{definition}

For a permutation $\sigma$ let $|\sigma|=d-\text{number of orbits}$. 
Thus $|\sigma(q)|$ is the multiplicity of $q$
as a critical value of $f$, for each $q \in Q$ and each $f \in H(Q,\sigma,d)$.
We establish some basic properties of these varieties:

\begin{proposition}[Basic Properties]\label{prop:Hurwitz_basic}
Hurwitz varieties $H(Q,\sigma,d)$ satisfy the following basic properties:
\begin{enumerate}
\item{the dimension of $H(Q,\sigma,d)$ is equal to $d-1-\sum_q|\sigma(q)|$;}
\item{$H(Q,\sigma,d)$ is connected if its dimension is positive;}
\item{if there is a homeomorphism from $\CP^1$ to $\CP^1$ taking $Q$ to $Q'$ and
conjugating $\sigma$ to $\sigma'$ then
$H(Q,\sigma,d)$ is homeomorphic to $H(Q',\sigma',d)$.}
\end{enumerate}
\end{proposition}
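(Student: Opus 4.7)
The plan is to address parts (1)--(3) in turn using classical Hurwitz-theoretic machinery. The ambient space of normalized degree $d$ polynomials is $\C^{d-1}$. For part (1), I would observe that for each $q \in Q$ the condition $f(z)-q = \prod_j (z-r_j)^{n_j}$, where $(n_1,\ldots,n_k)$ is the cycle type of $\sigma(q)$, cuts out a locally closed subvariety of codimension $d-k = |\sigma(q)|$ in the space of monic degree $d$ polynomials: the roots contribute $k$ free parameters, and the normalization constraint $\sum n_j r_j = 0$ is absorbed into passing from $\C^d$ to $\C^{d-1}$. The conditions over distinct points of $Q$ involve values of $f$ at different places and so are pairwise transverse; codimensions add, giving the upper bound $\dim H \leq d-1-\sum_q|\sigma(q)|$. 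Equality and nonemptiness follow from the Hurwitz existence theorem: adjoining enough simple free branch points to satisfy the Riemann--Hurwitz count yields a realizable monodromy factorization in $S_d$.

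For part (2), set $n := d-1-\sum_q|\sigma(q)| > 0$. Forgetting all but the free simple critical values gives a surjective map $H(Q,\sigma,d) \to \mathrm{Conf}_n(\C - Q)$ whose fiber is a finite set of Nielsen equivalence classes of factorizations $(1\,2\cdots d) = \prod_{q\in Q}\sigma(q) \cdot \prod_{i=1}^n \tau_i$ in $S_d$, with each $\tau_i$ a transposition. The base is connected, and the classical Clebsch--Hurwitz theorem asserts that braid monodromy acts transitively on such Nielsen classes provided the monodromy group is $S_d$; since a $d$-cycle together with any transposition generates $S_d$, transitivity holds, and thus $H(Q,\sigma,d)$ is connected.

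For part (3), given $h\colon\CP^1\to\CP^1$ with $h(Q)=Q'$ and $h_*\sigma = \sigma'$, first assume $h$ is orientation-preserving. The space of orientation-preserving self-homeomorphisms of $\CP^1$ is path-connected, so there is an isotopy $h_t$ with $h_0 = \mathrm{id}$ and $h_1 = h$; the path $Q_t := h_t(Q)$ connects $Q$ to $Q'$ in configuration space and transports $\sigma$ continuously to $\sigma'$. Along this path the varieties $H(Q_t,\sigma_t,d)$ assemble into a locally trivial topological fiber bundle (local triviality comes from analytic continuation of polynomial families, with an affine normalization at each time keeping them in $\C^{d-1}$), and parallel transport from $t=0$ to $t=1$ gives the desired homeomorphism. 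The orientation-reversing case is handled by pre-composing with complex conjugation, which identifies $H(Q,\sigma,d)$ with $H(\bar Q,\bar\sigma,d)$ via $(a_2,\ldots,a_d)\mapsto(\bar a_2,\ldots,\bar a_d)$.

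The principal obstacle is the transversality and existence assertion in part (1): the codimension count is geometrically natural but must be verified to be attained rather than exceeded, and nonemptiness requires a Hurwitz-type existence argument bridging combinatorial monodromy data and actual polynomial realizations. Parts (2) and (3) then reduce to transitivity and local-triviality results that are standard once the stratification and normal-form picture from (1) is in place.
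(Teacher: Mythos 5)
Your proposal uses a genuinely different route from the paper's in parts (2) and (3), and part (2) contains a concrete error.

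For part (2), your reduction to braid monodromy on Nielsen classes of factorizations of a $d$-cycle is in the right spirit, but the supporting claim ``since a $d$-cycle together with any transposition generates $S_d$, transitivity holds'' is false: in $S_4$ the $4$-cycle $(1\,2\,3\,4)$ and the transposition $(1\,3)$ generate only the dihedral group of order $8$. More generally $(1\,2\cdots d)$ and $(i\,j)$ generate $S_d$ iff $\gcd(j-i,d)=1$. The classical Clebsch theorem also does not directly cover the mixed factorizations you have (prescribed cycle types at $Q$ together with free transpositions); the relevant transitivity results (Kluitmann, Natanzon, Ben--Itzhak--Teicher, etc.) have more delicate hypotheses than ``the monodromy group is $S_d$.'' The paper sidesteps this by adjoining additional simple branch points $P$, viewing the Hurwitz variety as a finite cover of a bundle over configurations of $Q\cup P$, and declaring the connectivity of the fiber an exercise in finite group theory; that fiber-connectivity is precisely the delicate point you would need to engage, and it cannot be dismissed by the statement you wrote.

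For part (3), the paper does not use isotopy extension and parallel transport in a family of Hurwitz varieties. Instead it normalizes the homeomorphism $\varphi$ to be the identity near $\infty$ and $K$-quasiconformal, pulls back the Beltrami differential $\mu=\bar\partial\varphi/\partial\varphi$ along each $f$, solves the Beltrami equation for $f^*\mu$ (normalized at infinity) to get $\phi$, and defines the homeomorphism directly by $f\mapsto \varphi f\phi^{-1}$. This avoids having to prove local triviality of the family $H(Q_t,\sigma_t,d)$ over the isotopy path, which is precisely the step you assert ``comes from analytic continuation'' but do not justify; Hurwitz varieties are noncompact and stratified, so Ehresmann-type local triviality is not automatic. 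The quasiconformal construction buys an explicit homeomorphism with no bundle machinery, at the cost of invoking the measurable Riemann mapping theorem. Your approach could likely be completed, but as written it leaves a nontrivial gap; the paper's is self-contained once one accepts quasiconformal surgery.

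Part (1) is consistent with the paper, which offers no argument beyond ``elementary''; your codimension count with transversality across distinct $q\in Q$ is a reasonable fleshing-out.
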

\begin{proof}
The first bullet (i.e.\/ dimension count) is elementary. 

If we choose a finite subset $P \subset \C - Q$ and extend $\sigma$ to $P$
then we can build a degree $d$ branched cover of $\CP^1$ over $P \cup Q$ with 
monodromy $\sigma$ at $P \cup Q$. The genus of this 
branched cover depends only on $\sigma$. Thus the family of
covers which are connected and genus $0$ form a bundle over the space of 
pairs $Q\cup P,\sigma$ of a particular combinatorial type,
and it is an exercise in finite group theory to show that these 
fibers are connected when they have positive dimension. Each
$H(Q,\sigma,d)$ is a finite branched cover of the associated fiber (the
Riemann surface determines the polynomial up to finite ambiguity); this
proves the second bullet.

To prove the third bullet, let's modify our homeomorphism $\varphi:\CP^1 \to \CP^1$
by an isotopy so that it is equal to the identity in a neighborhood of $\infty$,
and is $K$-quasiconformal for some $K$. For each $f \in H(Q,\sigma,d)$ we
can pull back the Beltrami differential $\mu:=\bar{\partial}\varphi/\partial \varphi$
to $f^*\mu$ and let $\phi:\CP^1 \to \CP^1$ uniquely solve the Beltrami equation for
$f^*\mu$, normalized to be tangent to the identity at infinity to second order.
Then $\psi(f):=\varphi f \phi^{-1}$ is a normalized polynomial, and by
construction it is in $H(Q',\sigma',d)$. Letting $f$ range over $H(Q,\sigma,d)$
defines a homeomorphism $\psi:H(Q,\sigma,d) \to H(Q',\sigma',d)$ as desired.
\end{proof}

\begin{example}[Discriminant Variety]
If we set $Q =\lbrace 0 \rbrace$ and $\sigma$ the map to the identity element, 
then $H(\lbrace 0\rbrace,\id,d)$ is the space of degree $d$ polynomials in 
normal form with simple roots. In other words, 
$H(\lbrace 0\rbrace,0,d)$ is the complement of the discriminant variety, 
and is a $K(B_d,1)$.
\end{example}

\begin{theorem}[Moduli spaces]\label{theorem:moduli_spaces}
Every moduli space of a degree $d$ sausage shift of a fixed combinatorial type
is an algebraic variety over $\C$ which has the structure of an iterated
bundle whose base and fibers are all Hurwitz varieties. Furthermore, it has 
dimension $d-1$.
\end{theorem}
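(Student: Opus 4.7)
The plan is to build the moduli space as an iterated fiber bundle by descending the tree $T$ from its root. The combinatorial type fixes $T$, the degrees $d_w$, and the combinatorial attachment data; only the finite subtree $U$ containing all genuine critical points carries nontrivial polynomial parameters, since for every $w$ outside $U$ the polynomial $p_w$ is forced to be the identity.

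First I would identify the stratum for the root. By bullet (1) of Definition~\ref{definition:polynomial}, $p_v$ is a normalized degree $d$ polynomial whose roots form $Z_v$ with prescribed multiplicities $(d_{u_1},\ldots,d_{u_{k_v}})$ dictated by the children of $v$; this is precisely the Hurwitz variety $H(\{0\},\sigma_0,d)$ where $\sigma_0$ has cycle type $(d_{u_1},\ldots,d_{u_{k_v}})$, of dimension $d-1-(d-k_v)=k_v-1$. Inductively, suppose a polynomial has been chosen for every ancestor of a non-root vertex $w$; then $Z_{\tau(w)}\subset\C$ is an explicit finite set. Bullets (3) and (4) of Definition~\ref{definition:polynomial} demand that $p_w$ be normalized of degree $d_w$, with $p_w^{-1}(Z_{\tau(w)})=Z_w$ and with ramification over each $z\in Z_{\tau(w)}$ matching the multiplicities of the children of $w$ attached at its preimages. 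These conditions place $p_w$ in the Hurwitz variety $H(Z_{\tau(w)},\sigma_w,d_w)$ for a combinatorially determined $\sigma_w$. As $p_{\tau(w)}$ varies over its stratum, $Z_{\tau(w)}$ varies continuously and Proposition~\ref{prop:Hurwitz_basic}(3) ensures the fibers all share a common homeomorphism type; iterating through the finitely many levels of $U$ exhibits the moduli space as an iterated bundle whose base and fibers are Hurwitz varieties.

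For the dimension count, the Hurwitz variety for each $p_w$ has complex dimension $d_w-1-\sum_{z\in Z_{\tau(w)}}|\sigma_w(z)|$; the subtracted term is the total ramification of $p_w$ over $Z_{\tau(w)}$ (or over $\{0\}$ for the root), so what remains is precisely the number of genuine critical points of $p_w$ counted with multiplicity. Summing over all $w$, the total complex dimension equals the total genuine critical count of the sausage shift, which is $d-1$ by definition.

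The most delicate point is upgrading this picture from a topological to a holomorphic/algebraic bundle: Proposition~\ref{prop:Hurwitz_basic}(3) yields only a quasiconformal identification of Hurwitz varieties with distinct $Q$, which is a priori merely a homeomorphism. To obtain the algebraic bundle structure one should instead realize the total moduli space as the subvariety of a product of polynomial coefficient spaces cut out by the algebraic equations that enforce the prescribed ramification combinatorics at each vertex, and verify smoothness over the base fiberwise --- essentially that the critical values of $p_w$ depend submersively on its coefficients once the combinatorial type is held fixed.
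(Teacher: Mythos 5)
Your proposal takes essentially the same route as the paper's proof: induct down the tree, fixing the polynomials at all ancestors, and identify the remaining freedom in $p_w$ with a Hurwitz variety $H(Z_{\tau(w)},\sigma_w,d_w)$ whose data is determined by the polynomials above $w$ and by the combinatorial type. You add two things the paper's terse proof does not spell out --- the dimension count (correct: the dimension of the Hurwitz variety at $w$ is the number of critical points of $p_w$ not lying in $Z_w$, which are exactly the genuine critical points contributed by $w$, and these sum to $d-1$ by definition) and the observation that Proposition~\ref{prop:Hurwitz_basic}(3) only provides a quasiconformal, hence topological, trivialization, with the algebraic bundle structure instead coming from writing the moduli space as a subvariety of a product of coefficient spaces. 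The one point the paper addresses that you gloss over is the passage from tuples of polynomials to isomorphism classes: a tag at an attachment point of multiplicity $m$ is only determined up to an $(m+1)$st root of unity, and the paper notes that a tag change acts by a finite fiber-preserving automorphism of the corresponding Hurwitz variety, so the quotient still carries the iterated bundle structure. You should add a sentence to this effect, since without it your ``iterated bundle'' lives on the space of normalized tuples $(p_w)$, not on the moduli space of isomorphism classes.
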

\begin{proof}
Consider a vertex $w$ with parent $u$ and image $v=\tau(w)$. There is a polynomial 
$p_w:\CP^1_w \to \CP^1_v$ whose degree is equal to the multiplicity of $u$
as a preimage under $p_u$. The points $Z_w$ are the preimages of $Z_v$ under
$p_w$, and the number and multiplicity of these points depends on the monodromy
of $p_u$ as a branched cover around $Z_v$. Thus for a fixed combinatorial type,
the polynomials $p_w$ vary in a Hurwitz Variety whose data is determined by the
polynomials in vertices above $w$. Changing a tag changes the coordinates on the
Hurwitz variety by a (finite) automorphism. Thus the moduli
space is an iterated bundle as claimed.
\end{proof}

\subsubsection{$K(\pi,1)$s}

Hurwitz varieties can apparently be quite complicated, topologically. But
at least in low degree we have the following theorem, which is by no means 
obvious, and which I personally find rather startling:

\begin{theorem}[$\CAT(0)$ 2-complex]\label{theorem:Hurwitz_3_CAT_0}
Every connected Hurwitz variety $H(Q,\sigma,3)$ is a $K(\pi,1)$ with the
homotopy type of a locally $\CAT(0)$ 2-complex.
\end{theorem}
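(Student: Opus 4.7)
The plan is to imitate the construction of the locally $\CAT(0)$ $2$-complex spine for $\SS_3 \simeq X_3 \times \R$ given in Theorem~\ref{theorem:S_3_CAT0_2complex}, now applied to a general degree~3 Hurwitz variety.

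First I would reduce to the main case via Proposition~\ref{prop:Hurwitz_basic}(1): positive dimension forces $\sum_{q\in Q}|\sigma(q)| \le 1$. When this sum equals~$1$ the variety $H(Q,\sigma,3)$ has complex dimension $1$; as a connected open Riemann surface it has free fundamental group, is homotopy equivalent to a finite graph, and is therefore trivially a $\CAT(0)$ $1$-complex. The substantive case is $\sum|\sigma(q)|=0$, i.e., $H=Y_n$ where every $q\in Q$ is a regular value; this is the content of the Regular Value Theorem.

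By Proposition~\ref{prop:Hurwitz_basic}(3) I may place $Q$ conveniently (real and collinear, say). I parameterize a polynomial $f(z)=z^3+pz+q \in Y_n$ by its unordered pair of critical values $\{v_1,v_2\}$; since $\{v_1,v_2\}$ determines $(p,q)$ only up to the cube root of $-27(v_1-v_2)^2/16$, the map $Y_n \to \mathrm{Sym}^2(\C-Q)$ is a $3$-fold branched cover, branched along the diagonal $\{v_1=v_2\}$. The plan is to build a $\CAT(0)$ spine on the base and lift it through this finite cover, using that a finite (branched) cover of a locally $\CAT(0)$ $K(\pi,1)$ $2$-complex is again one after orbifolding at the branch locus.

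On $\mathrm{Sym}^2(\C-Q)$ I would introduce a wall decomposition modelled on the $t_{ij}$-walls of \S~\ref{subsection:degree_d}: let the walls be the loci where $|v_i - q_j|$ or $|v_1 - v_2|$ crosses prescribed integer-log thresholds, or where the relative height ordering of these quantities changes. Each top-dimensional cell pulls back to a disjoint union of monkey prisms or monkey turnovers (in the sense of Definition~\ref{definition:monkey_turnover}) inside $Y_n$, which by Lemma~\ref{lemma:prism_K_pi_1} are $K(\pi,1)$s of complex dimension at most~$2$. Deformation-retracting each piece onto its canonical flat spine and gluing along the walls produces a $2$-complex $Z \subset Y_n$ onto which $Y_n$ deformation retracts.

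The main obstacle will be verifying local $\CAT(0)$-ness of $Z$: the link of each vertex must be a metric graph of systole $\ge 2\pi$. For $X_3$ this was essentially automatic because vertex links were unions of flat circles arising from the tautological elamination. For general $Y_n$ the link combinatorics at a vertex is dictated by how the walls cyclically surround it, and I expect the requisite `no short cycle' property to follow from a direct finite combinatorial check whose input is the constrained cyclic order in which walls can meet given the planar configuration of $v_1,v_2$ relative to the punctures~$Q$. The branch points over the diagonal require a separate but purely local analysis, since they correspond to polynomials of the form $z^3+q$ and contribute orbifold-type vertices to $Z$.
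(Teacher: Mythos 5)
Your reduction to the regular-value case is correct and matches the paper, and the observation that $Y_n$ is a degree-$3$ branched cover of $\mathrm{Sym}^2(\C-Q)$ branched over the diagonal is true. But the rest of the strategy has a genuine circularity. You invoke Lemma~\ref{lemma:prism_K_pi_1} to conclude that the pieces of your proposed decomposition are $K(\pi,1)$s; that lemma only proves this for monkey \emph{prisms}, and for monkey \emph{turnovers} it asserts nothing beyond the homotopy dimension. The paper explicitly warns that ``there does not seem to be any obvious reason why monkey turnovers in generality should be $K(\pi,1)$s,'' and establishes the $K(\pi,1)$ property for the turnovers in $X_d$ precisely by identifying them (via Theorems~\ref{theorem:moduli_spaces} and~\ref{theorem:monkey_moduli}) with iterated Hurwitz bundles and then quoting Theorem~\ref{theorem:Hurwitz_3_CAT_0} --- the very theorem you are trying to prove. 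So the monkey prism/turnover route cannot be the input here; it is the output.

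There is a second, more structural problem: the wall decomposition of the shift locus (the $\tilde A_{d-2}$ building) is supplied by dynamics --- heights of critical leaves rescale by a factor of $d$ under iteration, and the walls sit at integer log-height differences. A Hurwitz variety $H(Q,\sigma,3)$ carries a single (non-iterated) branched cover $f:\C\to\C$, so there is no intrinsic height filtration; the thresholds on $|v_i-q_j|$ and $|v_1-v_2|$ you propose are made up, and there is no reason the resulting pieces would be monkey bundles in the sense of Definition~\ref{definition:monkey_bundle}. Finally, you defer the $\CAT(0)$ verification to an expected ``direct finite combinatorial check,'' but this is where all the content lives. The paper's argument instead works with the explicit curve complement $Y_n=\C^2-V$ in coordinates $(x,y)$, slices by $|x|$, and builds $Y_n$ up from $n$ trefoil complements joined by Hopf-link crossings. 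This produces a presentation of $\pi_1(Y_n)$ by Garside generators and commutation relations, which is then realized geometrically via the Brady--McCammond right-angled orthoscheme trick (replacing $\langle a,b,c\mid ab=bc=ca\rangle$ by $\langle a,b,c,d\mid ab=bc=ca=d\rangle$ built from right isoceles Euclidean triangles). The link systole bound is not automatic: cycles of simplicial length $3$ could a priori be short, and the proof depends on a holonomy observation (a path crossing three copies of the theta-graph link is displaced to a \emph{different} short point by the one-third rotation, forcing length $\ge 2\pi$). None of this is captured by your plan.
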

\begin{proof}
If any point in $Q$ is a critical value the dimension is $1$ or $0$ 
and $H$ is either homotopic to a graph or to a finite set of points. So the
only interesting case is when $Q$ is a finite set and $\sigma$ is the 
constant map to the identity permutation. In other words, if $|Q|=n$, then
$H(Q,\id,3)$ is the (two complex dimensional) 
space $Y_n$ of degree $3$ polynomials $z^3 + pz+q$ for which the points 
in $Q$ are regular values. We show these have the homotopy type of locally
$\CAT(0)$ 2-complexes (and are therefore $K(\pi,1)$s).

First we describe the topology. By the third bullet of 
Proposition~\ref{prop:Hurwitz_basic} we can take $Q$
to be the set of $n$th roots of unity. Then $Y_n = \C^2 - V$, where $V$ is
the hyperplane in $\C^2$ with coordinates $p,q$ for which 
$\prod_j (-4p^3-27(q-\zeta^j)^2) = 0$. 
By a linear change of coordinates, we can replace this hyperplane by
$\prod_j (x^3-(y-\zeta^j)^2)= 0$.

$V$ intersects the plane $x=0$ in exactly the $n$th roots of unity. We foliate
the complement of this plane by (real 3-dimensional) open solid tori 
$S^1 \times \C$ thought of as a bundle over the circle $|x|=t$, and let
$V_\epsilon$ denote the intersection with $V$. If we cutoff $|y|$ at some big $T$,
then we get another solid torus $|y|=T,|x|\le t$ and the union is an $S^3$.
When $|x|=\epsilon$ is small and positive, $V_\epsilon$ splits into a union of 
$n$ trefoils $T^j_\epsilon$ (in this $S^3$), each obtained as a narrow cable of the circle
$y=\zeta^j$. The part of $Y_n$ in the domain $|x| \le \epsilon$ is homotopic
to a wedge of $n$ copies of a $K(B_3,1)$, one for each trefoil.

When $2|x|^{3/2}=|\zeta^j-\zeta^k|$ the trefoils $T^j$ and $T^k$ intersect at
three points, and when $|x|$ increases past this value, they become linked.
There are no other intersections. The link of a crossing (in $\C^2$) is a Hopf
link, and the result of pushing across each such crossing attaches a space to
$Y_n$, homotopic to a 2-torus, attached along a subspace homotopic to a wedge of
two circles. In other words, it attaches a 2-cell, whose boundary kills the
relator which is the commutator of two meridian circles linking the trefoils
at the point of intersection.

For each pair of trefoils $T^j, T^k$, we may choose Garside generators for 
$\pi_1(S^3 - T^j)$ corresponding to these meridian circles (the Garside
presentation for $B_3$ is of the form $\langle a,b,c\; |\; ab=bc=ca \rangle$).
Thus each pair of
trefoils contributes a subgroup of $\pi_1(Y_n)$ of the form
$$\langle a,b,c, x,y,z \; |\; ab=bc=ca, xy=yz=zx, [a,x]=[b,y]=[c,z]=1\rangle$$
However if we follow this chain of relations around a sequence of three
trefoils $T^j, T^k, T^l$ for which $j,k,l$ are positively oriented in $\Z$ mod $n$
(say), the intersection points of each pair of trefoils is successively displaced
by a rotation so that the holonomy of this chain of displacements rotates one
third of the way around. Thus for a triple of trefoils with Garside generators
$(a,b,c)$, $(n,m,o)$ and $(x,y,z)$, the commutation relations take the form
$$[a,n], [b,m], [c,o], [n,x], [m,y], [o,z], [x,b], [y,c], [z,a]$$

Here is another way of packaging the same information. Build a graph with vertices
at the $3n$th roots of unity, and with edges straight line segments between each
pair of roots whose ratio is a 3rd root of unity. Then $\pi_1(Y_n)$ is generated
by the edges of this graph, with relations that each triple of edges that form
a (n equilateral) triangle are Garside generators for a $B_3$, and each pair of
disjoint edges commutes. Furthermore, $Y_n$ is homotopic to the presentation
$2$-complex associated to this presentation. We shall show this $2$-complex
(or: a closely related and homotopic complex) can be given a $\CAT(0)$ structure.

Actually, there is a beautiful trick, that I learned from Jon McCammond, 
arising from his work with Tom Brady \cite{Brady_McCammond} on the 
construction of $\CAT(0)$ {\em orthoscheme complexes} for (certain) 
braid groups. First replace each Garside presentation 
$\langle a,b,c \; | \; ab=bc=ca\rangle$ by a presentation of the form
$\langle a,b,c,d \; | \; ab=bc=ca=d \rangle$. A presentation complex can be
built from three triangles with edges $abd^{-1}$ etc. The trick is to make these
{\em right angled} regular Euclidean triangles --- i.e.\/ to set the lengths of
$a,b,c$ to be $1$, and the length of $d$ to be $\sqrt{2}$. Let $K$ denote the
resulting complex (see Figure~\ref{CAT0_spine}), 
and let $K'$ be the complex built from $n$ copies of $K$
(one for each $B_3$) and one Euclidean square with edge length $1$ for each
commutation relation as above. 
We claim the resulting complex is $\CAT(0)$.

\begin{figure}[htpb]
\centering
\includegraphics[scale=0.8]{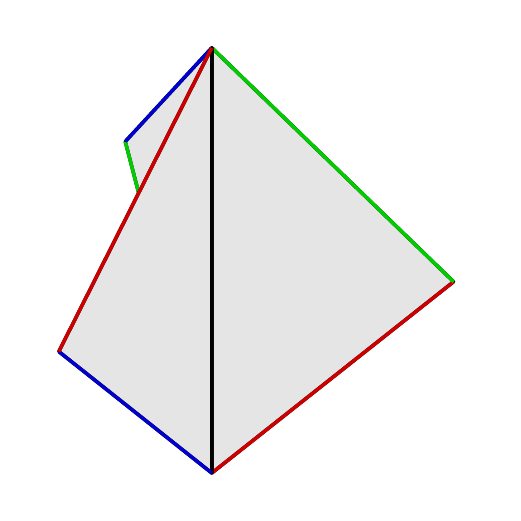}
\caption{$K$ is obtained from this complex by gluing free edges with the same
colors in pairs.}
\label{CAT0_spine}
\end{figure}

Let's see why. The complex $K$ (and $K'$ for that matter) has one vertex; since
these complexes are 2-dimensional and Euclidean, we just need to check that the
link of the vertex has no loop of length $<2\pi$. The link $L$ of the vertex of $K$
is a {\em theta graph}, with three edges of length $\pi$. The intersections with
the long edge $d$ are the vertices of the theta graph, and the intersections with
the edges $a,b,c$ give rise to six points (let's call these {\em short points}), 
each at distance $\pi/4$ from some vertex.

The link $L'$ of $K'$ is obtained from $n$ disjoint copies of $L$ by gluing a
4-cycle with edges of length $\pi/2$ for each commutation relation. Each such
4-cycle can be thought of as a complete bipartite graph on two sets of two points,
and each pair of points is attached to distinct short points in a copy of $L$. 
Since short points in $L$ are all distance $\pi$ apart, no cycle in the graph
associated to two $B_3$s and their commutators has length $<2\pi$. By the way, 
this shows that $\pi_1(Y_2)$ is $\CAT(0)$.

There is a simplicial map from $L'$ to the complete graph $K_n$
with edges all of length $\pi/2$ which just collapses each copy of $L$ to a
point, and identifies edges between the same pair of copies of $L$. 
A loop $\gamma$ in $L'$ of length $<2\pi$ would project to a (possibly immersed) 
simplicial `loop' in $K_n$ of simplicial length at most 3. If the projection 
has simplicial length 0 then $\gamma$ is contained in a copy of $L$ 
which we already know has no loops of length $<2\pi$. Simplicial length 1 
is impossible. If the projection of $\gamma$ has simplicial length $2$ in 
$K_n$ then $\gamma$ is contained in a subgraph formed
from a pair of copies of $L$ which (as we have just discussed) has no loops of
length $< 2\pi$. If the projection of $\gamma$ has simplicial length $3$ 
then it passes through a cycle of three $L$s, and because of the holonomy 
described above, a length $3\pi/2$ path in $\gamma$ has endpoints on the same
copy of $L$ but at {\em different} short points. Thus $\gamma$ has length
at least $2\pi$ and we are done.
\end{proof}

Together with Theorem~\ref{theorem:moduli_spaces} this immediately implies:

\begin{corollary}\label{corollary:degree_4_K_pi_1}
Every moduli space in degree $4$ is a $K(\pi,1)$.
\end{corollary}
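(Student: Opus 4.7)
The strategy is to combine Theorem~\ref{theorem:moduli_spaces} with Theorem~\ref{theorem:Hurwitz_3_CAT_0}. By the former, each degree-$4$ moduli space is built by iterating two operations applied to Hurwitz varieties: forming fiber bundles, and taking finite quotients coming from tag ambiguity. Fiber bundles preserve $K(\pi,1)$-ness by the long exact sequence of homotopy groups of a fibration, and free finite quotients do too by covering space theory. So it suffices to show that every Hurwitz variety appearing as a layer is itself a $K(\pi,1)$, and that the tag action on each layer is free.

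The possible degrees of these Hurwitz varieties are severely restricted by the tree structure of a degree-$4$ sausage shift. At a non-root vertex $w$ the polynomial $p_w$ has degree equal to the multiplicity of $\sigma(w)$ as a preimage under its parent $p_u$; for the root this multiplicity is at most $3$ by Definition~\ref{definition:polynomial}(2), and an induction down the tree bounds every non-root degree $d_w$ by $3$. Hence the non-root layers are Hurwitz varieties of degree at most $3$: degrees $1$ and $2$ are trivially $K(\pi,1)$ (points, $\C$, or $\C$ minus a finite set), and degree $3$ is covered by Theorem~\ref{theorem:Hurwitz_3_CAT_0}. The root layer is the space of degree-$4$ polynomials in normal form with a prescribed root-multiplicity partition, equivalently an ordered configuration space of distinct points in $\C$ (weighted sum zero) quotiented by the symmetry permuting equal-multiplicity roots. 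These configuration-space-type varieties are classical $K(\pi,1)$'s: for the partition $(1,1,1,1)$ one gets the discriminant complement, a $K(B_4,1)$; for each of the partitions $(2,1,1)$, $(2,2)$, $(3,1)$ the variety is either $\C^*$ or a $\C^*$-bundle over a thrice-punctured sphere, both of which are $K(\pi,1)$.

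The main obstacle I anticipate is the bookkeeping around the tag quotients --- one must check that the finite group action induced by tag ambiguity acts freely on each layer, so that the quotient is a covering rather than an orbifold projection. This reduces to tracking at each vertex how rescaling by $(m+1)$st roots of unity at an attachment point affects the coordinates on the subtree below, and verifying that no nontrivial such rescaling fixes the polynomial data of a sausage shift; a fixed polynomial would be forced to be of the form $z^{d_w}$, which is ruled out at the root by Definition~\ref{definition:polynomial}(2) and propagates inconsistently with the presence of genuine critical points further down. Once this is cleared, the corollary follows by induction down the tree, assembling the pieces above via the two preservation properties of $K(\pi,1)$.
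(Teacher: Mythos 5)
Your proposal matches the paper's own (extremely terse) argument: the paper simply says the corollary follows from Theorem~\ref{theorem:moduli_spaces} and Theorem~\ref{theorem:Hurwitz_3_CAT_0}, and what you have written is exactly the unstated bookkeeping behind that --- the degree-bound observation (no quadruple root at the root vertex, so every non-root $d_w\le 3$), the classification of the four possible root Hurwitz varieties $H(\{0\},\sigma,4)$ as $K(\pi,1)$'s, and the passage up the iterated bundle. Two small points of imprecision: for the partition $(2,1,1)$ the root variety is the complement of a tacnode in $\C^2$, which is a \emph{free $\Z/2$-quotient} of a $\C^*$-bundle over a thrice-punctured sphere rather than literally such a bundle (this still gives $K(\pi,1)$); and your argument that tag-fixed polynomials must be $z^{d_w}$ is not right as stated --- a polynomial such as $z^3+q$ is also invariant under $z\mapsto\zeta z$ for $\zeta$ a cube root of unity --- but the tag action on sausage shifts is free for a simpler reason (the tag datum itself changes under every nontrivial tag modification, regardless of whether the polynomial coefficients happen to be fixed), and the paper leaves this point implicit too.
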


\begin{question}
Is every Hurwitz Variety a $K(\pi,1)$? Is every Hurwitz Variety homotopic 
to a $\CAT(0)$ complex?
\end{question}

\subsection{The sausage map}\label{subsection:sausage_map}

Let $\hat{\SS}_d$ be the subspace of $\SS_d$ for which $\log_d(h_1) \in (-1/2,1/2)$,
where $h_1$ is the greatest critical height, and $\log_d$ denotes log to the base
$d$. This space is homeomorphic to $X_d \times (-1/2,1/2)$,
which is to say it is homeomorphic to $\SS_d$ itself. 

For $f \in \hat{\SS}_d$ let $L \in \DL_d$ be the dynamical elamination associated
to $f$ by the butcher map, and let $\Omega$ be the Riemann surface obtained by
pinching $L$ (so that $\Omega$ is canonically isomorphic to the Fatou set of $f$).

Let $\hat{\Omega}$ be the subspace of $\Omega$ with $\log_d(h)\le 1/2$ and
let $S$ be the quotient space of $\hat{\Omega}$ obtained by collapsing each component
with $\log_d(h)\in 1/2 + \Z$ to a point (which we call a {\em node}).

Each component $V$ of $S$ minus its nodes can be given a (branched) Euclidean
structure with horizontal coordinate $\theta$ and vertical coordinate
$\nu(h)$, where $\nu:\R^+ - d^{1/2 + \Z} \to \R$ is a function
that stretches each interval $(d^{n-1/2},d^{n+1/2})$ to $\R$ by a homeomorphism
(depending on $n$) in such a way that the map $z \to z^d$ on $\Omega$ 
is conformal in the new coordinates.

Let's explain this in terms of $\E$. In logarithmic coordinates $h,\theta$ we
can think of $\E$ as a half-open Euclidean cylinder which is the product of
the unit circle with the positive real numbers. The map $z \to z^d$
becomes multiplication by $d$, which we denote $\times d$. For each integer
$n$ let $I_n$ denote the open interval $(d^{n-1/2},d^{n+1/2})$ and let $A_n$
be the annulus in $\E$ where $h \in I_n$, and let $A:=\cup_n A_n \subset \E$.
Thus $\E-A$ is a countable set of circles with $\log_d(h)\in 1/2+\Z$. Thus
$\times d$ takes $A_n$ to $A_{n+1}$ for each $n$.

Choose (arbirarily) an orientation-preserving diffeomorphism $\nu_0:I_0 \to \R$
and for each $n$ define $\nu_n:I_n \to \R$ by $\nu_n(h):=d^n\nu_0(d^{-n}h)$.
Thus, by induction, $\nu_{n+1}(dh)=d\nu_n(h)$ for all $n$ and all $h \in I_n$.
Then define $\mu:A \to S^1 \times \R$ by $\mu(\theta,h)=(\theta,\nu_n(h))$
for $(\theta,h)\in A_n$. Thus $\mu$ semi-conjugates $\times d$ on $A$ to
$\times d$ on $S^1\times \R$. If we identify $S^1\times \R$ conformally with
$\C^*$ by exponentiating, then $\mu$ semi-conjugates $\times d$ on $A$ to
$z \to z^d$ on $\C^*$. If we keep a separate `copy' $\C^*_n:=\mu(A_n)$ for
each $n$, then we could say that $\mu$ conjugates $\times d$ on $A$ to
the self-map of $\cup_n \C^*_n$ that sends each $\C^*_n$ to $\C^*_{n+1}$ by
$z \to z^d$.


The components of $S$ minus its nodes are obtained from the $A_n$ by cut
and paste along segments of $L$, an operation which respects the 
Euclidean structure both in $h,\theta$ and $\nu(h),\theta$ coordinates

With respect to this branched Euclidean structure, the closure of each $V$ 
(i.e.\/ putting the nodes back in) is a compact Riemann
surface; in fact, it is isomorphic to $\CP^1$, and it is natural to choose
$\infty$ to be the (unique) node of greatest height. Thus $S$ becomes an
infinite nodal genus 0 Riemann surface. Furthermore although the quotient
map from $\hat{\Omega}$ to $S$ is very far from being holomorphic, the map
$z \to z^d$ on $\Omega$ does descends to a {\em holomorphic} map $p$ 
from $S$ to its augmentation giving $S$ the structure of a bunch of sausages,
and $p$ the structure of a degree $d$ shift polynomial. Notice that the
images of the critical points are precisely the genuine critical points of
the sausage polynomial.

Tags are defined at the nodes by identifying the unit tangent bundle at 
each node with a circle in $\Omega$, and inductively pulling back tags
compatibly with the dynamics of $z \to z^d$ so that the tag at the unique node in
the root of the augmentation corresponds to the argument $\theta=0$ (this
is well-defined, since $\theta$ takes values in $\R/\Z$ in the subspace of
$\Omega$ with $h$ greater than any critical height). 

\begin{theorem}[Sausage map]\label{theorem:sausage_map}
The sausage map is surjective, and is $1$--$1$ on the subspace of $\hat{\SS}_d$
for which no critical leaf $C_j$ has $\log_d(h_j) \in 1/2 + \Z$. This subspace 
maps bijectively to the set of isomorphism classes of degree $d$ sausage shifts.
\end{theorem}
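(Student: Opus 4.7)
The plan is to construct an inverse to the sausage map on the generic subspace where no critical leaf satisfies $\log_d h_j \in 1/2 + \Z$. Since the sausage map factors as $f \mapsto L \mapsto \Omega \mapsto \hat{\Omega} \mapsto S$ through the butcher map, pinching, restriction, and collapse of level sets, and since Theorem~\ref{theorem:realization} already provides a bijection $\Psi : \DL_d \to \SS_d$, it suffices to recover the dynamical elamination $L$ from the sausage shift data $(S, p)$ and then invoke Realization.

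Given an isomorphism class of sausage shift $(S, p)$ over $\tau : T \to T'$, I would reconstruct $\hat{\Omega}$ as a branched Euclidean surface by placing each sausage $\CP^1_w$ at heights $h \in [d^{n-1/2}, d^{n+1/2}]$ where $n = -\text{depth}(w)$, with the height function pulled back from the canonical Euclidean structure on the augmentation root $\CP^1_{v'}$ (identified with an annular piece of $\E$) through iterates of $p$. The tag at each $\sigma(w) \in Z_u$ specifies the rotational alignment required to reglue the $\infty$-neighborhood of $\CP^1_w$ to the $\sigma(w)$-neighborhood of $\CP^1_u$ compatibly with the dynamics. The resulting $\hat{\Omega}$ coincides with $\E \mod L_{\le 1/2}$ for a canonical finite elamination $L_{\le 1/2}$ whose leaves are the cut-paste locus of the gluings, and iterating the dynamics backwards (pulling back along $z \mapsto z^d$) extends $L_{\le 1/2}$ uniquely to a full $L \in \DL_d$. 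The nondegeneracy clause in Definition~\ref{definition:polynomial}(2) guarantees $\log_d h_1 \in (-1/2, 1/2)$, so $f := \Psi(L) \in \hat{\SS}_d$. This establishes surjectivity. For injectivity on the generic subspace, two generic $f_1, f_2 \in \hat{\SS}_d$ with isomorphic sausage shifts must reconstruct the same $L$ (genericity forbidding the critical-point-on-node ambiguity described below), and uniqueness in Theorem~\ref{theorem:realization} then forces $f_1 = f_2$.

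The main technical obstacle is verifying that the reconstructed $L$ satisfies the elamination axioms of Definition~\ref{definition:elamination}, in particular saturation: any shorter leaf sharing one endpoint with a taller leaf must share the other endpoint too. This should follow from bullet (3) of Definition~\ref{definition:polynomial} --- that $p_w^{-1}(Z_{\tau(w)}) = Z_w$ as tagged sets --- combined with the bijection $\sigma$ between children and marked points, which together force the pinched circles $S^1 \mod L_{\le n}$ to decompose in lockstep with the children of each sausage. A secondary bookkeeping point is that the tag-isomorphism ambiguity $\prod_{w, z \in Z_w}(m(z)+1)$ on sausage shifts must be shown to match exactly the rotational freedom in the cut-paste gluings on the $\DL_d$ side; this is a finite check tracking how tags transport under each $p_w$. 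Finally, the generic condition excludes precisely the degenerate configuration where a critical point of some iterate $f^n$ lies on a node of $S$: in that case the sausage data cannot distinguish whether the critical behavior is encoded by the parent polynomial at the node or by the child polynomial at $\infty$, so two distinct $f$'s can produce isomorphic sausage shifts --- which is exactly why injectivity can only hold on the generic locus.
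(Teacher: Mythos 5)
Your overall strategy --- build an inverse, reconstruct the Fatou set from the sausage data, and invoke the Realization Theorem --- is the same as the paper's. The key difference is a detour: you propose to reconstruct the dynamical elamination $L$ first and then apply $\Psi$, whereas the paper reconstructs the Riemann surface $\Omega$ directly by cut-and-paste (cut along nodes, sew in unit-tangent circles, reparameterize by $\mu^{-1}$, glue borders respecting arguments and tags) and then applies the Realization Theorem to the pair $(\Omega, F)$. The detour costs you precisely the obstacle you flag: you must verify that the cut-paste locus is an elamination (properness, saturation) and a \emph{dynamical} elamination at that, before $\Psi$ can be invoked. The paper's construction makes this moot --- it never produces $L$, only $\Omega$, and the Realization proof only needs the topological properties of $\Omega$ (planarity, one isolated puncture, Cantor set of ends, degree-$d$ self-map with $d-1$ critical points). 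If you wanted to salvage your version, it would be cleaner to reconstruct only the finite critical set $C$ from the genuine critical points of the $p_w$ (together with the gluing tags to place them in $\E$), and then cite Proposition~\ref{proposition:dynamical_lamination} for the unique dynamical elamination generated by $C$, rather than verifying the elamination axioms by hand.

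Two smaller points. First, the phrase ``extends $L_{\le 1/2}$ uniquely to a full $L$'' is a slip: once $\log_d h_1 < 1/2$, \emph{all} leaves of $L$ already have $\log_d h < 1/2$, and the infinite tree $T$ already encodes leaves at all depths; there is nothing left to extend. Second, your account of why genericity is needed is essentially correct (a critical or precritical leaf landing exactly on a collapsed circle creates an ambiguity between parent and child encoding of the critical behavior), and this matches the role the condition plays in the paper, though the paper leaves it implicit. Your secondary worry --- matching the tag-isomorphism ambiguity $\prod (m(z)+1)$ to the rotational freedom in the gluings --- is exactly the content of the sentence ``By the definition of isomorphism, the gluing is well-defined on an isomorphism class of sausage shift,'' so you have correctly identified where that bookkeeping lives.
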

\begin{proof}
It suffices to define a (continuous) inverse. Here is the construction. Cut open
a bunch of sausages along its set of nodes and sew in a copy of the unit tangent
circle $U$ at each point. Reparameterize the vertical coordinate on each component
by the inverse of $\mu$ (here we must choose the correct branch depending on the
combinatorial distance to the root). 
Each component becomes in this way a bordered Riemann surface.
The point $\infty$ in each $\CP^1_w$ gets a canonical
tag, namely the vector associated to the positive real axis. Thus we obtain a
collection of bordered surfaces, so that each border is a round circle with a tag,
and we glue these up respecting arguments and tags. By the definition of isomorphism,
the gluing is well-defined on an isomorphism class of sausage shift.
The result is a complete
planar Riemann surface $\Omega$ with one punctured end, and the sausage polynomial
descends to a degree $d$ self-map on $\Omega$ with $(d-1)$ critical points,
counted with multiplicity. By the Realization Theorem~\ref{theorem:realization}
this is the Fatou set of a unique shift polynomial.
 \end{proof}

\begin{theorem}[Monkey pieces are moduli spaces]\label{theorem:monkey_moduli}
The sausage map induces homeomorphisms from $(-1/2,1/2)$ times the open
monkey prisms and monkey turnovers arising in the decomposition in 
Theorem~\ref{theorem:degree_d_complex} to the moduli spaces of
degree $d$ sausage shifts of each fixed combinatorial type.
\end{theorem}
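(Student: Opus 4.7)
The plan is to combine Theorem~\ref{theorem:sausage_map}, which gives a bijection between (the generic part of) $\hat{\SS}_d$ and isomorphism classes of sausage shifts, with the observation that both decompositions---the building decomposition of $X_d$ and the stratification of sausage-shift space by combinatorial type---are controlled by the same integer data, namely the placement of the critical heights in the dyadic scale $\log_d h \in 1/2+\Z$.

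First I would work with the cellulation $\kappa'$ rather than $\tau'$; by Lemma~\ref{lemma:degree_d_equivalent_cells} the pieces associated to a vertex $v$ under the two cellulations are homeomorphic inside $X_d$, so it suffices to prove the theorem for the pieces over an open cell of $\kappa'$. Recall that $\kappa'$ is dual to the cubical honeycomb cut out by the walls $t_{i1} \in \Z$, so an open cell of $\kappa'$ dual to a vertex $v = (n_2,\dots,n_{d-1})$ is a product of intervals in which $\lfloor t_{i1}\rfloor = n_i$ is constant. The key combinatorial lemma I would prove is that on such a cell the combinatorial type of the sausage shift associated by the sausage map is constant. This is immediate from the construction in \S~\ref{subsection:sausage_map}: the nodes of $S$ occur exactly where $\log_d h \in 1/2+\Z$, so the depth in the tree $T$ at which the critical point $c_i$ appears is precisely $n_i$, and the marked-point combinatorics at each sausage vertex above $c_i$ is determined (via continuous transport of the dynamical elamination) by which connected component of $\rho^{-1}$ we are in.

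Second I would check that on each open cell of $\kappa'$ the sausage map maps onto the moduli space of the corresponding combinatorial type and is continuous with continuous inverse. Injectivity is given by Theorem~\ref{theorem:sausage_map}. For surjectivity, given a sausage shift $(S,p)$ of the prescribed combinatorial type, the inverse construction in the proof of Theorem~\ref{theorem:sausage_map} assembles a shift polynomial whose critical heights are forced by the tree depth of each $c_i$ to lie in the correct interval, and whose angle coordinates $\theta_j$ are determined (up to the tag ambiguity) by the coefficients of the $p_w$. Holomorphic dependence of the $p_w$ on the coefficients in the Hurwitz-variety structure of Theorem~\ref{theorem:moduli_spaces}, together with the holomorphicity of $\Psi$ on $\DL_d$, gives continuity in both directions. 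The extra factor of $(-1/2,1/2)$ on the $X_d$ side corresponds precisely to the squeezing direction $\log_d(h_1) \in (-1/2,1/2)$ in $\hat{\SS}_d$, which on the sausage side rescales the root polynomial but does not change its combinatorial type.

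The main obstacle, and the place where one has to be careful, is the turnover case. A monkey turnover arises from a monkey prism by quotienting by a finite group action on some boundary stratum, and these finite groups precisely correspond to equalities among critical heights that create genuine symmetries of the critical data (for instance, the $\Z/2$ switching $\theta_1\leftrightarrow \theta_2$ that appears in the proof of Theorem~\ref{theorem:S_3_topology}). On the sausage side, the matching ambiguity is the tag-isomorphism relation described after Definition~\ref{definition:polynomial}, which identifies $\prod_{w,z}(m(z)+1)$ sausage polynomials via the $(m(z)+1)$st roots of unity acting on the sausage at a critical node. The heart of the argument is to match these two group actions edge-for-edge: the tag rotation by an $(m+1)$st root of unity at a node of multiplicity $m$ is exactly the monodromy rotation around the corresponding wall of the building. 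Once this matching is verified, the turnover case follows from the prism case by passing to the quotient, and the theorem is proved.
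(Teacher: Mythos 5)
Your overall approach---working with the cubical cellulation $\kappa'$ via Lemma~\ref{lemma:degree_d_equivalent_cells}, observing that nodes occur exactly where $\log_d h \in 1/2+\Z$ so the tree depth of each critical point is the integer $n_i$, and then applying Theorem~\ref{theorem:sausage_map}---is exactly the paper's approach, just with the implicit steps made explicit. The paper's proof is essentially three sentences and leaves these elaborations to the reader, so filling them in is reasonable and the prism case is fine.

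However, your treatment of the turnover case misidentifies two genuinely different finite group actions. The tag-isomorphism relation after Definition~\ref{definition:polynomial} (multiplying a tag at a multiplicity-$m$ node by an $(m+1)$st root of unity) is present for \emph{every} moduli space, prism and turnover alike; it is the price one pays for decorating the nodal surface with tags so that the reconstruction map in Theorem~\ref{theorem:sausage_map} is well-defined, and it is already quotiented away in passing to isomorphism classes of sausage shifts. The finite group that produces a monkey turnover from a monkey prism is something else entirely: it permutes critical leaves of equal height, reflecting the boundary-of-the-Weyl-chamber symmetry $h_i=h_j$, as in the $\Z/2$ of Figure~\ref{quotient_torus}. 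On the sausage side this quotient requires no matching at all, because a sausage shift does not carry a labeling of its critical points --- the moduli space records a polynomial $p_w$, not a choice of ordering of its critical values. So when critical heights coincide, the sausage moduli space is \emph{already} the quotient; you should not try to account for it via tags. Replacing the final paragraph with the observation that the moduli space of sausage shifts is, by construction, unordered in its critical heights would repair the argument while keeping the rest of your proof intact.
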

\begin{proof}
The factor of $(-1/2,1/2)$ comes from the difference between $\hat{\SS}_d$
and $X_d$, via orbits of the squeezing flow. 

This is a consequence of Theorem~\ref{theorem:sausage_map} and 
Lemma~\ref{lemma:degree_d_equivalent_cells}. 
Explicitly: the components of the images
of the sausage map are (up to this factor of $(-1/2,1/2)$) 
both the subspaces of $\rho^{-1}(W)$ in the preimage
of the cells $\kappa'$, and at the same time they are (by definition) the moduli
spaces of generic degree $d$ sausage shifts.
\end{proof}

Together with Corollary~\ref{corollary:degree_4_K_pi_1} and the discussion in
\S~\ref{subsection:K_pi_1} this completes the proof of Theorem~\ref{theorem:S_4_K_pi_1}. 
Moreover, together with Example~\ref{example:degree_3_sausages}, this completes 
the proof of Theorem~\ref{theorem:powers_of_2}.

\subsection{Sausages and combinatorics of the Tautological Elamination}\label{subsection:sausage_tautological}
We have already seen (Example~\ref{example:degree_3_sausages}) that moduli
spaces reveal nontrivial information about the tautological elamination.
Let $\Lambda_T$ denote the (depth 3) tautological elamination for some fixed 
$\theta_1$, and let $\Lambda_{T,n}$ denote the subset of leaves of depth $\le n$.
We have seen that monodromy permutes the components of $S^1$ mod $\Lambda_{T,n}$
in such a way that the orbits have length a power of 2.

We claim that these components all have {\em lengths} of the form $2^m/3^n$
for various $m$. Fix a sausage polynomial as in 
Example~\ref{example:degree_3_sausages} where the root vertex $v$ has
$Z_v=c,-2c$, and where there is a chain of vertices $u_1,\cdots,u_n$ mapping
to vertices $v=t_1,t_2,\cdots,t_n$ by polynomials $p_j:=z^2 + c_j$ so that
$0$ is a fake critical point for each $j<n$ (i.e.\/ $c_j \in Z_{t_j}$)
and a genuine one for $j=n$ ($c_n$ is not in $Z_{t_n}$).

The components of $S^1$ mod $\Lambda_{T,n}$ associated to sausages of this
combinatorial form are in bijection with the points of $Z_{t_n}$. Each
$w \in Z_{t_n}$ maps by a succession of polynomials of degrees 1 or 2 until
it reaches $c$ or $-2c$ (which themselves are mapped to $0$ by $p_v$).
The length of a component is multiplied by $1/3$ when we pull back a regular
value, and is multiplied by $2/3$ when we pull back a critical value. This
proves the claim.

Table~\ref{nltable} shows the number of 
components of length $\ell/3^n$ at each depth $n$ (omitted entries are zeroes).

\begin{table}[ht]
{\small
\centering
\begin{tabular}{c|c c c c c c c c c c c c c} 
 $n\backslash \ell$ & 1 & 2 & $2^2$ & $2^3$ & $2^4$ & $2^5$ & $2^6$ & $2^7$ & $2^8$ & $2^9$ & $2^{10}$ & $2^{11}$ & $2^{12}$\\
 \hline
 0 &  1\\ 
 1 &  1 & 1\\
 2 &  3 & 1 & 1\\
 3 &  7 & 6 & 0 & 1\\
 4 &  21 & 16 & 3 & 0 & 1\\
 5 &  57 & 51 & 13 & 0 & 0 & 1\\
 6 &  171 & 149 & 39 & 5 & 0 & 0 & 1\\
 7 &  499 & 454 & 117 & 23 & 0 & 0 & 0 & 1\\
 8 &  1497 & 1348 & 360 & 66 & 9 & 0 & 0 & 0 & 1\\
 9 &  4449 & 4083 & 1061 & 207 & 41 & 0 & 0 & 0 & 0 & 1\\
 10 & 13347 & 12191 & 3252 & 591 & 126 & 17 & 0 & 0 & 0 & 0 & 1\\
 11 & 39927 & 36658 & 9738 & 1799 & 370 & 81 & 0 & 0 & 0 & 0 & 0 & 1\\
 12 & 119781 & 109898 & 29292 & 5351 & 1125 & 240 & 33 & 0 & 0 & 0 & 0 & 0 & 1\\
 \hline
\end{tabular}
}
\vskip 12pt
\caption{Number of components of length $\ell/3^n$ at depth $n$}\label{nltable}
\end{table}

Note that there is a unique component with $\ell=2^n$ for each $n$;
this corresponds to the sausages for which $t_j=u_{j-1}$, $p_{u_1}=z^2+c$
and $p_{u_j}=z^2$ for $1<j<n$. The next biggest components have length 
$2^{\lfloor n/2 \rfloor}/3^n$.

The $(n,\ell)$ entry in this table is the number of components of length 
$\ell/3^n$ at depth $n$. If we denote this entry $N(n,\ell)$ then
$$\sum_\ell N(n,\ell) = (3^n+1)/2 \text{ and } \sum_\ell N(n,\ell)\cdot \ell = 3^n$$

\begin{example}[Recurrence]
Eric Rains observed the recurrence relation in the first column that
$$N(2n,1)=3\cdot N(2n-1,1) \text{ and } N(2n+1,1)=3\cdot N(2n,1)- 2\cdot N(n,1)$$
(a similar recurrence holds in higher degree).
The proof of this is surprisingly delicate, and will appear in a forthcoming
paper \cite{Calegari_combinatorics}.
\end{example}

\begin{example}[Short $\ell$ sequences]
One reason to be interested in the lengths of components of $S^1$ mod $\Lambda_{T,n}$
is that it gives us insight into the geometry of the {\em complement} of $\SS_3$.
Actually, it is easy enough to describe the picture in arbitrary degree.

For each degree $d$ the {\em shift complement} is $\C^{d-1}-\SS_d$. 
When critical points are simple, order them by height $h_1 \ge h_2 \ge \cdots h_{d-1}$, 
and define a {\em butcher's slice} $B(C_1,\cdots,C_{d-2})$ to be the subset of 
$\SS_d$ with $C_1,\cdots, C_{d-2}$ fixed and $h_{d-1}<h_{d-2}$. 
There is a tautological elamination $\Lambda_T(C_1,\cdots, C_{d-2})$
(see \S~\ref{subsection:degree_d_tautological}),
 and the result of pinching gives a Riemann surface 
$\Omega_T$ for which the subset of height $<h_{d-2}$ is holomorphically equivalent to 
$B$.

For the sake of simplicity, let's suppose $1=h_1=h_2=\cdots h_{d-2}$ so that the
leaves of $\Lambda_T$ of depth $n$ all have height $d^{-n}$.
A chain of successive components of $S^1$ mod $\Lambda_{T,n}$ with lengths
$\ell_n\cdot d^{-n}$ determines a system of disjoint annuli in the butcher's 
slice with moduli  $1/\ell_n$. So if $\sum_n 1/\ell_n$ diverges 
(for instance, if the sequence $\ell_n$ is bounded), 
the modulus goes to infinity and the end of $B$ converges to an 
{\em isolated} point in the complement of the shift locus. Call such an end of $B$
a {\em small end}. All but countably many of the (uncountable) ends of $B$ are small.

As we exit a small end of $B$, points in the Julia set collide in the limit 
to give rise to a non-shift Cantor Julia set 
(c.f.\/ Example~\ref{example:Cantor_non_shift}; also compare with
Branner \cite{Branner}). The local path 
component of the shift complement containing this limit 
point has complex dimension $d-2$, and is parameterized by the escaping 
critical points. There are uncountably many of these local path components, 
parameterized locally by the small ends of $B$.

Dragging critical points off to the (Cantor) Julia set one by one defines a nested
sequence of holomorphic submanifolds of the shift complement, each 
parameterized by the remaining escaping critical points.
When $C_{j+1}\cdots C_{d-2}$ have been dragged off to $J_f$, we can define a
butcher's slice by fixing $C_1,\cdots,C_{j-1}$ and letting $C_j$ vary; this
slice is the subset of height $<h_{j-1}$ in the Riemann surface $\Omega_T(C)$
associated to the tautological elamination $\Lambda_T(C)$ with critical data
$C:=C_1,\cdots,\hat{C}_j,\cdots C_{d-1}$ for a suitable equivalence class of
$C_{j+1},\cdots, C_{d-2}$ (see \S~\ref{subsection:degree_d_tautological}). 
Small ends of these butcher's slices locally parameterize the space of 
these $(j-1)$-dimensional submanifolds. 
\end{example}

\section{Fundamental Groups}\label{section:fundamental_groups}

\subsection{Braid Groups}

Let $\Delta_d \subset \C^{d-1}$ be the discriminant variety, parameterizing degree
$d$ polynomials in normal form $z^d + a_2 z^{d-2} + \cdots a_d$ with multiple roots.
The group $\pi_1(\C^{d-1}-\Delta_d)$ acts as permutations of these roots; the
permutation representation is a surjective map from $\pi_1(\C^{d-1}-\Delta_d)$ to the
symmetric group $S_d$. 

This map is very far from being injective. A loop in $\C^{d-1}-\Delta_d$ defines 
not just a permutation of roots, but a {\em braid}: the mapping class represented by
the combinatorial manner in which the points move around each other.
In other words, there is a {\em monodromy representation} 
$\Mon:\pi_1(\C^d - \Delta_d) \to B_d$ where $B_d$ is Artin's {\em braid group} on
$d$ strands. Forgetting the braiding determines a surjection $\Art:B_d \to S_d$.

Thus we obtain a factorization
$$\pi_1(\C^{d-1}-\Delta_d) \xrightarrow{\Mon} B_d \xrightarrow{\Art} S_d$$
where the first map is an isomorphism, and the second indicates that $B_d$ is
functorially obtained from $S_d$ by the algebraic process of {\em Artinization}.

\subsection{Shift automorphisms}

Let $\Sigma_d$ denote the space of right-infinite words on a $d$ letter alphabet; i.e.\/
$\Sigma_d:=\lbrace 1,\cdots,d\rbrace^\N$. This is a Cantor set in the product topology,
and the shift $\sigma$ acts as a $d$ to $1$ expanding map. Let $\hat{S}_d$ denote the
group $\hat{S}_d:=\Aut(\Sigma_d,\sigma)$; i.e.\/ the group of homeomorphisms of the Cantor
set commuting with the shift. 

In \cite{Blanchard_Devaney_Keen}, Blanchard--Devaney--Keen showed that the natural map
$\pi_1(\SS_d) \to \hat{S}_d$ is surjective, in every degree $d$. As before, this is
very far from being injective (as we shall shortly see).

Monodromy defines a representation
$\Mon:\pi_1(\SS_d) \to \Mod(\C - \text{Cantor set})$, but this map is certainly not
an isomorphism, since $\pi_1(\SS_d)$ is countable whereas $\Mod(\C-\text{Cantor set})$ has
the cardinality of the continuum. Actually, the image can be lifted to 
$\Mod(\text{Disk} - \text{Cantor set})$, since all shift polynomials (in normal form)
are tangent to second order near infinity. Let's denote the image by $\hat{B}_d$.

Forgetting the braiding defines a surjective
homomorphism $A:\Mod(\text{Disk}-\text{Cantor set}) \to \Aut(\text{Cantor set})$, and the
image of $\hat{B}_d$ is $\hat{S}_d$. I proved (see \cite{Calegari_planar}) that 
$\Mod(\text{Disk}-\text{Cantor set})$ is left-orderable, and therefore torsion-free,
whereas $\hat{S}_d$ is generated by torsion.

In any case we have a factorization of the Blanchard--Devaney--Keen map as
$$\pi_1(\SS_d) \xrightarrow{\Mon} \hat{B}_d \xrightarrow{A} \hat{S}_d$$
Neither map seems easy to understand. On the other hand, with Juliette Bavard and
Yan Mary He we were able to show:

\begin{theorem}[Bavard--Calegari--He]\label{theorem:monodromy_3}
In degree $3$ the map $\Mon:\pi_1(\SS_3) \to \hat{B}_3$ is an isomorphism.
\end{theorem}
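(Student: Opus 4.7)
The plan is to prove injectivity of $\Mon$; surjectivity is built into the definition of $\hat{B}_3$ as the image of the monodromy map. We exploit the explicit structure of $X_3$ from Theorem~\ref{theorem:S_3_topology}: the decomposition $X_3 = (S^3 \setminus \nu(\mathrm{trefoil})) \cup_{T^2} N_\infty$ yields, via Van Kampen,
\[
\pi_1(\SS_3) \cong \pi_1(X_3) \cong B_3 *_{\Z^2} \bigl(\pi_1(D_\infty) \rtimes_{\varphi_*} \Z\bigr),
\]
where the peripheral $\Z^2$ is the fundamental group of the gluing torus. The proof has three main steps: show each factor of the amalgamated product maps injectively into $\hat{B}_3$; identify the images of the peripheral $\Z^2$ explicitly on both sides; and use Bass--Serre theory or a normal-form argument to combine these into full injectivity.

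For the trefoil factor, one uses the classical faithful action of $B_3 \cong \pi_1(S^3 \setminus \mathrm{trefoil})$ on the thrice-punctured disk. Under $\Mon$ this factor acts on $D_\infty$ by rigidly braiding the three top-level subdisks, each containing one of the three macroscopic clumps of the Cantor set corresponding to the three sausages immediately below the root in the degree-3 sausage picture. This extends the classical $B_3$-action on three marked points, and hence is injective. For the mapping-torus factor, one uses the explicit description of $\varphi$ as an infinite composition of fractional Dehn twists $\tau_\gamma$ indexed by leaves $\gamma$ of the tautological elamination $\Lambda_T$. The image of $\pi_1(D_\infty) \rtimes \Z$ in $\Mod(D_\infty)$ has its $\Z$-factor generated by $\varphi$, which has infinite order by Theorem~\ref{theorem:powers_of_2} (orbits on the Cantor set have arbitrarily large power-of-2 length), and its $\pi_1(D_\infty)$-factor generated by half-twist analogs whose interactions are governed by the combinatorics of $\Lambda_T$. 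Since distinct generators of $\pi_1(D_\infty)$ produce mapping classes with distinguishable supports, a Birman-style argument gives injectivity on the fiber, and the standard structure of a semidirect product then yields injectivity of $\Mon|_{\pi_1(N_\infty)}$.

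The main obstacle is combining these step-wise injections into injectivity of the full amalgamated product. A priori, the amalgamation could introduce hidden relations in $\hat{B}_3$ between $B_3$-braidings and the infinite family of fractional Dehn twists, coincidences that would be invisible from the two step-wise injections. The cleanest route is to exhibit a natural action of $\hat{B}_3$ on a simplicial tree $\mathcal{T}$ with vertex stabilizers containing $\Mon(B_3)$ and $\Mon(\pi_1(N_\infty))$ and with edge stabilizers equal to $\Mon(\Z^2)$; normal form for amalgamated products then gives the desired injectivity at once. A natural candidate for $\mathcal{T}$ is the infinite rooted ternary tree encoding the self-similar nesting structure of the Cantor Julia set, on which $B_3$ acts by permuting the three subtrees below the root and $\pi_1(N_\infty)$ stabilizes this action while acting on the deeper combinatorics through $\Lambda_T$. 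Constructing this tree with the correct stabilizers, or equivalently ruling out hidden relations by another method, will be the principal technical challenge.
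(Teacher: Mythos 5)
The paper does not actually contain a proof of this theorem: the text immediately following the statement says ``The proof of this theorem shall (hopefully!) appear in a forthcoming paper,'' so there is no in-paper argument against which to compare your proposal directly. The only hints the paper offers appear a few paragraphs later, where it is remarked that the Monodromy Conjecture ``is \emph{equivalent} to injectivity on these pieces, since both $\pi_1(\SS_d)$ and $\hat{B}_d$ are built up in understandable ways from these pieces (this is how Theorem~\ref{theorem:monodromy_3} is proved).'' Here ``pieces'' refers to the fundamental groups of the Hurwitz varieties/moduli spaces, whose monodromies land in honest finite braid groups $B_n$. Your Van Kampen decomposition $\pi_1(X_3) \cong B_3 *_{\Z^2} (\pi_1(D_\infty) \rtimes_{\varphi_*} \Z)$ is correct and is a reasonable reorganization of the same data (the trefoil complement is one moduli space, the mapping torus $N_\infty$ is the infinite union of the others), so the broad strategy --- prove injectivity piece by piece, then combine --- is consistent with what the paper hints at.

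That said, your proposal has two genuine and substantial gaps, the second of which you yourself flag. First, the piecewise injectivity claims are asserted rather than proved: the statement that the $B_3$ factor acts on $D_\infty$ by ``rigidly braiding the three top-level subdisks'' and is therefore faithful needs an actual argument, since an element of $\pi_1(S^3 - \text{trefoil})$ could a priori act trivially on the Cantor set while moving the three top clumps nontrivially (or vice versa), and the map $\Mon$ lands in $\Mod(\text{Disk} - \text{Cantor set})$, not in $\Mod$ of a thrice-punctured disk. Similarly, the claimed injectivity of $\pi_1(D_\infty) \rtimes \Z \to \hat{B}_3$ via a ``Birman-style argument'' requires care: $\pi_1(D_\infty)$ is an infinitely generated free group and $\Mon$ restricted to it is not the identity but rather a map into the big mapping class group whose faithfulness is nontrivial (this is closely tied to the Alexander-method-style arguments for big mapping class groups). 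Second, and more fundamentally, the combination step is the real content of the theorem. You correctly note that step-wise injections do not automatically yield injectivity of an amalgamated product, and you propose constructing a $\hat{B}_3$-tree with the right stabilizers --- but you do not construct it. The paper's remark that $\hat{B}_d$ is ``built up in understandable ways from these pieces'' is exactly the assertion that such a compatible decomposition of the target exists, and proving that assertion is the heart of the matter. Without it, the argument does not close.
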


The proof of this theorem shall (hopefully!) appear in a forthcoming paper.
The most optimistic conjecture I can make is:
\begin{conjecture}[Monodromy Conjecture]
The map $\Mod:\pi_1(\SS_d) \to \hat{B}_d$ is an isomorphism in every degree.
\end{conjecture}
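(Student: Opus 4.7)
The plan is to combine the complex-of-spaces decomposition of Theorem~\ref{theorem:degree_d_complex} with the sausage-moduli interpretation of cells in Theorem~\ref{theorem:monkey_moduli}. Since $\hat{B}_d$ is by definition the image of $\Mon$, surjectivity is automatic, and the Blanchard--Devaney--Keen result already gives surjectivity of $\Mon$ composed with $A$. The content is therefore injectivity, and the natural strategy is local injectivity on each cell plus a global gluing argument over the $\tilde{A}_{d-2}$ building.

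For the local step, I would use Theorem~\ref{theorem:moduli_spaces} to present $\pi_1$ of each monkey prism or turnover as an iterated extension of $\pi_1$'s of Hurwitz varieties. The monodromy into $\hat{B}_d$ reads off explicitly from the sausage construction: a loop in the moduli space of a vertex polynomial $p_w$ around its Hurwitz discriminant permutes and braids the tagged preimages $Z_w$, and via Theorem~\ref{theorem:realization} this lifts to a mapping class of the disk minus a Cantor set whose action on ends is determined combinatorially by the sausage tree. Injectivity on each Hurwitz factor should then reduce to the classical injectivity of the monodromy representation of a configuration space of $\CP^1$ into the corresponding surface braid group; the tags at nodes are exactly the extra data needed to promote a permutation of roots into a braid, so no monodromy is lost at sausage gluings.

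For the global step, I would exploit the fact that the $\tilde{A}_{d-2}$ building is contractible and that the squeezing flow retracts any loop into a bounded subcomplex, so injectivity of $\Mon$ reduces to a statement about finite star neighborhoods of cells. Across each wall $t_{ij}\in \Z$ of the building, the gluing is controlled by the pinching of a specific circle factor of the fiber torus; one must check that the van Kampen relation imposed there agrees precisely with the relation in $\hat{B}_d$ coming from isotoping the corresponding curve across the pinch. Theorem~\ref{theorem:monodromy_3} supplies the base case of sausage trees with a single branching vertex, and I would try to upgrade it by an inductive walk outward through the building using the explicit description of each new wall-crossing.

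The main obstacle I anticipate lies with the monkey turnovers rather than the prisms. Turnovers arise from finite symmetry groups acting on combinatorially equivalent critical configurations, and matching the resulting quotient relations in $\pi_1$ against the relations in $\hat{B}_d$ that come from forgetting labels of critical points is subtle. Example~\ref{example:lamination_star_of_david} already shows that a turnover can carry commutation relations beyond those of a free product of braid groups, and even the $K(\pi,1)$ property was proved only through the sausage detour. A related, more serious obstacle is that monkey turnovers are only known to be $K(\pi,1)$s in degrees $\le 4$ (Corollary~\ref{corollary:degree_4_K_pi_1}); to make van Kampen faithfully compute $\pi_1(\SS_d)$ in general one likely needs first to extend Theorem~\ref{theorem:Hurwitz_3_CAT_0} to a statement that every connected Hurwitz variety is homotopic to a locally $\CAT(0)$ complex, which is precisely the open question raised at the end of \S~\ref{section:sausages}.
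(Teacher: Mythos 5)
This statement is not proved in the paper. It is labeled a \emph{conjecture}, and immediately afterward the author writes that the only evidence in its favor ``is that it is not obviously falsified by the simplest cases I was able to fully analyze.'' So there is no ``paper's proof'' to compare against; you have supplied a proof strategy for something the paper explicitly leaves open. That said, the \emph{framework} you outline is essentially the one the paper itself gestures at in \S~\ref{section:fundamental_groups}: the paper observes that (at least in low degree) the Monodromy Conjecture is equivalent to injectivity of the maps $M:\pi_1(Y)\to B_n$ on the Hurwitz-variety pieces, since both $\pi_1(\SS_d)$ and $\hat{B}_d$ are assembled from these pieces, and remarks that this is how Theorem~\ref{theorem:monodromy_3} (itself deferred to a forthcoming paper) is proved in degree $3$.

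The substantive problem is the sentence where you claim that injectivity on each Hurwitz factor ``should then reduce to the classical injectivity of the monodromy representation of a configuration space of $\CP^1$ into the corresponding surface braid group.'' This is precisely the piece the paper flags as unknown, not classical. For the discriminant complement the monodromy $\pi_1(\C^{d-1}-\Delta_d)\to B_d$ is an isomorphism, but for the more general Hurwitz varieties $H(Q,\sigma,e)$ with $|Q|\ge 2$ there is no off-the-shelf injectivity theorem, and the paper gives the Star of David example as a test case where injectivity of $G\to B_6$ (for $G$ the explicitly presented group with three commuting pairs of Garside generators) is argued only to be ``quite plausible''---contingent on an unchecked claim that a certain isometric embedding of $\CAT(0)$ complexes is totally geodesic. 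You correctly identify the turnover/$K(\pi,1)$ obstacle for $d\ge 5$, but even in degree $4$, where the $K(\pi,1)$ property \emph{is} established, the injectivity step you call classical is open. So the proposal, while structurally aligned with the paper's suggested route, substitutes an assertion for the central open difficulty rather than resolving it; it does not constitute a proof.
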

The only real evidence I have in favor of this conjecture is that it is not
obviously falsified by the simplest cases I was able to fully analyze.

If $Y=H(Q,\sigma,e)$ is a Hurwitz variety, the preimage of $Q$ under 
$f \in Y$ is a finite subset of $\C$ whose cardinality is constant as a function of 
$f$, and therefore we obtain a monodromy map $M:\pi_1(Y) \to B_n$ for suitable $n$
depending on $Y$. If $Y$ is a Hurwitz variety that arises as a fiber of a
moduli space, the image of $\pi_1(Y) \to \pi_1(\SS_d) \to \hat{B}_3$ factors
through this $B_n$, so the monodromy conjecture implies that the maps $M$
are injective. In fact, at least in low dimensions, the monodromy conjecture
is {\em equivalent} to injectivity on these pieces, since both $\pi_1(\SS_d)$
and $\hat{B}_d$ are built up in understandable ways from these pieces 
(this is how Theorem~\ref{theorem:monodromy_3} is proved).

In any case, this is something we can test, since the groups $\pi_1(Y)$ and
$B_n$ are rather explicit, especially in low degree.

\begin{example}[Star of David]
The `hard' pieces in degree 4 are the Star of David and its generalizations 
as discussed in Theorem~\ref{theorem:Hurwitz_3_CAT_0}.

Recall the moduli space $Y_2$ from Example~\ref{example:moduli_star_of_david},
and the description of its fundamental group in 
Theorem~\ref{theorem:Hurwitz_3_CAT_0}. This fundamental group (let's call it $G$)
has a presentation
$$G:=\langle a,b,c,x,y,z \; | \; ab=bc=ca, xy=yz=zx, [a,x]=[b,y]=[c,z]=1 \rangle$$
The monodromy map to $B_6$ arises by thinking of the generators as the
edges of a Star of David in the plane, and taking each generator to the braid
that cycles the endpoints of the edge around each other in a narrow ellipse
contained in a neighborhood of the edge.

There is an isometric embedding from the $\CAT(0)$ complex for $G$ 
described in Theorem~\ref{theorem:Hurwitz_3_CAT_0} to the Brady--McCammond
complex for $B_6$, which has been shown to be $\CAT(0)$ by Haettel--Kielak--Schwer.
If the image is totally geodesic, this would imply that $G \to B_6$ is injective.
This seems quite plausible, but we have not checked it.
\end{example}

\section{Acknowledgments}

I would like to thank Laurent Bartholdi, Juliette Bavard, Pierre Deligne,
Laura DeMarco, Yan Mary He, Sarah Koch, Jeff Lagarias, Chris Leininger, Jon McCammond, 
Curt McMullen, Madhav Nori, Kevin Pilgrim, Eric Rains, Alden Walker, Henry Wilton
and the anonymous referee for their help. 
Most of what I know about polynomial dynamics (which is not
much) I learned from Sarah and from Curt at various points in time. 

I would also like to extend thanks to the students who attended the graduate topics course
I taught on this material at the University of Chicago in Winter 2019, and to
Sam Kim who solicited some talks and a paper for the celebration of the 25th 
anniversary of the founding of KIAS, and without whom I might have never been 
sufficiently motivated to write any of this up.


\begin{thebibliography}{99}
\bibitem{Blanchard_Devaney_Keen}
	P. Blanchard, R. L. Devaney and L. Keen, 
	\emph{The dynamics of complex polynomials and automorphisms of the shift},
	Invent. Math. {\bf 104} (1991), 545–-580
\bibitem{Blokh_et_al}
	A. Blokh, L. Oversteegen, R. Ptacek and V. Timorin,
	\emph{Laminational models for some spaces of polynomials of any degree},
	Mem. Amer. Math. Soc. {\bf 265} (2020), no. 1288
\bibitem{Bottcher}
	L. B\"ottcher,
	\emph{The principal laws of convergence of iterates and their application to analysis} (Russian), 
	Izv. Kazan. Fiz.-Mat. Obshch. 14 (1904), 137--152
\bibitem{BFK}
	M. Boyle, J. Franks and B. Kitchens,
	\emph{Automorphisms of the One-Sided Shift and Subshifts of Finite Type},
	Erg. Thy. Dyn. Sys. {\bf 10} (1990), 421--449
\bibitem{Brady_McCammond}
	T. Brady and J. McCammond,
	\emph{Braids, Posets and Orthoschemes},
	Algebr. Geom. Topol. {\bf 10} (2010), no. 4, 2277--2314
\bibitem{Branner}
	B. Branner,
	\emph{Cubic polynomials: turning around the connectedness locus},
	in L. Goldberg and A. Phillips, eds, Topological Methods in Modern Mathematics,
	pp. 391--427. Publish or Perish, 1993.
\bibitem{Branner_Hubbard_1}
	B. Branner and J. Hubbard,
	\emph{The iteration of cubic polynomials. I. The global topology of parameter
	space},
	Acta Math. {\bf 160} (1988), no. 3--4, 143--206
\bibitem{Branner_Hubbard_2}
	B. Branner and J. Hubbard,	
	\emph{The iteration of cubic polynomials. II. Patterns and parapatterns},
	Acta Math. {\bf 169} (1992), no. 3--4, 229--325
\bibitem{Bridson_Haefliger}
	M. Bridson and A. Haefliger,
	\emph{Metric spaces of non-positive curvature},
	Grund. der Math. Wiss., {\bf 319} Springer-Verlag, Berlin, 1999
\bibitem{Brown}
	K. Brown,
	\emph{Buildings},
	Springer--Verlag, Berlin, 1988
\bibitem{Calegari_planar}
	D. Calegari,
	\emph{Circular groups, planar groups, and the Euler class},
	Proceedings of the Casson Fest, 431--491.
	Geom. Topol. Monogr. {\bf 7}, Geom. Topol. Publ., Conventry, 2004
\bibitem{Calegari_combinatorics}
	D. Calegari,
	\emph{Combinatorics of the Tautological Lamination},
	preprint, to appear
\bibitem{Calegari_shifty}
	D. Calegari,
	{\tt shifty}, computer program; source available on request
\bibitem{Carvalho_Hall}
	A. de Carvalho and T. Hall,
	\emph{Riemann surfaces out of paper},
	Proc. Lond. Math. Soc. (3) {\bf 108} (2014), no. 3, 541-–574
\bibitem{Cerf}	
	J. Cerf,
	\emph{La stratification naturelle des espaces de fonctions diff\'erentiables r\'eelles 
	et le th\'eor\`eme de la pseudo-isotopie}, Inst. Haut. \'Etudes Sci. Publ. Math.,
	{\bf 39} (1970) 51--73
\bibitem{Corson}
	J. Corson,
	\emph{Complexes of groups},
	Proc. Lond. Math. Soc. (3) {\bf 65} (1) (1992), 199--224
\bibitem{Coxeter}
	H. Coxeter,
	\emph{Regular Polytopes},
	Third edition. Dover Publications, Inc., New York, 1973.
\bibitem{Douady_Hubbard_1}
	A. Douady and J. Hubbard,
	\emph{It\'eration des polyn\^omes quadratiques complexes},
	C. R. Acad. Sci. Paris S\'er. I Math. {\bf 294} (1982), no. 3, 123–-126
\bibitem{deMarco_thesis}
	L. DeMarco,
	\emph{Dynamics of rational maps: a current on the bifurcation locus},
	Math. Res. Lett. {\bf 8} (2001), no. 1-2, 57--66
\bibitem{deMarco}
	L. DeMarco,
	\emph{Combinatorics and topology of the shift locus},
	Conformal dynamics and hyperbolic geometry, 35--48, 
	Contemp. Math., 573, Amer. Math. Soc., Providence, RI, 2012. 
\bibitem{deMarco_McMullen}
	L. DeMarco and C. McMullen,
	\emph{Trees and the dynamics of polynomials},
	Ann. Sci. \'Ec. Norm. Sup\'er. (4) {\bf 41} (2008), no. 3, 337–-382 
\bibitem{deMarco_Pilgrim}
	L. DeMarco and K. Pilgrim,
	\emph{The classification of polynomial basins of infinity},
	Ann. Sci. \'Ec. Norm. Sup\'er. (4) 50 (2017), no. 4, 799–-877. 
\bibitem{Goldberg_Keen}
	L. Goldberg and L. Keen,
	\emph{The mapping class group of a generic quadratic rational map and 
	automorphisms of the 2-shift},
	Invent. Math. {\bf 101} (1990), no. 2, 335--372
\bibitem{Haettel_Kielak_Schwer}
	T. Haettel, D. Kielak, P. Schwer,
	\emph{The 6-strand braid group is $\CAT(0)$},
	Geom. Dedicata {\bf 182} (2016), 263--286
\bibitem{McMullen}
	C. McMullen,
	\emph{Braiding of the attractor and the failure of iterative algorithms},
	Invent. Math. {\bf 91} (1988), no. 2, 259--272
\bibitem{Milnor_CD}
	J. Milnor,
	\emph{Dynamics in one complex variable},
	Third edition. Ann. of Math. Stud., 160. Princeton University Press, Princeton, NJ, 2006. 
\bibitem{Thurston_notes}
	W. Thurston,
	\emph{Thurston's Notes},
	available online from MSRI \url{http://library.msri.org/books/gt3m/}
\bibitem{Thurston_dynamics}
	W. Thurston,
	\emph{On the geometry and dynamics of iterated rational maps}, 
	Edited by Dierk Schleicher and Nikita Selinger and with an appendix by 
	Schleicher. Complex dynamics, 3-–137, A. K. Peters, Wellesley, MA, 2009.
\bibitem{Thurston_laminations}
	W. Thurston, H. Baik, Y. Gao, J. Hubbard, T. Lei, K. Lindsey and D. Thurston,
	\emph{Degree $d$ invariant laminations},
	What's next? the mathematical legacy of William P. Thurston, 259--325,
	Ann. of Math. Stud., 205, Princeton Univ. Press, Princeton, NJ, 2020.
\bibitem{Vlamis}
	N. Vlamis,
	\emph{Big mapping class groups: an overview},
	In the tradition of Thurston, 459--496, Springer, Cham, 2020.
\end{thebibliography}
\end{document}